\newtheorem{proposition}{Proposition}[section]
\newtheorem{theorem}[proposition]{Theorem}
\newtheorem{corollary}[proposition]{Corollary}
\newtheorem{lemma}[proposition]{Lemma}
\newtheorem{assumptions}[proposition]{Assumptions}
\theoremstyle{definition}
\newtheorem{definition}[proposition]{Definition}
\theoremstyle{remark}
\newtheorem{remark}[proposition]{Remark}
\numberwithin{equation}{section}
\newcommand{\eps}{\varepsilon}
\newcommand{\N}{{\mathbb{N}}}
\newcommand{\R}{{\mathbb{R}}}
\DeclareMathOperator{\dist}{dist}
\DeclareMathOperator{\diverg}{div}
\title{On the boundary branching set of the one-phase problem}
\author{Lorenzo Ferreri, Luca Spolaor, Bozhidar Velichkov}
\begin{document}
\maketitle

\begin{abstract}
    We consider minimizers of the one-phase Bernoulli free boundary problem in domains with analytic fixed boundary. In any dimension $d$, we prove that the branching set at the boundary has Hausdorff dimension at most $d-2$. As a consequence, we also obtain an analogous estimate on the branching set for solutions to the two-phase problem under an analytic separation condition. Moreover, as a byproduct of our analysis we obtain strong boundary unique continuation results for quasilinear operators and thin-obstacle variational inequalities. The approach we use is based on the (almost-)monotonicity of a boundary Almgren-type frequency function, obtained via regularity estimates and a Calder\'on-Zygmund decomposition in the spirit of Almgren-De Lellis-Spadaro. 
\end{abstract} 
\noindent
{\footnotesize \textbf{AMS-Subject Classification}}. 
{\footnotesize 35R35, 	
35J57.   
}\\
{\footnotesize \textbf{Keywords}}. 
{\footnotesize Free boundary, branch points, unique continuation, nonlinear thin-obstacle problem,  Almgren's frequency function}

\tableofcontents

\section{Introduction}
Let $\mathcal A$ be a bounded open set in $\R^d$, $d\ge 2$, and $u_0:\partial \mathcal{A}\to\R$ be a given nonnegative function. The one-phase Bernoulli problem consists in minimizing the functional 
\begin{equation}\label{eqn:onePhaseEnergyU-A}
J_1(u,\mathcal A) \coloneqq \int_{\mathcal A} \vert \nabla u \vert^2\,dx + \vert 
\Omega_u \cap \mathcal A \vert \quad 
\end{equation}
among all nonnegative functions $u:\overline {\mathcal A}\to\R$ that agree with $u_0$ at $\partial \mathcal A$, where for any function $u\in H^1(\mathcal A)$ we denote by $\Omega_u$ its positivity set 
$$\Omega_u:=\{u>0\}\cap \mathcal A.$$ 
It is well-known, due to the works \cite{AltCaffarelli:OnePhaseFreeBd, DeSilva:FreeBdRegularityOnePhase, CaffarelliJerisonKenig04:NoSingularCones3D, JerisonSavin15:NoSingularCones4D, DeSilvaJerison09:SingularConesIn7D, Weiss99:PartialRegularityFreeBd, KinderlehrerNirenberg1977:AnalyticFreeBd}, that the optimal regularity for minimizers $u$ of \eqref{eqn:onePhaseEnergyU-A} is Lipschitz and that the free boundary $\partial\Omega_u\cap \mathcal A$ inside $\mathcal A$ is analytic up to a closed singular set of Hausdorff dimension at most $d-5$. The behavior of the free boundary $\partial \Omega_u\cap \mathcal A$ and the solution $u$ up to the boundary of $\partial \mathcal A$ was first studied in \cite{ChangLaraSavin:BoundaryRegularityOnePhase} around points $z_0\in\partial A$ at which the boundary datum $u_0$ vanishes identically:
$$u_0\equiv 0\quad\text{in}\quad B_r(z_0)\cap \mathcal A\,,$$
for some $r>0$. Without loss of generality, we can take $B_r(z_0)=B_1$. It is immediate to check that if $u_0\equiv 0$ in $\partial \mathcal A\cap B_1$, then the minimizer $u$ (trivially extended in $B_1\setminus \mathcal A$) is a solution to the variational problem
\begin{equation}\label{e:intro-onePhase-variational-problem}
\min\Big\{J_1(v,B_1)\ :\ v \ge0,\ v=u\ \text{on}\ \partial B_1,\ \{v>0\}\subset \mathcal A\Big\}.
\end{equation}
In \cite{ChangLaraSavin:BoundaryRegularityOnePhase} it was shown that if $u$ is a solution to the above variational problem and $\partial \mathcal A$ is $C^{1,\sfrac12}$ smooth in a neighborhood of $z_0$, then the free boundary $\partial\Omega_u \cap \mathcal{A}$ is $C^{1,\sfrac12}$ regular in a neighborhood of any point on $\partial\Omega_u\cap\partial\mathcal A$; this regularity is also optimal in the sense that even for analytic $\partial\mathcal A$, there are solutions $u$ whose boundary $\partial\Omega_u$ is no more than $C^{1,\sfrac12}$ (see \cite{DePhilippisSpolaorVeluchkov2021:QuasiConformal2D, DaivdEngelsteinToroSmitVega2023:BranchPointsAlmostTwoPhase}). Moreover, any minimizer $u$ is a (classical) solution to the problem 
\begin{equation}\label{eqn:onePhase-equation}
\begin{cases}
\begin{array}{rcl}
\Delta u=0&\quad\text{in}\quad &\Omega_u\cap\mathcal A\\
u=0&\quad\text{on}\quad &\partial\Omega_u\\
|\nabla u|=1&\quad\text{on}\quad &\partial\Omega_u\cap \mathcal A\\
|\nabla u|\ge 1&\quad\text{on}\quad &\partial\Omega_u\cap \partial\mathcal A\,,
\end{array}
\end{cases}
\end{equation}
in a neighborhood of the contact set $B_1\cap\partial\Omega_u\cap\partial\mathcal A$. \medskip

The points where the two boundaries $\partial\Omega_u$ meet 
 $\partial\mathcal A$ are the so called {\it points of branching}:
\begin{equation}\label{e:intro-set-of-branching-points}
\mathcal B_1(u):=\Big\{x\in B_1\cap \partial\Omega_u\cap\partial\mathcal A\ :\ \left|B_r(x)\cap  \left(\mathcal A\setminus \Omega_u\right)\right|\neq 0\ \text{ for every }\ r>0\Big\}.
\end{equation}
The $C^{1,\sfrac12}$ regularity of $\partial\mathcal A$, $\partial\Omega_u$ and $u$ provides that at the points of branching the two boundaries $\partial\Omega_u$ and $\partial\mathcal A$ are tangent and the minimizer $u$ satisfies
 $$u=0\quad\text{and}\quad |\nabla u|=1\quad\text{at}\quad \mathcal B_1(u),$$
so at first order these points are indistinguishable from the interior points $\partial\Omega_u\cap\mathcal A$. Thus, the ($C^{1,\sfrac12}$-)regularity of $\partial\Omega_u$ and $\partial\mathcal A$ by itself does not provide any a priori information on the contact set $\partial\Omega_u\cap\partial\mathcal A$ and the way $\partial\Omega_u$ is approaching $\partial\mathcal A$.
\medskip

This branching behavior is a common issue in the free boundary regularity theory and in geometric analysis; it appears naturally at points at which the first order blow-up analysis of the solutions does not provide any geometric information about the set of points itself. 

Determining the fine structure of these branching points is not an easy task even in dimension $d=2$ and only few results are available in this direction:
we refer to \cite{Lewy1972:CoincidenceSetVariationalInequalitites} for the case of the thin-obstacle problem, \cite{Sakai1991:RegularityBoundarySchwarzFunction, Sakai1993:RegularityFreeBoundaries2D} for the case of the obstacle problem, and to \cite{DeLellisSpadaroSpolaor2018:2DMinialCurrentsILipschitz, DeLellisSpadaroSpolaor2017:2DMinialCurrentsIICenterManifold, DeLellisSpadaroSpolaor2017:2DMinialCurrentsIIIBlowUp, Chang1988:AreaMinimizingCurrents2D,DLboundary} for similar results in the framework of area-minimizing currents.
Concerning the Bernoulli problem, in \cite{DePhilippisSpolaorVeluchkov2021:QuasiConformal2D, FerreriSpolaorVeluchkov2024:FineStructureThinTwoMembrane2D} it was recently proved that, in dimension $d=2$ and for analytic $\mathcal A$, the set of branching points is locally finite; the results in \cite{DePhilippisSpolaorVeluchkov2021:QuasiConformal2D, FerreriSpolaorVeluchkov2024:FineStructureThinTwoMembrane2D} being obtained through a quasi-conformal map argument.   

The analysis of the branching sets in higher dimension is strongly related to a unique continuation type problem, and therefore to Almgren's frequency function. In the framework of area-minimizing currents, the upper bound on the dimension of the branching set in the interior is known thanks to the monumental work of Almgren \cite{Almgren2000:AlmgrensBigRegularityPaper} and the subsequent works of De Lellis and Spadaro \cite{DeLellisSpadaro2016:AreaMinimizingCurrents1LpEstimates, DeLellisSpadaro2016:AreaMinimizingCurrents2CenterManifold, DeLellisSpadaro2016:AreaMinimizingCurrents3BlowUp}, while the branching behavior at the boundary, for analytic boundaries, is currently an open problem (see \cite{White}). For the linear thin-obstacle problem, where the Almgren's frequency function is well-known to hold (see \cite{AthanasopoulosCaffarelli:SignoriniPb}), a $(d-2)$-rectifiability result was recently obtained by Focardi and Spadaro in \cite{FocardiSpadaro2018:MeasureFreeBdThinObstacle, FocardiSpadaro2018:MeasureFreeBdThinObstacleCorrection}. For the obstacle problem in dimension $d>2$ it is known, thanks to the works of Caffarelli \cite{Caffarelli1998:ObstacleProblemRevisited} and Monneau \cite{Monneau2003:ObstacleProblem2D} (see also \cite{ColomboSpolaorVelichkov2018:EpiperimatricObstacleProblem,FigalliSerra2019:FineStructureObstacleProblem}), that the set of singular points (which exhibits the same type of branching behavior) is contained in a $C^1$ manifold of dimension $(d-1)$. While for generic free boundaries finer results were obtained by Figalli, Ros-Oton and Serra \cite{FigalliRos-OtonSerra2020:GenericRegularityObstacle}, the optimal dimension of the branching set is still open. \medskip

The aim of the present paper is to give a more precise description of the set of branching points \eqref{e:intro-set-of-branching-points} in any dimension $d\ge 2$ for the one-phase Bernoulli problem at the boundary. We extend the results from \cite{DePhilippisSpolaorVeluchkov2021:QuasiConformal2D, FerreriSpolaorVeluchkov2024:FineStructureThinTwoMembrane2D} to any dimension by showing that, when $\partial\mathcal A$ is a $(d-1)$-dimensional analytic manifold, the set of branching points $\mathcal B_1(u)$ has Hausdorff dimension at most $(d-2)$. We notice that the analyticity of $\partial\mathcal A$ is fundamental for this result. Indeed, one can easily produce examples of wildly behaving contact sets by  taking a half-plane solution $u=(x_d)^+$ and then constructing a set $\mathcal A$ with $C^\infty$ boundary $\partial\mathcal A$ touching   $\partial\Omega_u=\{x_d=0\}$ on an arbitrary closed set. \medskip

In what follows, we assume that the origin is a branching point, $0\in\mathcal B_1(u)$. We will denote by $x'$ the points in $\R^{d-1}$, so that $B_1':=B_1\cap (\R^{d-1}\times\{0\})$, and we assume that the boundary of $\mathcal A$ is the graph of an analytic function 
$\phi:B_1'\to\R\,$, precisely:
\[
\mathcal{A} \coloneqq \{(x',x_d)\in B_1\ :\ x_d>\phi(x')\}\,.
\]
Thanks to the $C^{1,\sfrac12}$ regularity of $\partial\Omega_u$ we may assume that there is a $C^{1,\sfrac12}$ function
	$$f:B_1'\to\R\ ,\quad f\ge\phi\ \text{ on }\  B_1',$$
	such that, up to a rotation and translation of the coordinate system, we have 
	\begin{equation}\label{eq:fbgraph}
	\begin{cases}
	\begin{array}{rcl}
	u(x)>0&\quad\text{for}\quad&x\in (x',x_d)\in B_1 \quad\text{such that}\quad x_d>f(x');\\
	u(x)=0&\quad\text{for}\quad&x\in (x',x_d)\in B_1 \quad\text{such that}\quad  x_d\le f(x').
	\end{array}
	\end{cases}
	\end{equation}	
In terms of the functions $f$ and $\phi$ the contact set of the two boundaries $\partial\Omega_u$ and $\partial\mathcal A$ reads as 
\begin{equation}\label{e:def-C-one-phase}
\mathcal C_1(u):=B_1\cap\partial \Omega_u\cap\partial\mathcal A=\{(x',x_d)\in B_1\ :\ x_d=\phi(x')=f(x')\}\,.
\end{equation}
We also introduce the set of points
\begin{equation}\label{e:def-S-one-phase}
\mathcal S_{1}(u):=\big\{x\in \mathcal C_1(u)\,:\,|\nabla u|(x)=1\big\},
\end{equation}
which is a closed subset of $\{x_d=\phi(x')\}$ and contains all points of branching, that is:
$$\mathcal B_1(u)\subset \mathcal S_1(u).$$
The following is the main result of the paper.
    \begin{theorem}[Dimension of the boundary branching set]\label{thm:onePhaseAnalyticObstacle} Let $B_1\subset \R^d$ and let $u\in H^1(B_1)$ be a solution of \eqref{e:intro-onePhase-variational-problem}. Suppose, moreover, that the function $\phi$ describing $\partial\mathcal A$ is analytic. Then, either
    \[
    \partial \Omega_u \cap B_1 \equiv \mathrm{graph}(\phi) \cap B_1
    \]
    or
    \[
    \dim_{\mathcal H}(\mathcal S_1(u))\leq d-2.
    \]
    Moreover, in the second case, if $d=2$ then $\mathcal S_1(u)$ is locally finite.
    \end{theorem}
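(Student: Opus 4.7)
The plan is to use the analyticity of $\phi$ to reduce to a nonlinear thin-obstacle problem in a half-ball, to which a boundary Almgren-type frequency formula can be applied. First, I flatten the fixed boundary by the analytic diffeomorphism $y = (x', x_d - \phi(x'))$, which sends $\mathcal A$ onto $B_1 \cap \{y_d > 0\}$ and transforms $u$ into a function $\tilde u$ on the upper half-ball, satisfying a quasilinear elliptic equation $\diverg(A(y)\nabla \tilde u) = 0$ in $\{\tilde u > 0\}$ with analytic coefficients $A$, $A(0) = \mathrm{Id}$, together with a transformed Bernoulli condition on $\partial\{\tilde u > 0\} \cap \{y_d > 0\}$ (equality) and on $\partial\{\tilde u > 0\} \cap \{y_d = 0\}$ (inequality). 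Every branching point corresponds to a point on $\{y_d = 0\}$ where $\tilde u = 0$ and $\partial_{y_d}\tilde u = 1$, whose natural linear model is $P(y) := y_d$. I set $w := \tilde u - y_d$; then $w$ and $\nabla w$ vanish at every branching point, $w$ solves a linearized equation with analytic coefficients and a forcing vanishing at the origin, and the transformed one-sided condition on $\{y_d = 0\}$ becomes a Signorini-type complementarity condition on $w$.

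Next, I would set up a boundary Almgren frequency
\begin{equation*}
N(r,w) := \frac{r D(r)}{H(r)}, \quad D(r) := \int_{B_r \cap \{y_d>0\}} |\nabla w|^2\,dy, \quad H(r) := \int_{\partial B_r \cap \{y_d>0\}} w^2\,d\Hcal^{d-1},
\end{equation*}
and prove that $r \mapsto N(r,w) + C r^\alpha$ is non-decreasing for some $\alpha > 0$. Computing $\frac{d}{dr}\log H$ and $\frac{d}{dr}\log D$ produces two kinds of error terms: (i) bulk terms from $A - \mathrm{Id}$ and the analytic forcing, which are polynomial in $r$ and harmless; (ii) boundary contributions on $\{y_d=0\} \cap B_r$ coming from the quasilinear thin-obstacle complementarity. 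Terms in (ii) have unfavorable sign and are not directly estimable by integration by parts; following Almgren and De Lellis--Spadaro, I would control them through a Calder\'on--Zygmund stopping-time decomposition of the coincidence set on $\{y_d = 0\}$, combined with the $C^{1,\sfrac12}$ regularity of $u$, in order to reabsorb the bad contributions into $H$ up to an acceptable polynomial loss.

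From almost-monotonicity one obtains the limit $N(0^+, w) \in [\underline N, +\infty]$ at every branching point, with $\underline N > 1$ since $y_d$ itself has frequency $1$ and $w$ vanishes to strictly higher order. The dichotomy of the theorem then emerges as follows: if $N(0^+, w) = +\infty$ at some point of $\mathcal S_1(u)$, a strong boundary unique-continuation statement for the linearized Signorini problem (a byproduct of the frequency analysis, as advertised in the abstract) forces $w \equiv 0$ in a neighborhood, which by analytic propagation gives $\tilde u \equiv y_d$ and hence $\partial \Omega_u \equiv \mathrm{graph}(\phi)$ in $B_1$. Otherwise $N(0^+, w) < +\infty$ on all of $\mathcal S_1(u)$, and rescalings $w_r(y) := w(ry)/\sqrt{H(r)/r^{d-1}}$ converge along subsequences to nonzero homogeneous solutions of the linear Signorini problem on $\R^d_+$. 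A Federer-type dimension reduction, stratifying $\mathcal S_1(u)$ by the invariance dimension of these blow-ups and observing that a blow-up invariant in $d - 1$ directions would be proportional to $y_d$ and hence not correspond to a true branching point, yields $\dim_\Hcal \mathcal S_1(u) \le d - 2$. In $d = 2$ the resulting dimension is $0$; the discreteness of admissible homogeneities together with upper semicontinuity of $N(0^+, \cdot)$ forces branching points to be isolated, hence locally finite.

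The hard part will be step (ii) above: establishing almost-monotonicity of the boundary frequency in the presence of the nonlinear thin-obstacle contribution on $\{y_d = 0\}$. Classical boundary frequency arguments (as for the linear Signorini problem) rely on linearity of the obstacle, whereas here the complementarity condition inherited from the Bernoulli law $|\nabla u|\ge 1$ is genuinely quasilinear. The Calder\'on--Zygmund stopping-time decomposition and its coupling with fine regularity estimates are what should make the almost-monotonicity go through, and this is also where the analyticity of $\phi$ would enter the argument beyond the mere flattening step.
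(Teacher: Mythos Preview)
Your overall architecture—reduce to a thin-obstacle-type problem, prove almost-monotonicity of a boundary Almgren frequency via a Calder\'on--Zygmund decomposition, then blow up and reduce dimension—matches the paper's, and you correctly flag the frequency monotonicity as the hard step. The gap is in your first reduction.

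Flattening $\partial\mathcal A$ and subtracting $y_d$ does \emph{not} give a thin-obstacle problem on $\{y_d=0\}$. After your change of variables one has $\tilde u\equiv 0$ on $\{y_d=0\}$ (since $u$ vanishes on $\partial\mathcal A$), so $w=\tilde u-y_d$ satisfies the \emph{Dirichlet} condition $w=0$ there, not a Signorini one. The free boundary $\partial\Omega_{\tilde u}$ remains the graph of some $\tilde f\ge 0$ sitting \emph{inside} $\{y_d>0\}$; on the non-contact set the Bernoulli condition lives on this interior graph, not on $\{y_d=0\}$, and where $\tilde f>0$ one has $\tilde u\equiv 0$ in a full neighbourhood of $\{y_d=0\}$, so $\partial_d w=-1$ there. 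No Signorini-type complementarity emerges, and the PDE for $w$ still carries a genuine free boundary across $\{y_d=\tilde f(y')\}$.

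What the paper does instead is precisely the missing idea. Using Cauchy--Kovalevskaya, one constructs an \emph{analytic one-phase solution} $m$ with $\Omega_m=\mathcal A$ (so $m=0$, $|\nabla m|=1$ on $\partial\mathcal A$). One then builds a diffeomorphism $\Phi$ along the gradient flow of $m$ so that $m\circ\Phi(x',t)=t$, and applies a \emph{hodograph transform to $u\circ\Phi$}. This sends the \emph{free boundary of $u$}—not $\partial\mathcal A$—to $\{x_d=0\}$, and the constraint $\Omega_u\subset\mathcal A$ becomes the genuine sign condition $w\ge 0$ on $\{x_d=0\}$. Equally important, this $m$-hodograph is engineered so that the resulting functional for $w$ has \emph{no linear terms in $w$ or $\nabla w$}: the errors start at cubic order in $(w,\nabla w)$ plus a controllable Robin-type boundary term. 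Your naive subtraction would produce first-order gradient errors (as the paper remarks already for the simpler difference of two hodographs), and since $u$ is only $C^{1,1/2}$—this regularity being sharp—the coefficient field $x\mapsto A(x,w,\nabla w)$ would be merely $C^{1/2}$, below the Lipschitz threshold needed for Garofalo--Lin-type monotonicity. The analyticity of $\phi$ enters essentially through the construction of $m$, not as a better flattening. Once the $m$-hodograph is in place, your remaining steps (the Whitney-type decomposition to control cubic errors, almost-monotonicity, blow-up to homogeneous Signorini solutions, Federer reduction) are indeed the ones the paper carries out.
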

Differently to \cite{DePhilippisSpolaorVeluchkov2021:QuasiConformal2D} and \cite{FerreriSpolaorVeluchkov2024:FineStructureThinTwoMembrane2D} where the arguments rely on quasi-conformal maps techniques, the present paper uses an approach more similar to \cite{Almgren2000:AlmgrensBigRegularityPaper, DeLellisSpadaro2016:AreaMinimizingCurrents1LpEstimates, DeLellisSpadaro2016:AreaMinimizingCurrents2CenterManifold, DeLellisSpadaro2016:AreaMinimizingCurrents3BlowUp}
and \cite{Lin1991:NodalSetsEllipticParabolicPDEs, AdolfssonVilhelmEscurianza1997:C1aBoundaryUniquecontinuation, KukavicaNystrom1998:UniqueContinuationDiniBoundary}.
\begin{remark}
We notice that the estimate of Theorem \ref{thm:onePhaseAnalyticObstacle} is optimal in all dimensions. Indeed, in dimension two, examples of solutions with isolated branching points were constructed in \cite{DePhilippisSpolaorVeluchkov2021:QuasiConformal2D}. These 2D solutions can be then used to build sharp examples in any dimension $d$, by extending them to functions invariant with respect to the remaining $d-2$ variables. The extensions obtained this way are still solutions to \eqref{eqn:onePhase-equation} and minimizers to \eqref{e:intro-onePhase-variational-problem} (see \cite[Appendix A]{DePhilippisSpolaorVeluchkov2021:QuasiConformal2D}). 
\end{remark}
As a consequence of our analysis, we also obtain three results of independent interest:
\begin{enumerate}
    \item a boundary unique continuation result for quasilinear elliptic operators (\cref{thm:boundaryuniquecontinuation} in \cref{subsection:unique-continuation});

    \item an estimate on the dimension of the free boundary in quasilinear thin-obstacle problems (\cref{thm:boundaryuniquecontinuation2} in \cref{subsection:unique-continuation});

    \item an estimate on the dimension of the set of branching points in the two-phase problem, under the assumption that there exists an analytic manifold lying between the two free boundaries (\cref{theorem:twoPhaseSymmetricBranchingEst} in \cref{section:BranchinSetTwoPhaseAnalyticSeparation}). 
\end{enumerate}

\subsection{Main ideas and sketch of the proof}
In this subsection we describe the main ideas of the proof, discussing what are the major difficulties and new ideas to overcome them, compared to previous work. The proof of \cref{thm:onePhaseAnalyticObstacle} is essentially divided in the following $3$ steps.

\medskip

\noindent\textbf{Step 1. An $m$-hodograph transform.} In the first step we use the analyticity of $\phi$ to construct in a neighborhood of a singular branching point a solution to the one-phase Bernoulli problem $m$ with $\Omega_m=\mathcal A$. We then use $m$ to construct a new change of coordinates $\Phi$ (see \cref{prop:changeofcoord}) which turns the problem into a boundary unique continuation type problem for a quasilinear operator in $B_1^+$ with Robin boundary conditions on $B_1'$ (see \cref{thm:thinobs}).

One way to compare the free boundaries $\partial\Omega_u$ and $\partial\Omega_m$ is to apply an hodograph transform to $u$ and $m$ separately, the transformed functions $h_u$ and $h_m$ being simply the parametrizations of the graphs of $u$ and $m$ in the direction $e_d$. Then each of the linearized functions $h_u-x_d$ and $h_m-x_d$ is a local minimizer of the functional 
$$\varphi\mapsto \int_{B_1^+}\frac{|\nabla \varphi|^2}{1+\partial_d\varphi}.$$
It is immediate to check that the resulting PDE for the difference $h_u-h_m$ contains terms of any order (including linear terms in the gradient) and cannot be written as an autonomous nonlinear equation for $h_u-h_m$. To overcome this difficulty, we introduce a new change of coordinates $\Phi$ which is the key tool in this section. Precisely, we construct $\Phi$ through a gradient flow, which maps the level sets of $m$ into those of the function $x_d$.

We combine the change of coordinates $\Phi$ with the hodograph transforms of $u\circ\Phi$ and $m\circ\Phi$, thus obtaining an hodograph-type transform relative to $m$, which we call $m$-hodograph transform. Since $m\circ\Phi=x_d$ and since $m$ is a solution of the one-phase Bernoulli problem, we are able to obtain that the difference function $w$ (coinciding with the linearization of the transform of $u\circ\Phi$) minimizes a functional of the form
\begin{equation}\label{e:intro-functional-w}
w\mapsto \int_{B^+_r} \Big(M(x) \nabla w\cdot \nabla w + \partial_d Q(x)\,w\,\partial_dw+e(x,w,\nabla w)\Big),
\end{equation}
where the error term $e$ is analytic in $w$ and $\nabla w$ and contains only terms of order $\ge3$ (for the precise expression of the above functional see \cref{thm:thinobs}). In particular, $w$ is a solution to a quasi-linear thin-obstacle problem with Robin boundary conditions, whose energy does not have linear terms in the gradient $\nabla w$ or the function $w$. 

The geometric meaning of the above $m$-hodograph transform is hidden in the fact that it allows to compare the level sets of the original solution $u$ and the one-phase extension $m$, instead of comparing the functions (or their classical hodograph transforms) themselves. 

In the framework of the works of Almgren \cite{Almgren2000:AlmgrensBigRegularityPaper} and De Lellis-Spadaro \cite{DeLellisSpadaro2016:AreaMinimizingCurrents1LpEstimates, DeLellisSpadaro2016:AreaMinimizingCurrents2CenterManifold, DeLellisSpadaro2016:AreaMinimizingCurrents3BlowUp} on area-minimizing currents, the $m$-hodograph transform is reminiscent of a reparametrization over the center manifold. While the construction of the center manifold in \cite{Almgren2000:AlmgrensBigRegularityPaper, DeLellisSpadaro2016:AreaMinimizingCurrents2CenterManifold} is more complicated than in our setting, the normal reparametrization over the center manifold is natural and doesn't require further change of coordinates. In our case however, how to compare the functions $u$ and $m$ is not clear a priori, since $u$ satisfies both an interior and a boundary equation, which are not invariant under geometric reparametrizations.

\medskip

\noindent \textbf{Step 2. (Almost-)monotonicity of the frequency function.} We now wish to show that minimizers $w$ of the functional from \eqref{e:intro-functional-w} have almost-monotone frequency function (see \cref{thm:freqmon} below). We remark that the functional under consideration does not fall in any of the existing interior or boundary unique continuation type theorems (for instance \cite{GarofaloLin1987:UniqueContinuationFrequency, AdolfssonVilhelmEscurianza1997:C1aBoundaryUniquecontinuation, Lin1991:NodalSetsEllipticParabolicPDEs, KukavicaNystrom1998:UniqueContinuationDiniBoundary}). In fact, on one side, the function $w$ is a solution of an elliptic problem of the form
$${\rm div}(A(x,w,\nabla w)\nabla w)=f(x,w,\nabla w),$$
where the matrix field $A$ is analytic in $w$ and $\nabla w$ and contains terms of order 1 in $\nabla w$. On the other hand, the optimal regularity of $u$ (and consequently of $w$) is known to be exactly $C^{1,\sfrac12}$ (see \cite{ChangLaraSavin:BoundaryRegularityOnePhase,DePhilippisSpolaorVeluchkov2021:QuasiConformal2D}). This means that the matrix field 
$$x\mapsto A\left(x,w(x),\nabla w(x) \right)$$
is no more that $\sfrac12$-H\"older continuous, so we cannot apply the theory of Garofalo and Lin \cite{GarofaloLin1987:UniqueContinuationFrequency}, which requires Lipschitz regular matrix fields.\medskip

\noindent To overcome these difficulties, we proceed with the following main steps:
\begin{enumerate}
    \item[(i)] a $C^{1,\alpha}$ regularity result with estimates for minimizers $w$ of \eqref{e:intro-functional-w} (\cref{prop:nonlinearThinObstacleC1aRegularity});
    \item[(ii)] unique continuation type lemmas for minimizers of \eqref{e:intro-functional-w} under doubling assumptions (\cref{lemma:excessOnePhaseThreeAnnuliLemma} and \cref{lemma:heightOnePhaseThreeAnnuliLemma});
    \item[(iii)] a Taylor expansion of inner and outer variations (\cref{lem:freqidentities});
    \item[(iv)] estimates of the errors in the Taylor expansions from (iii), obtained combining the $C^{1,\alpha}$ regularity from (i) and the unique continuation lemmas from (ii), with a Calder\'on-Zygmund type decomposition similar to the one of Almgren-De Lellis-Spadaro \cite{Almgren2000:AlmgrensBigRegularityPaper, DeLellisSpadaro2016:AreaMinimizingCurrents1LpEstimates, DeLellisSpadaro2016:AreaMinimizingCurrents2CenterManifold, DeLellisSpadaro2016:AreaMinimizingCurrents3BlowUp} constructed in such a way that the doubling conditions required in point (ii) are satisfied.
\end{enumerate}
Notice that, while the underlying strategy is reminiscent of \cite{Almgren2000:AlmgrensBigRegularityPaper, DeLellisSpadaro2016:AreaMinimizingCurrents1LpEstimates, DeLellisSpadaro2016:AreaMinimizingCurrents2CenterManifold, DeLellisSpadaro2016:AreaMinimizingCurrents3BlowUp}, the implementation in our setting is different.  
Indeed, in the works of Almgren-De Lellis-Spadaro the monotonicity of Almgren's frequency function are obtained as a combination of a Calder\'on-Zygmund type decomposition and a construction of a center manifold, which are intrinsically interconnected and cannot be treated independently. 
However, in our case, the construction of the $m$-hodograph transform from Step 1 and the Calder\'on-Zygmund decomposition from point (iv) are treated independently. We also notice that in \cite{Almgren2000:AlmgrensBigRegularityPaper, DeLellisSpadaro2016:AreaMinimizingCurrents1LpEstimates, DeLellisSpadaro2016:AreaMinimizingCurrents2CenterManifold, DeLellisSpadaro2016:AreaMinimizingCurrents3BlowUp} point (i) of the startegy above is replaced by a Lipschitz approximation argument. Moreover, in the error terms of our frequency function there are two main differences with respect to Almgren-De Lellis-Spadaro; these different error terms are due to the nature of our functional and do not allow us to implement directly their construction. 

\begin{itemize}
\item {\it Third order gradient terms.} The presence of a third order gradient term in the functional \eqref{e:intro-functional-w} is a natural consequence of hodograph-type transformations. In fact, the only functional which does not produce third order gradient terms after hodograph transformations is the area functional. 

\item {\it Boundary terms.} Our frequency function is a boundary frequency function, in the sense that both the energy \eqref{eqn:onePhaseEnergyDef} and the errors (see \cref{lem:freqidentities}) contain terms which are due to the boundary condition and ultimately to the presence of the free boundary.  
\end{itemize}

\medskip

\noindent \textbf{Step 3. Blow-up.} Finally, the estimate on the Hausdorff dimension of the branching set is obtained by contradiction via a frequency blow-up, as a consequence of the analogous estimate for the harmonic thin-obstacle problem. The main ingredients are the almost-monotonicity formula (\cref{thm:freqmon}) for the frequency function from Step 2, and a dimension reduction lemma adapted to our problem (\cref{lem:dimensionreduction}).

\subsection{Unique continuation for quasilinear elliptic operators and variational inequalities}\label{subsection:unique-continuation}

As a byproduct of our analysis, we obtain unique continuation results for a class of quasilinear operators. In particular, \cref{thm:boundaryuniquecontinuation} and \cref{thm:boundaryuniquecontinuation2} below are an immediate consequence of the results from  \cref{section:L2LinftyEstimates}, \cref{section:OuterAndInnerVariations}, \cref{section:ErrorEstimates}, and \cref{section:FrequencyMonotonicity}. An analogous argument would apply to a larger class of operators, but for the sake of simplicity we chose to present it only for the operator associated to the functional from \eqref{e:intro-functional-w}. In fact, using the theory developed in the present paper, in the forthcoming \cite{FerreriSpolaorVelichov2024:UniqueContinuationNonlinearVariational} we are able to prove weak and strong unique continuation results for a general class of nonlinear elliptic problems, which are not covered by the existing unique continuation literature. 
\begin{theorem}[Boundary unique continuation]\label{thm:boundaryuniquecontinuation}
    Let $w\in C^{1,\sfrac12}(\overline{B_1^+})\cap W^{1,2}(B_1^+)$ be a minimizer of the functional $\mathcal F+\mathcal E$, defined in \cref{thm:thinobs}, among all functions with prescribed boundary datum on $(\partial B_1)^+$ that vanish on $B_1'$. Then either $w\equiv 0$ or
    \[
    \dim(\{x=(x',0)\,:\,w(x)=0\,\text{ and }\,|\nabla w|(x)=0 \}\cap B_{\sfrac12})\leq d-2\,.
    \]
\end{theorem}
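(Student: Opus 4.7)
The strategy is to run an Almgren frequency blow-up at every point of the putative singular set $\Sigma := \{x=(x',0)\in B_{\sfrac12}:\,w(x)=0,\,|\nabla w|(x)=0\}$ and reduce to the known dimension estimate for the singular set of the linear thin-obstacle problem. The plan mirrors Step 3 of the sketch but now applied directly to the functional $\mathcal F+\mathcal E$ from \cref{thm:thinobs}.

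\medskip

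\emph{Step 1: frequency at each singular point.} For $x_0\in\Sigma$ and $r>0$ small I introduce the Almgren-type boundary frequency
\[
N(x_0,r,w) \;=\; \frac{r\,D(x_0,r,w)}{H(x_0,r,w)}\,,
\]
where $D$ is the Dirichlet-type energy (with the matrix $M$, the term $\partial_dQ\,w\,\partial_dw$ and the Robin boundary contribution) on $B_r^+(x_0)$ and $H$ is the $L^2$-height on $(\partial B_r)^+\cup (\text{reflection})$. By the almost-monotonicity result (\cref{thm:freqmon} in Step 2 of the sketch), the quantity $e^{Cr^\alpha}(N(x_0,r,w)+\text{error})$ is nondecreasing in $r$, which yields the existence of the pointwise frequency
\[
N(x_0,0^+,w)\;=\;\lim_{r\to 0^+}N(x_0,r,w).
\]
Since $w\in C^{1,\sfrac12}$ vanishes at first order at $x_0$, any nontrivial blow-up must have frequency $\ge 3/2$, the characteristic value of the linear Signorini problem.

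\medskip

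\emph{Step 2: classification of blow-ups.} I rescale
\[
w_{x_0,r}(y) \;:=\; \frac{w(x_0+ry)}{\bigl(r^{1-d}H(x_0,r,w)\bigr)^{\sfrac12}},
\]
and pass to the limit $r\to 0$. The $C^{1,\alpha}$ estimates of point (i) in the sketch, combined with the normalization and the almost-monotonicity, give uniform $H^1$ and $C^{1,\alpha}_{\mathrm{loc}}$ bounds. The nonlinear error $\mathcal E$ is cubic in $(w,\nabla w)$, so it is lower order in the rescaled functional. The coefficients $M(x)$ and $\partial_dQ(x)$ freeze at $x_0=0$ along the subsequence, and $\partial_d Q(0)$ is the relevant Robin coefficient. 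A standard computation shows that the limit $w_0$ is a nontrivial homogeneous solution of the linear thin-obstacle problem in a half-space (after an affine change of coordinates diagonalizing $M(x_0)$), with homogeneity equal to $N(x_0,0^+,w)$.

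\medskip

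\emph{Step 3: stratification, dimension reduction and conclusion.} I stratify $\Sigma$ according to the frequency value: $\Sigma_\lambda=\{x_0\in\Sigma:\,N(x_0,0^+,w)=\lambda\}$. By the upper semicontinuity of $x_0\mapsto N(x_0,0^+,w)$ inherited from the almost-monotonicity (via the standard argument of adding the exponential correction), each $\Sigma_\lambda$ is a Borel set, and $\Sigma=\bigcup_{\lambda\ge 3/2}\Sigma_\lambda$. I then apply the dimension-reduction lemma tailored to our setting (\cref{lem:dimensionreduction}): if $\mathcal H^s(\Sigma_\lambda)>0$ for some $s>d-2$, a density-point argument together with the blow-up classification of Step 2 produces a blow-up $w_0$ that is translation invariant along an $(s)$-dimensional subspace of $B_1'$ and that still has a singular set of dimension $>d-2$. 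This reduces the problem to the linear Signorini obstacle problem, for which the singular set (points of vanishing of order $\ge 3/2$) has Hausdorff dimension at most $d-2$ by Focardi--Spadaro. This contradiction, applied to each stratum, gives $\dim_{\mathcal H}(\Sigma)\le d-2$; and if $w\not\equiv 0$, strong unique continuation (which follows from the finiteness of $N(x_0,0^+,w)$ and the analyticity of $M$) rules out the case $w\equiv 0$ on any neighborhood.

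\medskip

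\emph{Main obstacle.} The technical heart is Step 2: the blow-ups must be nontrivial and must solve a \emph{linear} Signorini problem. Triviality is ruled out by the $H$-normalization and a non-degeneracy estimate that is provided by the three-annuli lemmas of point (ii) in the sketch. Linearity is the delicate part because $w$ is only $C^{1,\sfrac12}$ and hence the coefficient $A(x,w,\nabla w)$ is only $C^{0,\sfrac12}$; this is exactly where the Calder\'on--Zygmund decomposition of point (iv) enters, by enforcing doubling along a good subsequence of radii and thereby killing the cubic error terms at the appropriate rate.
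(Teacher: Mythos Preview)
Your proposal is correct and follows essentially the same route as the paper: almost-monotonicity of the boundary Almgren frequency via the Calder\'on--Zygmund decomposition and the regularity estimates (\cref{prop:nonlinearThinObstacleC1aRegularity}, \cref{lemma:excessOnePhaseThreeAnnuliLemma}--\cref{lemma:heightOnePhaseThreeAnnuliLemma}, \cref{prop:errors}, \cref{thm:freqmon}), followed by a frequency blow-up and the dimension-reduction argument of \cref{lem:dimensionreduction}. One small clarification: in this theorem the competitors are required to \emph{vanish} on $B_1'$ (Dirichlet condition), not merely to be nonnegative; consequently the blow-up $w_0$ is a homogeneous harmonic function in $B_1^+$ with $w_0\equiv 0$ on $\{x_d=0\}$, which after odd reflection is a homogeneous harmonic function in the full ball. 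This is of course a particular (full-contact) case of a Signorini solution, so your reduction to \cref{lem:dimensionreduction} is valid, but the limiting problem is actually simpler than you state and the appeal to Focardi--Spadaro is unnecessary.
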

\begin{theorem}[Dimension of the free boundary in a thin-obstacle problem]\label{thm:boundaryuniquecontinuation2}
    Let $w\in C^{1,\sfrac12}(\overline{B_1^+})\cap W^{1,2}(B_1^+)$ be a minimizer of the functional $\mathcal F+\mathcal E$, defined in \cref{thm:thinobs}, among all functions with prescribed boundary datum on $(\partial B_1)^+$ which are non-negative on $B_1'$. Then either $w\equiv 0$ or 
    \[
    \dim(\{x=(x',0)\,:\,w(x)=0\,\text{ and }\,|\nabla w|(x)=0 \}\cap B_{\sfrac12})\leq d-2\,.
    \]
\end{theorem}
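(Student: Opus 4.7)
The strategy parallels Step 3 of the sketch for the main theorem, with the only adjustment coming from the unilateral (thin-obstacle) condition $w \geq 0$ on $B_1'$ in place of the Dirichlet condition $w \equiv 0$ on $B_1'$ used in \cref{thm:boundaryuniquecontinuation}. The plan is to combine the almost-monotonicity of the boundary Almgren frequency from \cref{thm:freqmon} with a Federer-type dimension reduction based on \cref{lem:dimensionreduction}, then use the fact that $\mathcal F + \mathcal E$ linearizes at contact points to a classical Signorini-type energy (the error $\mathcal E$ in \cref{thm:thinobs} contains only terms of order $\geq 3$).

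First, I would verify that the inner/outer variational identities of \cref{lem:freqidentities} remain valid under the one-sided constraint, so that the almost-monotonicity formula of \cref{thm:freqmon} still applies. At points $x' \in B_1'$ where $w(x',0) > 0$ all admissible variations are two-sided and the identities are immediate. At contact points $w(x',0) = 0$, only non-negative outer perturbations are allowed on $B_1'$, but the complementarity relation $w\,\partial_d w = 0$ holding a.e.\ on $B_1'$ (the free-boundary condition built into the thin-obstacle problem) restores the balance in the outer variation; inner variations tangent to $B_1'$ preserve the constraint and thus give the usual identity. This is precisely the extra flexibility that allows the frequency-monotonicity developed for the one-phase setting to cover the thin-obstacle setting as well.

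Second, assuming $w \not\equiv 0$, define
\[
\Sigma := \{x = (x',0) \in B_{\sfrac12} : w(x) = 0, \; |\nabla w|(x) = 0\}.
\]
Fix $x_0 \in \Sigma$. The almost-monotonicity from \cref{thm:freqmon} gives existence of the frequency limit $N(0^+, w, x_0)$, and the optimal $C^{1,\sfrac12}$ regularity of $w$ together with $w(x_0) = |\nabla w|(x_0) = 0$ yields $N(0^+, w, x_0) \geq 3/2$. Using the $C^{1,\alpha}$ estimates of \cref{prop:nonlinearThinObstacleC1aRegularity} and the three-annuli lemmas \cref{lemma:excessOnePhaseThreeAnnuliLemma} and \cref{lemma:heightOnePhaseThreeAnnuliLemma} along a Calder\'on–Zygmund stopping-time sequence of radii, I would extract a non-trivial $\alpha$-homogeneous frequency blow-up $w_0$ at $x_0$ with $\alpha = N(0^+, w, x_0) \geq 3/2$. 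Since the functional $\mathcal F + \mathcal E$ differs from the thin-Signorini quadratic form by terms of order at least $3$, the limit $w_0$ is a global $\alpha$-homogeneous solution of the classical harmonic thin-obstacle problem on $\R^d_+$ with obstacle $0$ on $\R^{d-1}$, still satisfying $w_0 = |\nabla w_0| = 0$ at the origin.

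Third, I would close the argument by dimension reduction. The stratification of $\Sigma$ by the dimension of the translation-invariance space of the blow-up, combined with \cref{lem:dimensionreduction}, reduces everything to showing that no non-trivial homogeneous Signorini solution is invariant under translations in $d-1$ directions — which is immediate, since such invariance together with homogeneity and the vanishing condition at the origin would force $w_0 \equiv 0$, contradicting non-triviality. This yields $\dim_{\mathcal H}(\Sigma) \leq d-2$. The main obstacle is the verification in the first paragraph: one must carefully track the sign of the boundary contributions from the non-analytic, third-order gradient piece of $\mathcal E$ under unilateral variations, and it is here that the regularity-plus-doubling machinery of points (i)–(iv) in the sketch is genuinely needed rather than just the variational structure alone.
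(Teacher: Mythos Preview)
Your proposal is essentially correct and follows the same route as the paper, but you have the logical dependence inverted: the entire machinery of \cref{section:L2LinftyEstimates}--\cref{section:FrequencyMonotonicity} is developed \emph{directly} for minimizers of \eqref{eqn:thinObstacleRobinRegularity}, which is precisely the thin-obstacle constraint $\varphi\ge 0$ on $B_1'$. So \cref{thm:boundaryuniquecontinuation2} is the immediate consequence of that machinery, and it is \cref{thm:boundaryuniquecontinuation} (the Dirichlet case) that is the variant --- not the other way around. In particular, no ``adjustment'' of the variational identities is required: the outer variation in \cref{lem:freqidentities} uses the test function $\psi_r w$, and since $(1+t\psi_r)w\ge 0$ on $B_1'$ for all small $|t|$, both signs of $t$ are admissible and the derivative at $t=0$ vanishes exactly; the inner variation preserves the constraint because $F\cdot e_d=0$ on $B_1'$ by \eqref{eqn:InnerVariationFieldFProperties}. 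Your appeal to the complementarity relation $w\,\partial_d w=0$ is therefore unnecessary (though not wrong).

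One further minor correction: the Calder\'on--Zygmund decomposition and the three-annuli lemmas are used to control the error terms and thereby establish the almost-monotonicity of $N$ (this is \cref{prop:errors} and \cref{thm:freqmon}); they are not used to extract the blow-up itself. Once \cref{thm:freqmon} is in hand, the blow-up in \cref{section:FrequencyMonotonicity} is the standard Almgren rescaling $w_n(x)=w(r_n x)/\widetilde H(r_n)^{1/2}$, with compactness coming from \cref{prop:nonlinearThinObstacleC1aRegularity}, and the conclusion via \cref{lem:dimensionreduction} is exactly as you describe.
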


\subsection{On the branching set of the two-phase problem with analytic separation}\label{section:BranchinSetTwoPhaseAnalyticSeparation}

As a corollary of \cref{thm:onePhaseAnalyticObstacle} we obtain an estimate on the dimension of the set of branching points in the two-phase problem, under the following analytic separation property:
\begin{definition}[Analytic separation condition]\label{def:AnalyticSeparationCondition}
        We say that a solution $u$ to the two-phase problem satisfies the analytic separation condition if there exists an analytic function $\phi:B_1'\to\R$ such that
        \[
        \mathrm{graph}(\phi) \subseteq \{ u=0 \} \cap B_1.
        \]
    \end{definition}
We recall that a sign-changing function $u\in H^1(B_1)$ is said to be a solution of the two-phase Bernoulli free boundary problem in $B_1 \subset \R^d$ if 
        \[
        J_2(u,B_1)\le J_2(v,B_1)\quad\text{for every}\quad v\in H^1(B_1)\quad\text{such that}\quad u=v\quad\text{on}\quad \partial B_1,
        \]
        where the two-phase functional $J_2$ is defined as
        \[
        J_2(v,B_1):= \int_{B_1} \vert \nabla v \vert^2\,dx + \lambda_+\vert 
        \{v>0\} \cap B_1 \vert+ \lambda_-\vert 
        \{v<0\} \cap B_1 \vert\,,
        \]
        and where $\lambda_\pm>0$ are fixed constants. It is known from \cite{DePhilippisSpolaorVelichkov2021:TwoPhaseBernoulli} that, in any dimension $d\ge 2$, the free boundaries $\partial\Omega_u^\pm=\partial\{\pm u>0\}$ are two $C^{1,\alpha}$ regular manifolds of dimension $d-1$ in a neighborhood of the contact set 
        $$\mathcal C_2(u):=B_1\cap\partial\Omega_u^+\cap\partial\Omega_u^-.$$
        The set of branching points in this case is defined as 
\begin{equation}\label{e:intro-two-phase-branching-points}
\mathcal B_2(u):=\Big\{x\in \mathcal C_2(u)\ :\ \left|B_r(x)\cap  \{u=0\}\right|\neq 0\ \text{ for every }\ r>0\Big\},
\end{equation}
while the set of singular points $\mathcal S_2(u)$ is given by
\begin{equation}\label{e:def-S-two-phase}
\mathcal S_{2}(u):=\Big\{x\in \mathcal C_2(u)\,:\,|\nabla u_+|(x)=\sqrt{\lambda_+}\ \text{and}\ |\nabla u_-|(x)=\sqrt{\lambda_-}\Big\}.
\end{equation}
As in the boundary one-phase case, we have that $\mathcal B_{2}(u)\subset \mathcal S_{2}(u)$.
    \begin{theorem}[Unique continuation for the two-phase problem with analytic separation]\label{theorem:twoPhaseSymmetricBranchingEst}
        Let $u$ be a solution of the two-phase Bernoulli free boundary problem in $B_1 \subset \R^d$ and suppose that $u$ satisfies the analytic separation condition. Then, either
        \[
        \text{$u$ is harmonic in $B_1$}
        \]
        or
        \[
        \text{the set of branching points of $u$ has Hausdorff dimension at most $d-2$}.
        \]
        Moreover, in the second case, if $d=2$ then the set of branching points is locally finite.
    \end{theorem}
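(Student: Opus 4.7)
The plan is to reduce \cref{theorem:twoPhaseSymmetricBranchingEst} to the boundary one-phase result \cref{thm:onePhaseAnalyticObstacle}, applied separately on each side of the analytic separator $\mathrm{graph}(\phi)$. Set $\mathcal{A}^\pm := \{x\in B_1:\ \pm(x_d-\phi(x'))>0\}$. The first task is to show that $\mathrm{graph}(\phi)$ actually separates the two phases: since $\{u>0\}$ and $\{u<0\}$ are open and disjoint from $\mathrm{graph}(\phi)\subseteq\{u=0\}$, each of them is contained in $\mathcal{A}^+\cup\mathcal{A}^-$, and the blow-up analysis at contact points from \cite{DePhilippisSpolaorVelichkov2021:TwoPhaseBernoulli} (which forces the two $C^{1,\alpha}$ free boundaries to share a common tangent hyperplane at every contact point), combined with the real-analyticity of $\phi$, gives, after relabeling, $\{u>0\}\subseteq \mathcal{A}^+$ and $\{u<0\}\subseteq \mathcal{A}^-$ in a neighbourhood of $\mathcal{C}_2(u)$. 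In particular $\mathcal{B}_2(u)\subseteq \mathcal{C}_2(u)\subseteq \overline{\mathcal{A}^+}\cap\overline{\mathcal{A}^-}=\mathrm{graph}(\phi)$.

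With the separation in hand, set $u_\pm:=\max(\pm u,0)$. Given any nonnegative $v\in H^1(B_1)$ with $v=u_+$ on $\partial B_1$ and $\{v>0\}\subseteq\mathcal{A}^+$, the function $\tilde u:=v-u_-$ is an admissible two-phase competitor to $u$ (it coincides with $u_+-u_-=u$ on $\partial B_1$), and since $v$ and $u_-$ are supported in disjoint open sets meeting only on the measure-zero set $\mathrm{graph}(\phi)$, we have a.e.\ $|\nabla\tilde u|^2=|\nabla v|^2+|\nabla u_-|^2$, $\{\tilde u>0\}=\{v>0\}$ and $\{\tilde u<0\}=\{u<0\}$. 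The minimality inequality $J_2(u,B_1)\leq J_2(\tilde u,B_1)$ then collapses to
\[
\int_{B_1}|\nabla u_+|^2+\lambda_+\,|\{u_+>0\}|\;\leq\;\int_{B_1}|\nabla v|^2+\lambda_+\,|\{v>0\}|,
\]
so after the trivial rescaling $w_+:=u_+/\sqrt{\lambda_+}$ the function $w_+$ is a solution of \eqref{e:intro-onePhase-variational-problem} with $\mathcal{A}=\mathcal{A}^+$; symmetrically $w_-:=u_-/\sqrt{\lambda_-}$ solves \eqref{e:intro-onePhase-variational-problem} with $\mathcal{A}=\mathcal{A}^-$ (after the obvious reflection making $\mathcal{A}^-$ a supergraph of an analytic function).

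Finally, I apply \cref{thm:onePhaseAnalyticObstacle} to both $w_+$ and $w_-$. If the trivial alternative $\partial\Omega_{w_\pm}\cap B_1\equiv \mathrm{graph}(\phi)\cap B_1$ holds for both signs, then $u>0$ in $\mathcal{A}^+$, $u<0$ in $\mathcal{A}^-$ and $\{u=0\}=\mathrm{graph}(\phi)$ has Lebesgue measure zero; in particular $\mathcal{B}_2(u)=\emptyset$, and combining the one-phase boundary relations $|\nabla w_\pm|=1$ on $\mathrm{graph}(\phi)$ with the two-phase transmission condition $|\nabla u_+|^2-|\nabla u_-|^2=\lambda_+-\lambda_-$ forces continuity of $\nabla u$ across $\mathrm{graph}(\phi)$, whence $u$ is harmonic in $B_1$. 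Otherwise, for any $x_0\in\mathcal{B}_2(u)\subseteq\mathrm{graph}(\phi)$ one has $|B_r(x_0)\cap\{u=0\}|>0$ for every $r>0$, and since this positive measure must concentrate in at least one of $\mathcal{A}^+$ or $\mathcal{A}^-$, the point $x_0$ is a one-phase branching point either of $w_+$ or of $w_-$, giving $\mathcal{B}_2(u)\subseteq \mathcal{S}_1(w_+)\cup\mathcal{S}_1(w_-)$. The Hausdorff dimension bound $d-2$ and the local finiteness in dimension $d=2$ then follow immediately from \cref{thm:onePhaseAnalyticObstacle}. The main obstacle is Step~1: ruling out configurations where $\{u>0\}$ or $\{u<0\}$ straddles $\mathrm{graph}(\phi)$ is the only genuinely two-phase part of the argument and relies crucially on the branch-point blow-up analysis of \cite{DePhilippisSpolaorVelichkov2021:TwoPhaseBernoulli} together with the real-analyticity of $\phi$; the subsequent reduction is a direct competitor argument.
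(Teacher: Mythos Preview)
Your approach is essentially the same as the paper's: reduce to the boundary one-phase result \cref{thm:onePhaseAnalyticObstacle} by showing that $w_\pm:=u_\pm/\sqrt{\lambda_\pm}$ are one-phase minimizers with obstacle $\mathrm{graph}(\phi)$, and then invoke the containment $\mathcal{B}_2(u)\subseteq \mathcal{S}_1(w_+)\cup\mathcal{S}_1(w_-)$. The paper's proof is in fact much terser than yours and implicitly assumes the separation $\{u>0\}\subseteq\mathcal{A}^+$, $\{u<0\}\subseteq\mathcal{A}^-$; your Step~1 makes this explicit, which is a genuine improvement in rigor (and you are right that it only needs to hold locally near $\mathcal{C}_2(u)$, since branching points lie there).

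There is one small gap in your treatment of the doubly-degenerate case. When $\partial\Omega_{w_\pm}\cap B_1\equiv\mathrm{graph}(\phi)\cap B_1$ for both signs, you correctly observe that $\{u=0\}=\mathrm{graph}(\phi)$ has measure zero and hence $\mathcal{B}_2(u)=\emptyset$. However, your further claim that $u$ is harmonic relies on the assertion ``$|\nabla w_\pm|=1$ on $\mathrm{graph}(\phi)$'', which is not justified: in the degenerate alternative of \cref{thm:onePhaseAnalyticObstacle} the entire free boundary lies on $\partial\mathcal{A}$, where the one-phase condition is only $|\nabla w_\pm|\geq 1$ (see \eqref{eqn:onePhase-equation}), not equality. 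In particular, for $\lambda_+\neq\lambda_-$ one can have the doubly-degenerate configuration with $|\nabla u_+|\neq|\nabla u_-|$ along $\mathrm{graph}(\phi)$, so $\nabla u$ has a jump and $u$ is not harmonic. This does not damage the theorem: since $\mathcal{B}_2(u)=\emptyset$ in that case, the second alternative holds trivially, and the disjunction in the statement is satisfied. You should simply drop the harmonicity claim in the doubly-degenerate case and conclude via the empty branching set instead.
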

    \begin{proof}
    The claim follows from \cref{thm:onePhaseAnalyticObstacle} since the positive and the negative parts of $u$ are separately solutions to \eqref{e:intro-onePhase-variational-problem}. Indeed, if we take $\mathcal A$ to be epigraph of $\phi$ and if we define the function $\widetilde u:=\lambda_+^{-\sfrac12}u_+$, then $\widetilde u$ is a solution to \eqref{e:intro-onePhase-variational-problem} in $B_1$ and set $\mathcal S_1(\widetilde u)$ is contained in $\mathcal S_2(u)$.
    \end{proof} 

\section{Change of coordinates to a quasilinear boundary unique continuation problem}\label{section:onePhaseFrequencyChangeOfCoordinates}

In this section we construct a suitable change coordinates that will allow us to recast the minimization problem \eqref{e:def-S-one-phase} as a minimization problem for a suitable nonlinear thin-obstacle type energy (see \cref{thm:thinobs}). We start with the following application of Cauchy-Kovalevskaya theorem.

\begin{proposition}[Extension of the obstacle]\label{prop:CK}
    Given an analytic function $\phi$ as in Theorem \ref{thm:onePhaseAnalyticObstacle}, there exist a radius $\rho = \rho(\phi)>0$ and an analytic  function $m\colon B_\rho\to \R $, such that if we set $\Omega_m:=\mathcal A$ then 
    $$\begin{cases}
\Delta m=0\,\text{ and }\,m>0&\text{in}\quad B_\rho\cap \Omega_m,\\
\nu_\phi\cdot\nabla m=-1\,\text{ and }\, m=0&\text{on}\quad B_\rho\cap \partial\Omega_m,
\end{cases}$$
where $\nu_\phi$ is the outward unit normal to $\Omega_m$.  
\end{proposition}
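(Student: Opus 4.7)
The proposition is a direct application of the Cauchy--Kovalevskaya theorem, so the strategy is to construct $m$ as the unique analytic solution of a Cauchy problem on the analytic hypersurface $\Sigma := \partial\mathcal{A}\cap B_r = \{x_d=\phi(x')\}\cap B_r$, for a sufficiently small $r>0$, and then verify that the sign condition $m>0$ on $B_\rho\cap\Omega_m$ holds by shrinking $\rho$.

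First, I would observe that $\Sigma$ is non-characteristic for the Laplacian, since the principal symbol $|\xi|^2$ of $\Delta$ is non-vanishing for $\xi\neq 0$. Because $\phi$ is real analytic, so is $\Sigma$, and the Cauchy data
\[
m\big|_{\Sigma}=0,\qquad \nu_\phi\cdot\nabla m\big|_{\Sigma}=-1
\]
are trivially analytic on $\Sigma$. The Cauchy--Kovalevskaya theorem then provides an open neighborhood $U$ of the origin and a unique analytic function $m\colon U\to\R$ satisfying $\Delta m=0$ in $U$ together with the two boundary conditions above. Choosing $\rho>0$ small enough that $B_\rho\subset U$ takes care of the harmonicity and the boundary conditions.

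It remains to establish the strict positivity $m>0$ in $B_\rho\cap\Omega_m$. Since $m$ vanishes on $\Sigma$, every tangential derivative of $m$ along $\Sigma$ is zero, and hence $\nabla m=(\nu_\phi\cdot\nabla m)\,\nu_\phi=-\nu_\phi$ on $\Sigma$. Using the explicit form $\nu_\phi=(\nabla\phi,-1)/\sqrt{1+|\nabla\phi|^2}$, this gives
\[
\partial_{x_d} m=\frac{1}{\sqrt{1+|\nabla\phi|^2}}>0\qquad\text{on }\Sigma.
\]
By continuity of $\nabla m$, this strict positivity of $\partial_{x_d}m$ persists in a neighborhood of $\Sigma$. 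Since $\Omega_m=\{x_d>\phi(x')\}$ writes every point of $B_\rho\cap\Omega_m$ as lying on a vertical segment issuing from $\Sigma$, integrating $\partial_{x_d}m$ along that segment (and using $m=0$ at its base) yields $m>0$ throughout $B_\rho\cap\Omega_m$ after a further reduction of $\rho$. There is no real obstacle in this proof: the only genuine input is the availability of Cauchy--Kovalevskaya, which the analyticity of $\phi$ supplies, and the sign check is an immediate consequence of the normal-derivative condition.
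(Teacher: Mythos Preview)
Your proposal is correct and follows exactly the same approach as the paper: apply Cauchy--Kovalevskaya to the analytic Cauchy data on the analytic non-characteristic hypersurface $\mathrm{graph}(\phi)$, and then deduce the positivity of $m$ on $\Omega_m$ from the normal-derivative condition. The paper's own proof is just a two-sentence summary of precisely these two steps, so your write-up is simply a more detailed version of the same argument.
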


\begin{proof}
    The proof is a straightforward application of Cauchy-Kovalevskaya theorem, noticing that the boundary hypersurface, i.e. ${\rm graph}(\phi)$, and the boundary conditions are all analytic. The positivity of $m$ on $\Omega_m$ follows from the gradient condition.
\end{proof}
Next, using a suitable flow associated to the vector field $\nabla m$, we define a new change of coordinates which sends the level sets of $m$ to the ones of $x_d$.
\begin{proposition}[Change of coordinates]\label{prop:changeofcoord}
Given a $C^2$ function $m$ as in \cref{prop:CK}, there exist $\delta>0$ and a $C^1$ diffeomorphism onto its image $\Phi \colon B_\delta'\times[0,\delta) \to B_\rho$ such that 
$$\begin{cases}
\partial_t\Phi(x',t)=\frac{\nabla m}{|\nabla m|^2}(\Phi(x',t))\quad\text{for every}\quad t\in[0,\delta),\\
\Phi(x',0)=(x',\phi(x'))\in\partial\Omega_m.
\end{cases}$$
In particular, for every $(x',t)\in B_\delta'\times[0,\delta)$, it holds
\begin{equation}\label{e:PhiProperties-levels-of-m}
m(\Phi(x',t))=t;
\end{equation}
\begin{equation}\label{e:PhiProperties-Phi0}
\partial_i\Phi(x',0)\cdot\partial_j\Phi(x',0)=\delta_{ij}+\partial_i\phi(x')\partial_j\phi(x')\quad\text{for every}\quad i,j=1,\dots,d-1;\end{equation}
\begin{equation}\label{e:PhiProperties-orto}
\partial_j\Phi(x',t)\cdot\partial_t\Phi(x',t)=0\quad\text{for every}\quad j=1,\dots,d-1;
\end{equation}
\begin{equation}\label{e:PhiProperties-Delta-m}
\partial_t\Big(|\nabla m|^2(\Phi(x',t))\det(D\Phi(x',t))\Big)=0\,.
\end{equation}
\end{proposition}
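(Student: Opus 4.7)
The plan is to build $\Phi$ as the flow of the vector field $F(y) := \nabla m(y)/|\nabla m(y)|^2$ starting from the graph of $\phi$, and then derive the four listed properties by direct differentiation. As a preliminary step, notice that on $B_\rho \cap \partial\Omega_m$ the condition $m=0$ forces $\nabla m$ to be parallel to the outward unit normal $\nu_\phi$, and the Neumann condition from \cref{prop:CK} then fixes $\nabla m = -\nu_\phi$; in particular $|\nabla m| = 1$ on $\partial\Omega_m$. By continuity, $|\nabla m| \geq \sfrac12$ in a one-sided tubular neighborhood of $\partial\Omega_m$, so that $F$ is analytic there. Standard ODE theory with smooth dependence on parameters provides local existence, uniqueness and $C^1$ regularity of the flow $\Phi(x',t)$, and a compactness argument in $x'$ furnishes a uniform $\delta > 0$ on which $\Phi \colon B'_\delta \times [0,\delta) \to B_\rho$ is well-defined.

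Properties \eqref{e:PhiProperties-levels-of-m}, \eqref{e:PhiProperties-Phi0} and \eqref{e:PhiProperties-orto} should then follow quickly. Differentiating $t \mapsto m(\Phi(x',t))$ gives $\tfrac{d}{dt}m(\Phi) = \nabla m(\Phi) \cdot F(\Phi) = 1$, and since $m(\Phi(x',0)) = m(x',\phi(x')) = 0$, integration yields \eqref{e:PhiProperties-levels-of-m}. For \eqref{e:PhiProperties-Phi0}, the initial condition $\Phi(x',0) = (x',\phi(x'))$ gives $\partial_i \Phi(x',0) = (e_i, \partial_i \phi(x'))$, from which the claimed dot products are immediate. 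For \eqref{e:PhiProperties-orto}, differentiating \eqref{e:PhiProperties-levels-of-m} in $x_j$ gives $\nabla m(\Phi) \cdot \partial_j \Phi = 0$; since $\partial_t \Phi = F(\Phi)$ is parallel to $\nabla m(\Phi)$, orthogonality follows.

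The diffeomorphism property at $t=0$ reduces to checking that the tangent frame $\{(e_i, \partial_i\phi)\}_{i=1}^{d-1}$ and the transversal vector $\partial_t\Phi(x',0) = -\nu_\phi(x',\phi(x'))$ are linearly independent, which is clear. The inverse function theorem, applied uniformly over $\overline{B'_\delta}$ via compactness, then allows us to shrink $\delta$ so that $\Phi$ is a $C^1$ diffeomorphism onto its image.

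The identity \eqref{e:PhiProperties-Delta-m} is the heart of the proposition and, in my opinion, the step that requires the cleanest insight, since this is where the harmonicity of $m$ enters. Setting $J(x',t) := |\nabla m(\Phi)|^2 \det D\Phi$, Liouville's formula gives $\partial_t \det D\Phi = (\operatorname{div} F)(\Phi) \det D\Phi$, while the chain rule gives $\partial_t(|\nabla m|^2 \circ \Phi) = \nabla(|\nabla m|^2)(\Phi) \cdot F(\Phi)$. Combining these,
\[
\partial_t J = \big[\operatorname{div}(|\nabla m|^2 F)\big](\Phi) \, \det D\Phi.
\]
The key algebraic observation is $|\nabla m|^2 F = \nabla m$, whence $\operatorname{div}(|\nabla m|^2 F) = \Delta m = 0$ by \cref{prop:CK}, yielding $\partial_t J \equiv 0$. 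No other step presents serious difficulty; conceptually, the construction amounts to building harmonic coordinates adapted to the free boundary of $m$, in which the Jacobian weight $|\nabla m|^2 \det D\Phi$ is preserved along the flow.
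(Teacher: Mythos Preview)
Your argument is correct. The construction of $\Phi$ and the verifications of \eqref{e:PhiProperties-levels-of-m}--\eqref{e:PhiProperties-orto} match the paper's proof essentially line for line. For \eqref{e:PhiProperties-Delta-m}, however, you take a genuinely different and more direct route: you apply Liouville's formula $\partial_t\det D\Phi=(\diverg F)(\Phi)\det D\Phi$ pointwise and combine it with the product rule to obtain $\partial_t J=\big[\diverg(|\nabla m|^2 F)\big](\Phi)\det D\Phi$, after which the algebraic identity $|\nabla m|^2 F=\nabla m$ reduces everything to $\Delta m=0$. The paper instead proceeds integrally: it identifies $\det D\Phi=|\nabla m|^{-1}(\Phi)\sqrt{\det A}$ via the block structure of $(D\Phi)(D\Phi)^T$, integrates $\partial_t J$ over an arbitrary tube $\Omega'\times(s,t)$, rewrites the result as a flux of $\nabla m$ through the boundary of the image region $\Omega_{t,s}$, and invokes the divergence theorem together with $\Delta m=0$ to conclude that the integral vanishes, hence the integrand does. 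Your pointwise argument is shorter and isolates the role of harmonicity in a single line; the paper's integral argument is more geometric and makes the flux interpretation explicit, which foreshadows how $|\nabla m|^2\det D\Phi$ later appears as a weight in the transformed energy. One small remark: you call $F$ analytic, but the proposition only assumes $m\in C^2$; this is harmless since $C^1$ regularity of $F$ suffices both for the flow construction and for Liouville's formula (the variational equation $\partial_t D\Phi=DF(\Phi)D\Phi$ holds column by column, including the $\partial_t\Phi$ column).
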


\begin{proof} The existence of $\delta$ and $\Phi$ and its regularity are a standard consequence of Cauchy-Lipschitz and the condition $|\nabla m|=1$ on $\partial \Omega_m$.

The property \eqref{e:PhiProperties-levels-of-m} follows directly from the definition of the flow $\Phi$ by differentiating $m(\phi(x',t))$ in $t$, and \eqref{e:PhiProperties-orto} follows by taking the derivative of \eqref{e:PhiProperties-levels-of-m} with respect to $x_j$. Finally, in order to prove \eqref{e:PhiProperties-Delta-m}, we first notice that 
$$(D\Phi)(D\Phi)^{T}=\begin{pmatrix}
    A & 0 \\
    0 & 1/\vert \nabla m \vert^2
\end{pmatrix},$$
where $A=A(x',t)$ is the symmetric $(d-1)\times(d-1)$ matrix with coefficients $A_{ij}=\partial_i\Phi\cdot\partial_j\Phi$, $i,j=1,\dots,d-1$. 
Thus
$$\det(D\Phi)=\sqrt{\det((D\Phi)(D\Phi)^{T})}=\frac1{\vert \nabla m \vert(\Phi)}\sqrt{\det A},$$
For any smooth set $\Omega'\subset B_\delta'\subset \R^{d-1}$, and for any $0\le t<s<\delta$, we set 
$$\Omega_{t,s}=\Big\{\Phi(x',\tau)\ :\ \tau\in(s,t),\ x'\in \Omega'\Big\},$$
and we compute
\begin{align*}
\int_{s}^{t}&\int_{\Omega'}\partial_t\Big[|\nabla m|^2(\Phi)\det(D\Phi)\Big]\\
&=\int_{B_\rho}\int_{s}^{t}\partial_t\Big[|\nabla m|^2(\Phi)\det(D\Phi)\Big]\\
&=\int_{\Omega'}\Big[|\nabla m|^2(\Phi(x',t))\det(D\Phi(x',t))-|\nabla m|^2(\Phi(x',s))\det(D\Phi(x',s))\Big]\\
&=\int_{\Omega'}\Big[|\nabla m|(\Phi(x',t))\sqrt{\det A(x',t)}-|\nabla m|(\Phi(x',s))\sqrt{\det A(x',s)}\Big]\\
&=\int_{\Phi(\Omega',t)}|\nabla m|-\int_{\Phi(\Omega',s)}|\nabla m|+\underbrace{\int_{\{\Phi(x',\tau)\ :\ \tau\in(s,t),\ x'\in \Omega'\}}\nu\cdot \nabla m}_{=0}\\
&=\int_{\partial \Omega_{t,s}}\nu\cdot\nabla m=\int_{ \Omega_{t,s}}\Delta m = 0,
\end{align*}
where $\nu$ is the exterior normal to $\Omega_{t,s}$. Since $\Omega'$, $s$ and $t$ are arbitrary, we get \eqref{e:PhiProperties-Delta-m}.
\end{proof}

We can finally state the main result of this section.

\begin{proposition}[Reduction to a quasilinear boundary unique continuation problem]\label{thm:thinobs}
    Let $u$, $\phi$ and $f$ be as in \cref{thm:onePhaseAnalyticObstacle} and let $0\in \mathcal S_1(u)$. 
Given $m, \Phi$ as in \cref{prop:CK} and \cref{prop:changeofcoord}, and setting  $A:=(\partial_i \Phi \cdot \partial_j\Phi)_{i,j=1}^{d-1}$ and let $S=A^{-1}$ and we define the elliptic, symmetric matrix $M$ with smooth coefficients by
\begin{equation}\label{eq:elliptic_matrix}
M(x):=\det(D\Phi)(x)\, \begin{pmatrix}
    S(x) & 0 \\
    0 & \vert \nabla m \vert^2(\Phi(x))
    \end{pmatrix} \,,
\end{equation}
and the smooth function $Q$ by
 \[
Q(x', x_d) \coloneqq \left( 1 - \vert \nabla m \vert^2 (\Phi(x', x_d)) \right) \det\left( D\Phi \right)(x', x_d).
\]
Then there exists a Lipschitz function $w\colon B_1^+\to \R$ which minimizes
\begin{equation}\label{eqn:thinObstacleRobinRegularity}
\min\left\{ \mathcal{F}(\varphi)+\mathcal E(\varphi) : \varphi \in \mathcal{L}_{\sfrac12} \text{ with } \varphi \ge 0 \text{ on } B_1' \text{ and } \varphi = w  \text{ on } \partial B_1^+ \right\},
\end{equation}
with
\[
\mathcal{L}_{\sfrac12} \coloneqq \left\{ u \in Lip\left( \overline{B_1^+}\right) : \| u \|_{Lip\left( \overline{B_1^+} \right)} \le \sfrac12 \right\}
\]
and where the energies $\mathcal F,\,\mathcal E$ are defined by
\begin{align}
    &\mathcal F(w;r)
        :=\int_{B^+_r} \Big(M(x) \nabla w\cdot \nabla w + \partial_d Q(x)\,w\,\partial_dw\Big),\\
&\mathcal E(w;r):=\int_{B^+_r}\left(\sum_{k=1}^3g_k(x,w,\nabla w)w^{3-k}\,P_k(\nabla w)\right),
\end{align}
where $g_k(x,y,p)$, $k=1,2,3$, are analytic functions of $x,y,p$ and where $P_k(p)$, $k=1,2,3$, are $k$-homogeneous polynomials of $p\in\R^d$. 
Moreover 
\begin{equation}\label{eq:contact_set_x}
(x',f(x'))\in \mathcal C_1(u)\ \Leftrightarrow\ w(x',0)=0,
\end{equation}
\begin{equation}\label{eq:branch_set_w}
(x',f(x'))\in \mathcal S_1(u)\ \Leftrightarrow\ w(x',0)=0\ \text{and}\ |\nabla w|(x',0)=0,
\end{equation}
where $\mathcal C_1(u)$ and $\mathcal S_1(u)$ are given by \eqref{e:def-C-one-phase} and \eqref{e:def-S-one-phase}.
\end{proposition}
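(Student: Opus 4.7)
The plan is to define $w$ as the composition of the change of coordinates $\Phi$ with a classical hodograph-type transform and translate the one-phase minimality of $u$ into the thin-obstacle minimality of $w$ through a careful change-of-variables computation in $J_1$. Concretely, I would set $U:=u\circ\Phi$ on $B_\delta'\times[0,\delta)$. Since $\Omega_u\subseteq\mathcal A=\Omega_m$ and $\Phi$ maps $\{t>0\}$ diffeomorphically into $\mathcal A$, the positivity set of $U$ has the graph form $\{t>\sigma(x')\}$ for a nonnegative $C^{1,\sfrac12}$ function $\sigma$ inheriting the regularity of $f$ via $\Phi$. Inverting $U$ in its second variable, I define $\tau(x',s)$ by $U(x',\tau)=s$ and set $w(x',s):=\tau(x',s)-s$, so $w(x',0)=\sigma(x')\ge 0$. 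The $C^{1,\sfrac12}$ regularity of $u$ up to $\partial\Omega_u$ and the smoothness of $\Phi$, combined with $\partial_t U=\nabla u\cdot\nabla m/|\nabla m|^2\approx 1$ near the branching point (since $|\nabla u|(0)=|\nabla m|(0)=1$ and both gradients are parallel there), make $w$ of class $C^{1,\sfrac12}$. The condition $0\in\mathcal S_1(u)$ forces $w(0)=0$ and $\nabla w(0)=0$, so after a suitable rescaling to the unit half-ball we may assume $w\in\mathcal L_{\sfrac12}$.

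Next, I would verify the correspondences \eqref{eq:contact_set_x} and \eqref{eq:branch_set_w} by direct inspection. Since $w(x',0)=\sigma(x')$, the zeros of $w$ on $B_1'$ are exactly the contact points, giving \eqref{eq:contact_set_x}. At such a zero the tangential gradient $\nabla_{x'}w(x',0)=\nabla\sigma(x')$ vanishes because $\sigma\ge 0$ attains an interior minimum; a chain-rule computation using $|\nabla m|=1$ on $\partial\Omega_m$ together with the parallelism of $\nabla u$ and $\nabla m$ at contact points then yields $\partial_s w(x',0)=1/|\nabla u|(x',\phi(x'))-1$. Hence $|\nabla w|(x',0)=0$ if and only if $|\nabla u|(x',\phi(x'))=1$, which is exactly the defining condition of $\mathcal S_1(u)$.

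The technical heart is the energy identification. I would perform the two successive changes of variable $y=\Phi(x',t)$ and $t=s+w(x',s)$ in $J_1(u,\mathcal A)$ restricted to $\Phi(B_1^+)$. The block-diagonal structure of $(D\Phi)(D\Phi)^T$ from \eqref{e:PhiProperties-Phi0}--\eqref{e:PhiProperties-orto}, together with the conservation law \eqref{e:PhiProperties-Delta-m} rewritten as $|\nabla m|^2(\Phi)\det(D\Phi)\equiv\alpha(x')$, convert the integrand of $J_1$ into the rational expression
\[
\frac{g(x',s+w)\,S(x',s+w)\nabla_{x'}w\cdot\nabla_{x'}w + \alpha(x') + g(x',s+w)(1+\partial_s w)^2}{1+\partial_s w}\,,\qquad g:=\det D\Phi\,.
\]
The definitions of $M$ in \eqref{eq:elliptic_matrix} and of $Q$ in the statement are tailored so that the degree-two Taylor part of this expression in $(w,\nabla w)$ equals the integrand of $\mathcal F$. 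The degree-$\ge 3$ pieces carrying at least one gradient factor collect, by analyticity of the coefficients, into the stated polynomial form $\sum_{k=1}^3 g_k w^{3-k}P_k(\nabla w)$ of $\mathcal E$. The purely $w$-dependent pieces without any gradient factor, coming entirely from the measure term $|\Omega_u|$, are absorbed via the total-derivative identity
\[
\int_{B_1^+}g(x',s+w)(1+\partial_s w)\,dx'ds=\int_{B_1'}\!F(x',h(x')+w(x',h(x')))\,dx'-\int_{B_1'}\!F(x',w(x',0))\,dx'\,,
\]
with $F(x',t):=\int_0^t g(x',t')dt'$ and $h(x'):=\sqrt{1-|x'|^2}$; the first summand is fixed by the Dirichlet condition on $(\partial B_1)^+$, and the second, once expanded in powers of $w(x',0)$, cancels exactly the $B_1'$-boundary contribution produced by integrating $\partial_d Q\,w\,\partial_d w$ by parts, thanks to the key vanishing $Q(x',0)=0$ forced by the conservation law.

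Finally, for the minimization I would invert the construction: given any admissible competitor $\varphi\in\mathcal L_{\sfrac12}$ with $\varphi\ge 0$ on $B_1'$ and $\varphi=w$ on $(\partial B_1)^+$, I set $\tilde\tau(x',s):=\varphi(x',s)+s$, define $\tilde U$ on $\Phi(B_1^+)$ by $\tilde U(x',\tilde\tau)=s$ (extended by zero below the image of $\{s=0\}$), and let $\tilde u:=\tilde U\circ\Phi^{-1}$ glued to $u$ outside $\Phi(B_1^+)$. The obstacle condition gives $\{\tilde u>0\}\subseteq\mathcal A$, and the Dirichlet condition makes $\tilde u$ admissible for \eqref{e:intro-onePhase-variational-problem} with the same outer data as $u$; hence $J_1(u)\le J_1(\tilde u)$, which through the energy identification translates into $\mathcal F(w)+\mathcal E(w)\le\mathcal F(\varphi)+\mathcal E(\varphi)$. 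The main obstacle is the bookkeeping in the energy identification: one must check carefully that every $w$-dependent term produced by the two changes of variable either belongs to $\mathcal F+\mathcal E$, is fixed by the Dirichlet data on $(\partial B_1)^+$, or cancels on $B_1'$ via the identity $Q(\cdot,0)=0$.
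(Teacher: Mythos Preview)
Your overall approach—composing with $\Phi$, applying the hodograph linearization $w=\tilde w-x_d$, converting $J_1$ via the two changes of variable, and inverting the construction for minimality—is exactly the paper's strategy, and your verifications of \eqref{eq:contact_set_x}–\eqref{eq:branch_set_w} are correct. However, your energy bookkeeping has a genuine gap. The cross term $\partial_d Q\,w\,\partial_d w$ in $\mathcal F$ arises \emph{from the measure term}: expanding $g(x',s+w)(1+\partial_s w)$ to second order produces $\partial_d g\,w\,\partial_s w=\partial_d Q\,w\,\partial_s w$. Once you absorb the entire measure term as the exact derivative $\partial_s F(x',s+w)$, you have removed this piece from the bulk and must recover it, which your sketch does not do. Your claimed cancellation on $B_1'$ is also misattributed: the expansion of $-F(x',w(x',0))$ has a linear term $-g(x',0)\,w(x',0)=-\alpha(x')\,w(x',0)$, and this is cancelled not by ``integrating $\partial_d Q\,w\,\partial_d w$ by parts'' but by the degree-one piece of the remaining bulk term $\alpha/(1+\partial_s w)$, namely $-\alpha\,\partial_s w=-\partial_s(\alpha w)$, whose $B_1'$-contribution is $+\alpha(x')\,w(x',0)$.

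The paper sidesteps this by completing the square \emph{before} the hodograph transform: writing $|\nabla m|^2(\partial_d v)^2+1=|\nabla m|^2(\partial_d v-1)^2+(1-|\nabla m|^2)+2|\nabla m|^2\partial_d v$ and using the conservation law \eqref{e:PhiProperties-Delta-m} to recognize $2|\nabla m|^2\det(D\Phi)\,\partial_d v=2\,\partial_d\big(|\nabla m|^2\det(D\Phi)\,v\big)$ as a null Lagrangian on $\{v>0\}$. This cleanly splits the energy into a gradient piece $\mathcal F_h$ and a boundary piece $\mathcal F_b=\int Q(x',x_d+w)(1+\partial_d w)$; Taylor expanding $Q$ in $w$ then yields $\partial_d Q\,w\,\partial_d w$ directly, while the first-order remainder $Q\,\partial_d w+\partial_d Q\,w=\partial_d(Qw)$ integrates to zero on $B_1'$ because $Q(\cdot,0)=0$. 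Your route can be repaired along the lines above, but the square-completion makes the cancellations transparent and is the missing idea in your sketch.
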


\begin{proof}
    Up to rescaling, we can take $\delta=1$ in  \cref{prop:changeofcoord}. Let $v:B_1'\times [0,1] \to \R$ be defined as
\[
v(x', x_d) \coloneqq u \left( \Phi(x', x_d) \right),
\]
where $\Phi$ is the flow from \cref{prop:changeofcoord}.
Then we can rewrite the one-phase Bernoulli functional
\begin{align*}\label{eqn:onePhaseEnergyu}
    J_1(u, &\Phi(B_1'\times[0,1]))
        =\int_{\Omega_u\cap\Phi(B_1'\times[0,1]) } \left(|\nabla u|^2+1\right)\notag\\
        &= \int_{ \{ v > 0 \}\cap(B_1'\times[0,1]) } \left(  S \nabla_{x'} v \cdot \nabla_{x'} v + \vert \nabla m \vert^2(\Phi) \vert \partial_d v \vert^2 + 1 \right) \det\left( D\Phi \right)\,\\
        &=\int_{ \{ v > 0 \}\cap(B_1'\times[0,1]) } \left(  S \nabla_{x'} v \cdot \nabla_{x'} v + \vert \nabla m \vert^2(\Phi) ( \partial_d v -1)^2 + 1 -|\nabla m|^2(\Phi)\right) \det\left( D\Phi \right)\\
        &\qquad -2\underbrace{\int_{\{v>0\}\cap (B_1'\cap[0,1])}\partial_d(|\nabla m|^2(\Phi)|\det(D\Phi)|)\,v}_{=0}\\
        &\qquad\qquad 
    +2\underbrace{\int_{\partial (B_1'\cap[0,1])\cap\{v>0\}} |\nabla m|^2(\Phi)\,v\,|\det(D\Phi)|}_{=b}\,,
\end{align*}
where $b$ is a constant depending only on the values of $v$ on $\partial (B_1'\cap[0,1])$,
and where we used the relation
\[
\vert \nabla  u \vert^2(\Phi(x',x_d)) = S(x',x_d)\, \nabla_{x'} v(x',x_d) \cdot \nabla_{x'} v(x',x_d) + \vert \nabla m \vert^2 \left( \Phi(x', x_d) \right) \vert \partial_d v(x',x_d) \vert^2\,.
\]
Now let us consider the hodograph transformation
\begin{equation}\label{eqn:changeCoordHodoTrsnsformDef}
T:B_1'\times[0,1]\to\R^{d}\ ,\qquad T(x',x_d)=(x', v(x', x_d)). 
\end{equation}
Notice that we have 
\begin{equation}\label{eq:nablas}\partial_dv=\nabla u(\Phi)\cdot \partial_d\Phi=\nabla u(\Phi)\cdot \frac{\nabla m}{|\nabla m|^2}(\Phi),
\end{equation}
therefore, since $0$ is a branching point, we have that $\partial_dv(0,0)=1$, so, by the $C^{1,\alpha}$ regularity of $u,m$, for some $\delta$ small enough, the map $T:B_\delta'\times[0,\delta]\to T(B_\delta'\times[0,\delta])$ is invertible. Once again, up to rescaling, we can assume that $\delta=1$. The inverse of $T$ is of the form
$$T^{-1}(x',x_d)=(x',\widetilde w(x',x_d)),$$
where $\widetilde w:T(B_1'\times[0,1])\to\R$ is a $C^{1,\alpha}$ function, which satisfies the following identity
\[
v \left( x', \widetilde w (x', x_d) \right) = x_d,
\]
so that, for any $(x',x_d)\in T(B_1'\times[0,1])$, we have
\begin{align*}
    \begin{split}
        & \nabla_{x'} v\left( x', \widetilde w (x', x_d) \right) + \partial_{x_d} v\left( x', \widetilde w (x', x_d) \right) \,\nabla_{x'} \widetilde w(x', x_d) = 0, \\
        & \partial_{x_d} v\left( x', \widetilde w (x', x_d) \right) \,\partial_{x_d} \widetilde w(x', x_d) = 1.
    \end{split}
\end{align*}
Let us consider the linearization $w$ of $\widetilde w$, defined by the relation
\[
\widetilde w = x_d + w.
\]
For any set $\Omega\subset T(B_1'\times[0,1])$, we can compute the one-phase  energy $J_1(u,\Phi(T^{-1}(\Omega)))$ in terms of the function $w$, by using the identities above and the fact that $\det(T^{-1})=1+\partial_dw$.
\begin{align*}
    J_1&(u, \Phi(T^{-1}(\Omega)))-b\\
       &= \int_{\Omega} \frac{\left(  S(x', x_d+w) \nabla_{x'} w \cdot \nabla_{x'} w + \vert \nabla m \vert^2 (\Phi(x', x_d+w)) |\partial_d w|^2  \right)}{1 + \partial_d w} \det\left( D\Phi (x', x_d+w)\right) \\
       &\quad +\int_\Omega (1-|\nabla m|^2(\Phi(x',x_d+w)))\, |\det(D\Phi(x',x_d+w))|\,(1+\partial_dw)\,=: \mathcal{F}_h + \mathcal{F}_b.
\end{align*}
It is immediate to check that $\mathcal F_h$ can be written in the form
\begin{align*}
\mathcal{F}_h &= \int_{\Omega} \left(  S(x', x_d+w) \nabla_{x'} w \cdot \nabla_{x'} w + \vert \nabla m \vert^2 (\Phi(x', x_d+w)) \vert \partial_d w \vert^2 \right) \frac{\det\left( D\Phi (x', x_d+w)\right) }{1 + \partial_d w}\\
&= \int_{\Omega} \Big(M(x) \nabla w \cdot \nabla w+\sum_{k=2}^3\widetilde g_k(x,w,\nabla w) \,\widetilde P_k(\nabla w)w^{3-k}\Big),
\end{align*}
where, for every $k=2,3$, $\widetilde g_k=\widetilde g_k(x,y,p)$ is an analytic function of $x\in\R^d,y\in\R$ and $p\in\R^d$ and where $\widetilde P_k(p)$ is $k$-homogeneous polynomial of $p\in\R^d$.

Next observe that 
\begin{align*}
    \mathcal F_b&=\int_\Omega \left(Q(x)+\partial_{d} Q(x) \,w+g(x,w)w^2\right)\,(1+\partial_dw)\\
    &=\int_\Omega Q(x)+\partial_d(Q(x)\,w)+\partial_dQ(x)\,w\,\partial_dw+g_2(x,w,\nabla w)\,w\,P_2(\nabla w)+g_1(x,w,\nabla w)\,w^2\,P_1(\nabla w)\,\\
    &=\int_\Omega Q(x)+\partial_d(|\det D\Phi(x)|)\,w\,\partial_dw+g_2(x,w,\nabla w)\,w\,P_2(\nabla w)+g_1(x,w,\nabla w)\,w^2\,P_1(\nabla w)\\
    &\quad -\underbrace{\int_{B_r'\cap \Omega}Q(x)\,w}_{=0}+b\,,
\end{align*}
where in the last equality we used that $Q(x)\equiv 0$ on $B_r'$ and $b$ is a number depending only on the value of $w$ on $\partial B_r$.

Conversely, if $\varphi$ is a Lipschitz function in $T(B_1'\cap[0,1])$, which is such that $|\nabla \varphi|\le \sfrac12$ in $\Omega$, $\varphi\equiv w$ outside of $\Omega$ and $\varphi\ge 0$ on $\Omega\cap \{x_d=0\}$, then, for every fixed $x'$, the function
$$x_d\mapsto x_d+\varphi(x',x_d)$$
is monotone increasing and invertible on the segment $\{(x',t)\ :\ (x',t)\in T(B_1'\cap[0,1])\}$.  
The map $\widetilde T^{-1}:T(B_1'\times[0,1])\to\R^d$ defined as
$$\widetilde T^{-1}(x',x_d):=(x',x_d+\varphi(x',x_d))\,$$
has an image $Im(\widetilde T^{-1})\subset B_1^+\cap[0,1]$. The map $\widetilde T^{-1}:T(B_1'\times[0,1])\to Im(T^{-1})$ is a bi-Lipschitz homeomorphism and the symmetric difference $Im(T^{-1})\Delta(B_1'\times[0,1])$ is contained in a neighborhood of the origin. Thus, the inverse $\widetilde T$ of $\widetilde T^{-1}$ is of the form $\widetilde T(x',x_d)=(x',\widetilde \phi(x',x_d))$, where $\widetilde \phi$ is a competitor for $v$ in a neighborhood of the origin. Since $v$ solves \eqref{e:intro-onePhase-variational-problem}, the minimality of $w$ follows.

Next notice that $(x',f(x'))\in \mathcal C_1(u)$ if and only if $0=u(x',f(x'))=m(x', f(x'))$ if and only if $\Phi(x',0)=(x',f(x'))$ and $$w(x',0)=v(x',0)=m(\Phi(x',0))=0\,,$$ where in the last equality we used \eqref{e:PhiProperties-levels-of-m}. Finally, $(x',f(x'))\in \mathcal S_1(u)$ if and only if $(x',f(x'))\in \mathcal C_1(u)$ and $|\nabla u(x',f(x'))|=1$ if and only if $\Phi(x',0)=(x',f(x'))$, $w(x',0)=0$ and 
\[
\partial_d w(x',0)=\partial_d v(x',0)-1=\nabla u(x',f(x'))\cdot \frac{\nabla m(x',f(x'))}{|\nabla m|^2}(x',f(x'))-1=0\,,
\]
where in the last equality we used that $\nabla u(x',f(x'))=\nabla m(x',f(x'))$ and \eqref{eq:nablas}.
\end{proof}
To conclude this section, we notice that from the construction in the proof of \cref{thm:thinobs} we have the following
\begin{corollary}[Equivalence of the $C^{1, \alpha}$ estimates]\label{corollary:C1auwEquivalence}
Let $\alpha \in (0, \sfrac12)$, the functions $u$, $m$, $\phi$, $f$ and $w$ be as in Theorem \ref{thm:onePhaseAnalyticObstacle} and let $0\in \mathcal S_1(u)$. Then, there exist constants $\delta_0= \delta_0(d) > 0$ and $c = c(d,\delta_0)>0$ such that, if $\delta\in(0,\delta_0)$ and 
\[
\|u - x_d\|_{C^{1, \alpha}\left( \Omega_u \cap B_1 \right)} + \|f\|_{C^{1,\alpha}(B'_1)} \le \delta \quad \text{and} \quad \|m - x_d\|_{C^{3, \alpha}\left( B_1 \right)} + \|\phi\|_{C^{3,\alpha}(B'_1)} \le \delta
\]
then the following implications hold:
\begin{itemize}
    \item[(i)] 
    \[
    \| u - m \|_{C^{1, \alpha}\left( \Omega_u \cap B_1 \right)} + \| f - \phi \|_{C^{1, \alpha}\left( B_1' \right)} \le c \delta \implies \| w \|_{C^{1, \alpha}\left( T\left( \Omega_u \cap B_1 \right) \right)} \le \delta,
    \]

    \item[(ii)]
    \[
    \| w \|_{C^{1, \alpha}\left( T\left( \Omega_u \cap B_1 \right) \right)} \le c \delta \implies \| u - m \|_{C^{1, \alpha}\left( \Omega_u \cap B_1 \right)} + \| f - \phi \|_{C^{1, \alpha}\left( B_1' \right)} \le \delta,
    \]
\end{itemize}
where the function $T$ is the hodograph transformation defined in \eqref{eqn:changeCoordHodoTrsnsformDef}.
\end{corollary}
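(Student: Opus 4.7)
The plan is to track how the change of coordinates $\Phi$, the hodograph map $T$, and the linearization $\widetilde w = x_d + w$ propagate $C^{1,\alpha}$ norms. Under the ambient smallness $\|m - x_d\|_{C^{3,\alpha}} + \|\phi\|_{C^{3,\alpha}} \le \delta \le \delta_0$, every map appearing in the construction of $w$ is a small $C^{1,\alpha}$-perturbation of an identity-like object, so the inverse and implicit function theorems furnish the desired comparisons with constants depending only on $d$ and $\delta_0$.

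First I would establish that the flow map $\Phi$ from \cref{prop:changeofcoord} is a $C^{2,\alpha}$-diffeomorphism with
\[
\|\Phi - \mathrm{Id}\|_{C^{2,\alpha}} \le C\bigl(\|m - x_d\|_{C^{3,\alpha}} + \|\phi\|_{C^{3,\alpha}}\bigr) \le C\delta,
\]
for some $C = C(d)$. This follows from standard continuous-dependence estimates for the ODE $\partial_t \Phi = \nabla m/|\nabla m|^2 \circ \Phi$ with initial datum $(x',\phi(x'))$, using that the vector field $\nabla m/|\nabla m|^2$ is $C^{2,\alpha}$-close to $e_d$ because of the $C^{3,\alpha}$-smallness of $m - x_d$. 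The $C^{2,\alpha}$ inverse function theorem then gives the same bound for $\Phi^{-1}$, once $\delta_0$ is small enough.

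Next I would relate $v := u \circ \Phi$ to $u - m$. The key identity $m \circ \Phi = x_d$ from \eqref{e:PhiProperties-levels-of-m} gives
\[
v - x_d = (u - m) \circ \Phi,
\]
and the chain rule with the bounds on $\Phi$ and $\Phi^{-1}$ yields
\[
\|v - x_d\|_{C^{1,\alpha}} \,\sim\, \|u - m\|_{C^{1,\alpha}(\Omega_u \cap B_1)},
\]
with both implicit constants depending only on $d$ and $\delta_0$. In particular $\partial_d v$ is uniformly close to $1$, so the hodograph $T(x',x_d) = (x',v(x',x_d))$ is a $C^{1,\alpha}$-diffeomorphism close to the identity, and the standard $C^{1,\alpha}$ inverse function theorem produces $T^{-1}(x',x_d) = (x', x_d + w(x',x_d))$ with
\[
\|w\|_{C^{1,\alpha}(T(\Omega_u \cap B_1))} \,\sim\, \|v - x_d\|_{C^{1,\alpha}},
\]
again with constants depending only on $d$ and $\delta_0$.

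Finally I would relate the trace $w(\cdot,0)$ to $f-\phi$, which accounts for the free-boundary part of the norm. Since $T^{-1}$ sends $\{x_d = 0\}$ onto $\{v = 0\}$ and $\Phi$ sends $\{v = 0\}$ onto $\mathrm{graph}(f)$, the point $\Phi(x', w(x',0))$ lies on $\mathrm{graph}(f)$ while $\Phi(x',0) = (x',\phi(x'))$ by \cref{prop:changeofcoord}. Writing
\[
\Phi(x',t) = \bigl(x' + t\,a(x',t),\ \phi(x') + t\,b(x',t)\bigr)
\]
with $a,b \in C^{1,\alpha}$ of size $O(\delta)$ (and $b(x',0)$ uniformly close to $1$ because $\nabla m \approx e_d$), the relation
\[
\phi(x') + w(x',0)\,b\bigl(x', w(x',0)\bigr) = f\!\left( x' + w(x',0)\,a\bigl(x', w(x',0)\bigr) \right)
\]
can be solved by the implicit function theorem, producing $w(\cdot,0) = (f-\phi) + O\!\bigl(\delta\,\|f-\phi\|_{C^{1,\alpha}}\bigr)$ and hence
\[
\|w(\cdot,0)\|_{C^{1,\alpha}(B_1')} \,\sim\, \|f - \phi\|_{C^{1,\alpha}(B_1')}.
\]
Combining this trace estimate with the bulk estimate yields both implications (i) and (ii), once $\delta_0$ and $c$ are chosen so as to absorb the higher-order remainders.

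The main obstacle is only the careful bookkeeping in this last step: one must verify that the inversion of the implicit relation for $w(\cdot,0)$ produces constants depending solely on $d$ and $\delta_0$, which requires $\delta_0$ to be small enough that $b(x',0)$ stays uniformly bounded away from $0$ and the remainders can be absorbed into the left-hand side. Apart from that, the argument is routine chain-rule plus inverse/implicit function theorem, since every object involved is a $C^{1,\alpha}$-perturbation of a linear situation.
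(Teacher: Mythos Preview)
Your proposal is correct and is exactly the argument the paper has in mind: the paper gives no proof at all beyond the sentence ``from the construction in the proof of \cref{thm:thinobs} we have the following,'' so what you wrote is precisely the routine bookkeeping that is being left to the reader. The key identity $v-x_d=(u-m)\circ\Phi$ (from $m\circ\Phi=x_d$) is the heart of the matter and you identified it; the rest is, as you say, chain rule plus the $C^{1,\alpha}$ inverse/implicit function theorems. One small wording point: in your expansion $\Phi(x',t)=(x'+t\,a,\ \phi(x')+t\,b)$ you should say $a$ and $b-1$ are $O(\delta)$ in $C^{1,\alpha}$ (not $b$ itself), which is what you actually use when you invoke $b(x',0)$ bounded away from zero.
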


\section{$L^\infty$-$L^2$ estimates and unique continuation}\label{section:L2LinftyEstimates}
In this section we will consider minimizers of $\mathcal G(\cdot, 1):=\mathcal F(\cdot, 1)+\mathcal E(\cdot, 1)$ and of the associated linear energy $\mathcal F(\cdot, 1)$ in the class of competitors
\[
\mathcal A:=\left\{\varphi \in \mathcal{L}_{\sfrac12} \,:\, \varphi \ge 0 \text{ on } B_1' \text{ and } \varphi = w  \text{ on } \partial B_1^+ \right\}
\]
In the sequel we will omit the $1$ in the energies. In particular we will be interested in proving a $C^{1}$-$L^2$ estimate for minimizers and moreover we will need a suitable unique continuation type property.

The aim of this section is to adapt the existing $C^{1, \alpha}$ regularity theory for nonlinear thin obstacle problems of the form \eqref{eqn:onePhaseNonlinearThinObstacleVariationalIneq} to obtain a quantitative estimate, and to provide a useful quantitative unique continuation property for such solutions under a doubling assumption. 

Given \eqref{corollary:C1auwEquivalence}, in the following of this paper it is not restrictive to work under the following
\begin{assumptions}[A priori $C^{1, \alpha}$ estimate]\label{assumptions:wC1aAprioriEst}
    Let the constants $\alpha$ and $\delta_0$ as in \cref{corollary:C1auwEquivalence}. Moreover, we suppose that the function $w:B_1^+ \to \R$ in non-trivial and satisfies 
    \[
    \| w \|_{C^{1, \alpha}\left( \overline{B_1^+} \right)} \le \delta,
    \]
    for some $\delta\in(0,\delta_0)$.
\end{assumptions}
Moreover, by a rotation of the coordinate system, we can also make use of the following
\begin{assumptions}[Branching condition]\label{assumptions:startingAssumptions}
    The origin is a contact point of the free boundaries and $\nabla m(0) = e_d$. In particular,
    \[
    w(0) = 0 \quad \text{and} \quad \nabla w(0) = 0.
    \]
\end{assumptions}
\begin{remark}[Non-degeneracy]\label{remark:wNonDegeneracy}
    Let the function $w:B_1 \to \R$ be a solution of \eqref{eqn:thinObstacleRobinRegularity} under \cref{assumptions:wC1aAprioriEst} and \cref{assumptions:startingAssumptions}. Then, 
    \begin{equation*}
    \int_{B_r^+} w^2 > 0 \quad \text{and} \quad \int_{B_r^+} \vert \nabla w \vert^2 > 0 \quad \text{for all } r \in (0, 1).
    \end{equation*}
    Indeed, suppose that either one fails. Then, $w \equiv 0$ in $B_r^+$, since $w(0) = 0$. On the other hand, \cref{corollary:C1auwEquivalence} would imply
    \[
    \| u - m \|_{C^{1, \alpha}\left( \Omega_u \cap B_{r_0} \right)} + \| f - \phi \|_{C^{1, \alpha}\left( B_{r_0}' \right)} = 0
    \]
    for some radius $r_0>0$ sufficiently small. However, as a consequence of the unique continuation property for harmonic functions
    \[
    \partial \Omega_u \cap B_1 \equiv \mathrm{graph}(\phi) \cap B_1 \quad \text{and} \quad u \equiv m \text{ in } \Omega _u \cap B_1, 
    \]
    which is in contradiction with the non-triviality of $w$.
\end{remark}

\subsection{$L^\infty$-$L^2$ estimates}
We have the following result, whose proof is postponed to \cref{appendix:C1aRegularityNonlinearSignorini}.
\begin{proposition}[$C^{1, \alpha}$ estimate]\label{prop:nonlinearThinObstacleC1aRegularity}
    Let the function $w:B_1 \to \R$ be a solution of \eqref{eqn:thinObstacleRobinRegularity} under \cref{assumptions:wC1aAprioriEst}. Then, there exists a constant $C = C(d, \delta_0)>0$ such that
    \[
    \| w \|_{C^{0, \alpha}\left( B_{r/2}(x) \cap B_1^+ \right)} \le C \left( \fint_{B_r(x) \cap B_1^+} w^2 \right) \quad \text{and} \quad \| \nabla w \|_{C^{0, \alpha}\left( B_{r/2}(x) \cap B_1^+ \right)} \le \frac{C}{r} \left( \fint_{B_r(x) \cap B_1^+} w^2 \right)
    \]
    for all balls $B_r(x)$ with $B_r(x)^+ \subset B_1^+$.
\end{proposition}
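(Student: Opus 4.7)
The plan is to reduce this quantitative $L^\infty$--$L^2$ type estimate to the qualitative $C^{1,\alpha}$ regularity for the linearized thin-obstacle problem, which is the object of the appendix, through a standard scaling and compactness argument.

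\textbf{Scaling reduction.} First I would perform the change of variable $\tilde w(y) := w(x+ry)$ for a ball $B_r(x)$ with $B_r(x)\cap B_1^+ \subset B_1^+$. The rescaled $\tilde w$ minimizes an analogous energy on $B_1^+$ with matrix $\tilde M(y)=M(x+ry)$, boundary coefficient $\partial_d\tilde Q(y)=r\,\partial_d Q(x+ry)$, and an analogous cubic remainder $\tilde{\mathcal E}$. The analyticity and ellipticity bounds on $M$ and $\partial_d Q$, together with the homogeneity structure of $\mathcal E$ (each term has total order at least $3$ in $(w,\nabla w)$), are preserved uniformly for $r\le 1$. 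Hence it suffices to establish
\[
\|w\|_{C^{0,\alpha}(B_{1/2}^+)} + \|\nabla w\|_{C^{0,\alpha}(B_{1/2}^+)} \le C\Big(\fint_{B_1^+} w^2\Big)^{1/2}.
\]

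\textbf{Compactness and contradiction.} Next I would argue by contradiction. Assume a sequence $\{w_k\}$ of minimizers satisfying \cref{assumptions:wC1aAprioriEst} with $\lambda_k:=\bigl(\fint_{B_1^+} w_k^2\bigr)^{1/2}>0$ and
\[
\|w_k\|_{C^{0,\alpha}(B_{1/2}^+)} + \|\nabla w_k\|_{C^{0,\alpha}(B_{1/2}^+)} > k\,\lambda_k.
\]
The a priori $C^{1,\alpha}$ bound from \cref{assumptions:wC1aAprioriEst} forces $\lambda_k\to 0$. Setting $\tilde w_k:=w_k/\lambda_k$, one has $\|\tilde w_k\|_{L^2(B_1^+)}=1$, and $\tilde w_k$ minimizes the functional obtained from $\mathcal F+\mathcal E$ under the substitution $w=\lambda_k\tilde w$. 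The crucial point is that since each term in $\mathcal E$ has total homogeneity at least $3$ in $(w,\nabla w)$, the rescaled nonlinear part carries an extra factor of $\lambda_k$, and therefore vanishes in the limit. Combining the ellipticity of $\tilde M$ with a Caccioppoli-type inequality (the a priori $C^{1,\alpha}$ bound lets one absorb the small nonlinear terms), I would obtain a uniform $H^1(B_{3/4}^+)$ bound on $\tilde w_k$; up to a subsequence, $\tilde w_k\rightharpoonup \tilde w_\infty$ in $H^1$ and strongly in $L^2$, with $\|\tilde w_\infty\|_{L^2(B_1^+)}=1$, and $\tilde w_\infty$ minimizes the linear functional $\mathcal F$ in the class of traces non-negative on $B_1'$.

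\textbf{The main obstacle: linearized theory.} The heart of the argument is the $C^{1,\alpha}$ regularity for this linearized thin-obstacle problem with analytic matrix $M$ and the Robin-like contribution from $\partial_d Q\,w\,\partial_d w$, giving an estimate
\[
\|\tilde w_\infty\|_{C^{0,\alpha}(B_{1/2}^+)} + \|\nabla\tilde w_\infty\|_{C^{0,\alpha}(B_{1/2}^+)} \le C\|\tilde w_\infty\|_{L^2(B_1^+)}.
\]
This is exactly what the appendix is designed to prove, following the strategy of Athanasopoulos--Caffarelli and De Silva for the Signorini problem: compare the solution at a boundary contact point with the half-space model $(x_d)_+$ and iterate via a dichotomy / frequency-like argument to obtain $C^{1,\alpha}$ regularity at the boundary, then combine with interior Schauder bounds. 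Once this is available, a standard $\varepsilon$-regularity / perturbation argument propagates the bound from $\tilde w_\infty$ to $\tilde w_k$ for $k$ large, yielding a uniform $C^{0,\alpha}$ bound on $\tilde w_k$ and $\nabla\tilde w_k$ that contradicts the assumed divergence. I expect the implementation of the perturbation/dichotomy step in the presence of the non-standard Robin-like boundary term (rather than a pure Signorini condition) to be the most delicate point, since the usual proofs rely on comparison with $(x_d)_+$-type profiles that no longer solve the boundary condition exactly; one must absorb the $\partial_d Q\,w\,\partial_d w$ contribution by analyticity and smallness of the coefficient $\partial_d Q$ near the branching point.
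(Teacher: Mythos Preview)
Your overall plan---normalize by the $L^2$ norm, use the a~priori $C^{1,\alpha}$ smallness to kill the cubic remainder $\mathcal E$, and reduce to a variable-coefficient Signorini problem with a Robin-type boundary term---is the right picture, and it is close to what the paper does. The genuine gap is in your final ``propagation'' step. From weak $H^1$ and strong $L^2$ convergence of $\tilde w_k$ to $\tilde w_\infty$ you cannot directly conclude that $\|\tilde w_k\|_{C^{1,\alpha}(B_{1/2}^+)}$ stays bounded; upgrading the convergence to $C^{1,\alpha}$ is precisely the estimate you are trying to prove, so the argument as written is circular. Saying that ``a standard $\varepsilon$-regularity/perturbation argument'' transfers the bound from $\tilde w_\infty$ back to $\tilde w_k$ hides the whole difficulty: for thin-obstacle problems there is no off-the-shelf statement of the form ``$L^2$-close to a $C^{1,\alpha}$ solution $\Rightarrow$ $C^{1,\alpha}$ with comparable constants'', because the contact sets can move.

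The paper organizes the compactness so that only $L^2$-level information is needed at the contradiction step. First it proves a quantitative $C^{0,\alpha}$ bound (Lemma in the appendix) by De Giorgi--Nash--Moser/Caccioppoli on sub- and superlevel sets---no blow-up at all---which already gives enough compactness to pass to limits. Then, at branching points, it proves not the $C^{1,\alpha}$ norm estimate directly but the $L^2$ \emph{decay} estimate $\|w\|_{L^2(B_r^+)}\le Cr^{1+\beta}\|w\|_{L^2(B_1^+)}$ via the Ruland--Shi linearization: one selects the first scale at which the decay fails, blows up there, and obtains a \emph{global} solution of the harmonic thin obstacle with growth $\lesssim R^{1+\beta}$, which a Liouville theorem forces to be affine---contradiction. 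Because the quantity under contradiction is an $L^2$ ratio, the available strong $L^2$ convergence suffices. The full Proposition then follows by combining this decay at contact points with interior Schauder and boundary Robin regularity through a projection argument. If you want to salvage your approach, you should replace the single global compactness step by this scale-selection/decay mechanism; the one-shot ``limit has finite $C^{1,\alpha}$ norm, hence so does the sequence'' does not close.
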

There are several contributions in the literature concerning the regularity of solutions for nonlinear thin obstacle problems. Concerning the H\"{o}lder regularity we refer for instance to \cite{BeiraoDaVeigaConti1972:ThinObstacleC0a}, the Lipschitz continuity was first addressed in \cite{GiaquintaModica1975:LipschitzRegularityNonlinearObstacles}, while the continuity of the first derivatives was first proved in \cite{Frehse1977:NonlinearThinObstacleC1}. The $C^{1, \alpha}$ regularity in the nonlinear setting, on the other hand, is much more recent and has been proved in \cite{DiFazioSpadaro2022:NonlinearThinObstacleC1a}.

For our purpose, since we already have an a priori $C^{1, \alpha}$ estimate arising from the improvement of flatness for the Bernoulli problem, it will suffice to adapt the linearization method developed in \cite{RulandShi:C1aThinObstacleBlowUp} for (linear) thin obstacle problems with H\"{o}lder continuous coefficients. We refer to \cref{appendix:C1aRegularityNonlinearSignorini} for the details of the proof.

\subsection{Localized unique continuation}

In order to prove the monotonicity of the frequency function we will need to estimate quantities at the same scale (this is the key point in the proof of \cref{lemma:energyEstCubesHD} below).  This will be possible thanks to the following unique continuation type estimates for solutions of \eqref{eqn:thinObstacleRobinRegularity} under a doubling assumptions. 
\begin{lemma}[Three annuli-type lemma I]\label{lemma:excessOnePhaseThreeAnnuliLemma}
For every constants $\eta \in (0, 1)$ and $C > 0$, there exists $\gamma = \gamma(d, \eta, C, \delta_0) > 0$ and $r_0 = r_0(d, \eta, C, \delta_0) > 0$ with the following property.
Suppose that the function $w$ is a solution of \eqref{eqn:thinObstacleRobinRegularity} under \cref{assumptions:wC1aAprioriEst} and that
\begin{equation}\label{eqn:excessDecayPropertyThreeAnnuli}
    \int_{B_{3r}(x) \cap B_1^+}|\nabla w|^2 \le C \int_{B_{r}(x) \cap B_1^+}|\nabla w|^2,
\end{equation}
\begin{equation}\label{eqn:heightBoundThreeAnnuli}
    \int_{B_{3r}(x) \cap B_1^+} w^2\leq C r^2 \int_{B_{r}(x) \cap B_1^+} |\nabla w|^2,
\end{equation}
for some $r<r_0$ and $x \in \overline{B_{1/3}^+}$. 
Then, for all $y \in B_1^+$ such that $B_{\eta r}(y) \subseteq B_{r}(x)$
\begin{equation}\label{eqn:excess3AnnuliLemmaCubes}
\int_{B_{\eta r}(y) \cap B_1^+}|\nabla w|^2 \ge \gamma \int_{B_{3 r}(x) \cap B_1^+}|\nabla w|^2.
\end{equation}
\end{lemma}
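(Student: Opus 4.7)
The plan is a blow-up argument by contradiction, reducing the claim to strong unique continuation for an essentially linear limiting thin-obstacle problem. Assume the lemma fails: for some $\eta,C$ one extracts a sequence $w_k$ of solutions of \eqref{eqn:thinObstacleRobinRegularity} under \cref{assumptions:wC1aAprioriEst}, radii $r_k\downarrow 0$, centers $x_k\in\overline{B_{1/3}^+}$, points $y_k$ with $B_{\eta r_k}(y_k)\subseteq B_{r_k}(x_k)$, and $\gamma_k\downarrow 0$, such that the doubling conditions \eqref{eqn:excessDecayPropertyThreeAnnuli}--\eqref{eqn:heightBoundThreeAnnuli} hold while
\[
\int_{B_{\eta r_k}(y_k)\cap B_1^+}|\nabla w_k|^2 < \gamma_k\int_{B_{3r_k}(x_k)\cap B_1^+}|\nabla w_k|^2 .
\]
Rescale by setting $\tilde w_k(z):=s_k^{-1}w_k(x_k+r_k z)$ on $\Omega_k:=r_k^{-1}(B_1^+-x_k)$ with $s_k^2:=r_k^{2-d}\int_{B_{3r_k}(x_k)\cap B_1^+}|\nabla w_k|^2$. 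A direct change of variables together with the two doubling hypotheses yields
\[
\int_{B_3\cap\Omega_k}|\nabla \tilde w_k|^2=1,\qquad \int_{B_3\cap\Omega_k}\tilde w_k^2\le C,\qquad \int_{B_1\cap\Omega_k}|\nabla \tilde w_k|^2\ge \tfrac{1}{C},
\]
while the failure of the conclusion translates into $\int_{B_\eta(\tilde y_k)\cap\Omega_k}|\nabla \tilde w_k|^2\to 0$, where $\tilde y_k:=r_k^{-1}(y_k-x_k)$.

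Next, by \cref{prop:nonlinearThinObstacleC1aRegularity} applied at scale $r_k$ and pulled back through the rescaling, the family $\tilde w_k$ is uniformly bounded in $C^{1,\alpha}_{\mathrm{loc}}$. Passing to a subsequence, $x_k\to x_\infty$, $\tilde y_k\to \tilde y_\infty$, $\Omega_k\to\Omega_\infty$ (either all of $B_3$ or a translated half-space $\{z_d>-t_\infty\}$, depending on whether $(x_k)_d/r_k$ diverges or converges to some $t_\infty\ge 0$), and $\tilde w_k\to \tilde w_\infty$ in $C^1_{\mathrm{loc}}$. The rescaled coefficients $M(x_k+r_k\cdot)$ and $\partial_d Q(x_k+r_k\cdot)$ converge to constants, while a bookkeeping of scales shows that the three nonlinear error prefactors coming from $\mathcal E$ are $s_kr_k$, $s_k$, and $s_k/r_k$, each controlled by $C\delta_0$ thanks to the a priori Lipschitz bound on $w_k$. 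Choosing $\delta_0$ small enough, the limit $\tilde w_\infty$ is a minimizer of an autonomous, essentially linear thin-obstacle functional on $\Omega_\infty$ (with a Signorini--Robin condition on the flat part, if any), is constant on $B_\eta(\tilde y_\infty)\cap\Omega_\infty$, and satisfies $\int_{B_1\cap\Omega_\infty}|\nabla \tilde w_\infty|^2\ge 1/C$.

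The contradiction follows from strong unique continuation for the limit equation: the classical Aronszajn--Carleman theorem in the interior case, and the boundary unique continuation for linear Signorini problems (cf.\ \cite{FocardiSpadaro2018:MeasureFreeBdThinObstacle}) in the boundary case. In either situation $\tilde w_\infty$ must be constant on the connected set $B_3\cap\Omega_\infty$, contradicting the lower gradient bound on $B_1\cap\Omega_\infty$. The main obstacle is that the prefactor $s_k/r_k$ in front of the cubic-in-gradient error term does not vanish with $r_k$ but is only a priori controlled by $\delta_0$; hence $\delta_0=\delta_0(d,\eta,C)$ must be chosen small enough that the limit is a sufficiently small perturbation of the linear thin-obstacle problem for strong unique continuation still to apply. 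A secondary subtlety is the uniform treatment of the interior versus boundary blow-up regimes, according to whether $(x_k)_d/r_k$ diverges or stays bounded, and in particular the case where $B_{\eta r_k}(y_k)$ touches $B_1'$.
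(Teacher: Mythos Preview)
Your approach is the same compactness-and-contradiction argument as the paper's, and you have correctly isolated a point the paper's proof glosses over: after rescaling and normalizing, the cubic-in-gradient part of $\mathcal E$ carries the prefactor $s_k/r_k\le\|\nabla w_k\|_{L^\infty}\le\delta$, which is bounded but need not tend to zero, so the limit variational problem is not obviously the \emph{linear} thin-obstacle problem.

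However, your proposed resolution has a genuine gap. You write that ``$\delta_0=\delta_0(d,\eta,C)$ must be chosen small enough'' so that strong unique continuation applies to the perturbed limit. This is not allowed: in the statement of the lemma $\delta_0$ is an input and $\gamma,r_0$ are outputs depending on it, not the other way around. Moreover the dependence you introduce is spurious, since $s_k/r_k\le\delta$ is a bound involving only the a~priori constant from \cref{assumptions:wC1aAprioriEst}, not $\eta$ or $C$. Finally, invoking ``strong unique continuation for small perturbations of the linear Signorini problem'' is both vague (Aronszajn--Carleman is a statement about linear operators with Lipschitz coefficients, not about nonlinear perturbations) and unnecessary: what you need is the much weaker statement that a solution which is \emph{constant on an open set} is constant everywhere.

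The fix is simpler than you suggest and requires no smallness beyond ellipticity. Whatever the limit functional is, in the open region $\{x_d>0\}$ the limit $\tilde w_\infty$ is a $C^{1,\alpha}$ weak solution of a uniformly elliptic PDE with \emph{analytic} dependence on $(x,\tilde w_\infty,\nabla\tilde w_\infty)$; by interior elliptic regularity it is therefore real-analytic in $\{x_d>0\}\cap B_3$. Since $B_\eta(\tilde y_\infty)\cap\Omega_\infty$ always meets $\{x_d>0\}$ in a nonempty open set, analyticity immediately gives $\tilde w_\infty\equiv\mathrm{const}$ on the connected set $\{x_d>0\}\cap B_3\cap\Omega_\infty$, hence on its closure by continuity. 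This handles both the ``interior'' and ``boundary'' blow-up regimes at once, without any appeal to boundary unique continuation for Signorini-type problems and without any further restriction on $\delta_0$.
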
 
\begin{proof}
    By contradiction, suppose \eqref{eqn:excess3AnnuliLemmaCubes} fails for a sequence of constants $\gamma_n \to 0$, minimizers $w_n$ and balls $B_{r_n}(x_n)$ and $B_{\eta r_n}(y_n)$, with radii $r_n \to 0$ and centers $x_n$ and $y_n$ respectively. Up to a subsequence, we can assume that $x_n \to x_{\infty}$. We distinguish two cases:
    \begin{enumerate}
        \item $x_{\infty} \in \{ x_d = 0 \}$. Considering the functions
    \[
    \widetilde{w}_n \coloneqq \frac{1}{r_n} w_n\left( r_n \left(x - x_n \right) \right) \text{ on } B_3^+
    \]
    we can rewrite \eqref{eqn:excessDecayPropertyThreeAnnuli} and \eqref{eqn:heightBoundThreeAnnuli} respectively as
    \begin{equation}\label{eqn:rescaledEstimatesThreeAnnuli}
     \int_{B_3} \vert \nabla \widetilde{w}_n \vert^2 \le C  \int_{B_1} \vert \nabla \widetilde{w}_n \vert^2 \quad \text{and} \quad \int_{B_3} \widetilde{w}_n^2 \le C  \int_{B_1} \vert \nabla \widetilde{w}_n \vert^2.
    \end{equation}
    Now, consider their normalizations
    \[
    w'_n \coloneqq \frac{\widetilde{w}_n}{\int_{B_3} \vert \nabla \widetilde{w}_n \vert^2}.
    \]
    As a consequence of the estimates \eqref{eqn:rescaledEstimatesThreeAnnuli}, we can apply the $C^{1, \alpha}$ estimate from \cref{prop:nonlinearThinObstacleC1aRegularity} to deduce the bound
    \[
    \| w'_n \|_{C^{1, \alpha}\left( \overline{B_{3/2}^+} \right)} \le C
    \]
    uniformly in $n$. Hence, the functions $w'_n$ converge weakly in $H^1\left(B_{3/2}^+\right)$ and strongly in $C^{1, \alpha}\left(\overline{B_{3/2}^+}\right)$ to a limit function $w'_{\infty}$, which satisfies:
    \begin{enumerate}
        \item[(i)] $w'_{\infty}$ is an $H^1$ solution of a constant coefficients thin obstacle problem in $B_{3/2}^+$;

        \item[(ii)] $w'_{\infty}$ is nontrivial, since
        \[
        \int_{B_1^+} \vert \nabla w'_{\infty} \vert^2 = 1;
        \]

        \item[(iii)] by the contradiction argument
        \[
        \int_{B_{\eta}(y_{\infty})^+} \vert \nabla w'_{\infty} \vert^2 = 0.
        \]
        where $y_{\infty}$ is the limit of the points $y_n$ in the blow-up scale $r_n$.
        \end{enumerate}
        From (iii) we have that $w'_{\infty}$ is constant on $B_{\eta}(y_{\infty})^+$, and by (i) it is constant on the whole $B_{3/2}^+$, which is in contradiction with (ii).

    \item $x_{\infty} \notin \{ x_d = 0 \}$. In this case one can proceed analogously as in the previous one, with the only difference that this time the centers $x_n$ do not approach $\{ x_d = 0 \}$ so that the limit $w'_{\infty}$ is harmonic in the whole ball $B_3$.\qedhere
    \end{enumerate}
\end{proof}
Now we introduce a lemma similar to \cref{lemma:excessOnePhaseThreeAnnuliLemma}, but with respect to the $L^2$ norm of the function $w$.
\begin{lemma}[Three annuli-type lemma II]\label{lemma:heightOnePhaseThreeAnnuliLemma}
Let the constants $\eta, C, \gamma, r_0$ be as in \cref{lemma:excessOnePhaseThreeAnnuliLemma}. Suppose that the function $w$ is a solution of \eqref{eqn:thinObstacleRobinRegularity} under \cref{assumptions:wC1aAprioriEst} and that
\begin{equation*}
    r^2 \int_{B_{3r}(x) \cap B_1^+}|\nabla w|^2 \le C \int_{B_{r}(x) \cap B_1^+}w^2,
\end{equation*}
\begin{equation*}
    \int_{B_{3r}(x) \cap B_1^+} w^2\leq C \int_{B_{r}(x) \cap B_1^+} w^2,
\end{equation*}
for some $r<r_0$ and $x \in \overline{B_{1/3}^+}$. 
Then, for all $y \in B_1^+$ such that $B_{\eta r}(y) \subseteq B_{r}(x)$
\begin{equation*}\label{eqn:height3AnnuliLemmaCubes}
\int_{B_{\eta r}(y) \cap B_1^+}w^2 \ge \gamma \int_{B_{3 r}(x) \cap B_1^+}w^2.
\end{equation*}
\end{lemma}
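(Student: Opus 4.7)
The plan is to mirror almost verbatim the contradiction argument used for \cref{lemma:excessOnePhaseThreeAnnuliLemma}, replacing the Dirichlet energy by the $L^2$ norm of the function at every step of the blow-up and normalization. Specifically, I argue by contradiction: assume the conclusion fails for a sequence of constants $\gamma_n \to 0$, minimizers $w_n$ of \eqref{eqn:thinObstacleRobinRegularity} satisfying \cref{assumptions:wC1aAprioriEst}, radii $r_n \to 0$, centers $x_n \in \overline{B_{1/3}^+}$ with $x_n \to x_\infty$, and points $y_n$ with $B_{\eta r_n}(y_n) \subseteq B_{r_n}(x_n)$, $(y_n - x_n)/r_n \to y_\infty$ and $B_\eta(y_\infty) \subseteq B_1$.

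First I rescale $\widetilde{w}_n(z) := r_n^{-1} w_n(x_n + r_n z)$ on the appropriate (half) ball, so that the two hypotheses of the lemma become
\[
\int_{B_3} |\nabla \widetilde{w}_n|^2 \le C \int_{B_1} \widetilde{w}_n^2, \qquad \int_{B_3} \widetilde{w}_n^2 \le C \int_{B_1} \widetilde{w}_n^2,
\]
(with the half-ball restriction whenever $x_\infty \in \{x_d = 0\}$). I then normalize in $L^2$ instead of $H^1$, setting $w_n' := \widetilde{w}_n / \bigl(\int_{B_3} \widetilde{w}_n^2\bigr)^{1/2}$. This yields $\int_{B_3}(w_n')^2 = 1$, $\int_{B_3}|\nabla w_n'|^2 \le C$, and crucially $\int_{B_1}(w_n')^2 \ge 1/C$.

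Next, I apply \cref{prop:nonlinearThinObstacleC1aRegularity}, which is phrased directly in terms of the $L^2$ norm and therefore yields a uniform bound $\|w_n'\|_{C^{1,\alpha}(\overline{B_{3/2}^+})} \le C$. Up to a subsequence, $w_n' \to w_\infty'$ weakly in $H^1$ and strongly in $C^{1,\alpha'}$ for any $\alpha' < \alpha$. Under the blow-up, the coefficients $M$ and $\partial_d Q$ of $\mathcal F$ converge to their constant values at $x_\infty$, while the cubic error terms in $\mathcal E$ scale with higher powers of $w_n$ and $\nabla w_n$ (small by \cref{assumptions:wC1aAprioriEst}) and hence disappear in the limit. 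Thus $w_\infty'$ is a solution of a constant-coefficient linear thin-obstacle problem with Robin datum on $B_{3/2}^+$ when $x_\infty \in \{x_d=0\}$, and a constant-coefficient linear elliptic equation on the whole $B_{3/2}$ in the interior case, exactly as in \cref{lemma:excessOnePhaseThreeAnnuliLemma}.

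Finally, the negation of the conclusion, $\int_{B_{\eta r_n}(y_n)} w_n^2 < \gamma_n \int_{B_{3 r_n}(x_n)} w_n^2$, after rescaling and normalization translates to $\int_{B_\eta(y_\infty)}(w_\infty')^2 = 0$, so $w_\infty'$ vanishes on an open set; meanwhile the bound $\int_{B_1}(w_n')^2 \ge 1/C$ passes to the limit and ensures non-triviality. Since $w_\infty'$ satisfies a linear elliptic equation with constant coefficients in the open set $\{x_d > 0\}$, the classical Aronszajn unique continuation principle propagates the vanishing from $B_\eta(y_\infty)$ to the whole connected domain $B_{3/2}^+$ (respectively $B_{3/2}$), giving the desired contradiction. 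The only step that requires attention beyond a straightforward adaptation of \cref{lemma:excessOnePhaseThreeAnnuliLemma} is the identification of the constant-coefficient limit problem; this is routine given the $C^{1,\alpha}$ compactness coming from \cref{prop:nonlinearThinObstacleC1aRegularity} and the explicit form of $\mathcal E$ provided in \cref{thm:thinobs}.
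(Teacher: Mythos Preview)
The proposal is correct and follows essentially the same contradiction/blow-up argument as the paper, which simply remarks that the proof of \cref{lemma:excessOnePhaseThreeAnnuliLemma} carries over once the normalization $w_n'$ is taken with respect to the $L^2$ norm rather than the Dirichlet energy. Your write-up just spells out the details the paper leaves implicit; the only minor imprecision is that the Robin-type term in $\mathcal F$ carries an extra factor of $r_n$ under the rescaling and hence disappears in the limit, so the limiting problem is the plain constant-coefficient thin obstacle (or interior elliptic) problem, but this is irrelevant for the unique continuation step.
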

\begin{proof}
    The proof is analogous to that of \cref{lemma:excessOnePhaseThreeAnnuliLemma}. The only difference is that in this case the functions $w'_n$ are defined as
    \[
    w'_n \coloneqq \frac{\widetilde{w}_n}{\int_{B_3} \widetilde{w}_n^2},
    \]
    where $\widetilde{w}_n$ are as in the proof of \cref{lemma:excessOnePhaseThreeAnnuliLemma}.
\end{proof}

\section{Outer and inner variations}\label{section:OuterAndInnerVariations}

In this section we study outer and inner variations for minimizers as in \cref{thm:thinobs}. We will use them later to prove monotonicity of a suitable frequency function.

We proceed similarly as in \cite{DeLellisSpadaro2016:AreaMinimizingCurrents3BlowUp, FocardiGelliSpadaro2015:UniqueContinuationLipschitz}. Let $M(x)$ be a uniformly elliptic and symmetric matrix field with Lipschitz continuous coefficients, and $\varphi:\R^+ \to \R^+$ be the Lipschitz cutoff function 
\begin{equation}\label{eqn:onePhaseCutoffDef}
\varphi(x) \coloneqq
\begin{cases}
    1 & \text{if } x \in (0, 1-\upsilon], \\
    \frac{1-x}{1-\upsilon}  & \text{if } x \in (1-\upsilon, 1], \\
    0 & \text{if } x \in (1, +\infty),
\end{cases}
\end{equation}
for some $\upsilon \in (\sfrac12, 1)$ and we set
\begin{equation}\label{eqn:psiCutoffBr}
\psi_r:\R^d\to\R^+\ ,\qquad \psi_r(x) \coloneqq \varphi\left( \frac{\vert x \vert}{r} \right).
\end{equation}
We define the height function
\begin{equation}\label{eqn:onePhaseFrequencyHeightDef}
H(r) \coloneqq - \int_{\R^d_+} \varphi'\left( \frac{\vert x \vert }{r} \right) \mu(x) \frac{w(x)^2}{\vert x \vert}\,,
\end{equation}
where the function $\mu(x)$ is defined as:
\begin{equation}\label{e:definition-of-mu}
\mu(x) \coloneqq \frac{x}{\vert x \vert} \cdot M(x) \frac{x}{\vert x \vert}\,.
\end{equation}
We define the energy function $D(r)$ as 
\begin{equation}\label{eqn:onePhaseEnergyDef}
D(r):=D_i(r)+D_b(r),   
\end{equation}
where 
\begin{align}
D_i(r) &\coloneqq \int_{\R^d_+} \varphi\left( \frac{\vert x \vert }{r} \right) M(x) \nabla w \cdot \nabla w, \label{e:definition-of-D-interior}\\
D_b(r) &\coloneqq -\frac12 \int_{\R^{d-1}\times\{0\}}\varphi\left(\frac{\vert x \vert}{r} \right)\,\partial_d Q(x)\,w^2,\label{e:definition-of-D-boundary}
\end{align}
and combining them we can define the frequency function
\begin{equation}\label{eqn:onePhaseFrequencyN(r)Def}
    N(r) \coloneqq \frac{r D(r)}{H(r)} .
\end{equation}
For later use, let us also define the quantity
\begin{equation}\label{eqn:onePhaseFrequencyL2Def}
G(r) \coloneqq \int_{\R^d_+} \varphi\left( \frac{\vert x \vert }{r} \right) w^2\,.
\end{equation}
Moreover, we introduce the following notation:
\begin{align}
& A(r) \coloneqq - \int_{\R^d_+} \varphi'\left( \frac{\vert x \vert }{r} \right) \frac{\vert x \vert}{\mu(x)} \left( M(x) \nabla w \cdot \frac{x}{\vert x \vert}\right)^2, \label{e:definition-of-A-levico}\\
& B(r) \coloneqq - \int_{\R^d_+} \varphi'\left( \frac{\vert x \vert }{r} \right) w \, \nabla w \cdot M(x) \frac{x}{\vert x \vert},\label{e:definition-of-B-levico}
\end{align}
In the next three subsections we will prove the following lemma.
\begin{lemma}[Frequency identities]\label{lem:freqidentities} Let $H$ be the height function from \eqref{eqn:onePhaseFrequencyHeightDef}. For all $r \in (0, 1)$ either $H(r) > 0$ or $H(r) \equiv 0$. Moreover, if the first case occurs, we have
\begin{equation}\label{eqn:onePhaseHDerivative}
H'(r) - \frac{d-1}{r} H(r) - \frac{2}{r} B(r) - e_{H}(r) = 0.
\end{equation}
\begin{equation}\label{eqn:onePhaseOuterVariationD}
D(r) - \frac{1}{r} B(r)= e_O(r),
\end{equation}
\begin{equation}\label{eqn:onePhaseDDerivative}
(d-2) D(r) - r D'(r) + \frac{2}{r} A(r)
+ e_{I}(r)  = 0.
\end{equation}
where the errors $e_H, e_O, e_I$ have the following estimates
\begin{align}
    |e_{H}(r)| &\le C\,H(r)\label{eqn:onePhaseHeightDerivativeErrorEst}\\
|e_O(r)|&\le C \, G(r)+ \sum_{k=1}^2 E^{o,k}(r),\label{e:error-innervariation-levico}\\
|e_I(r)|&\le C\, \int_{B_r'} \varphi\left(\frac{|x|}r\right)\,w^2
            +\sum_{k=1}^3 E^{i, k}(r).\label{eq:innervar-error-levico}
\end{align}
for some constant $C$ depending only on the $C^3$ norm of $m$, and moreover
\begin{align}   
& |E^{o, 1}(r)|\leq C  \int_{B_r^+} \varphi\left( \frac{\vert x \vert}{r} \right) \left(|w|\, \vert \nabla w \vert^2 + \vert \nabla w \vert^3\right),\label{eq:bulkouter1} \\
& |E^{o, 2}(r)| \leq \frac{C}{r} \int_{ B_r^+ } \left|\varphi'\right|\left( \frac{\vert x \vert}{r} \right)\,\left(|w|^2\,|\nabla w|+|w|\,|\nabla w|^2 \right).\label{eq:bulkouter2}
\end{align}
\begin{align}
|E^{i, 1}(r)| &\le C r\, \int_{B_r^+} \varphi\left( \frac{\vert x \vert}{r} \right) \left(|\nabla w|^2+|w|\,|\nabla w|\right), \label{eq:onePhaseE{i,1}}\\ 
|E^{i, 2}(r)| &\le C \, \int_{B_r^+} \varphi\left( \frac{\vert x \vert}{r} \right) \left(|\nabla w|^3+|w|\,|\nabla w|^2+|w|^2\,|\nabla w|\right)\,, \label{eq:onePhaseE{i,2}}\\
|E^{i, 3}(r)| & \le  \frac{C}{r} \int_{B_r^+} |\varphi'|\left( \frac{\vert x \vert}{r} \right) \vert x \vert \left(|\nabla w|^3+|w|\,|\nabla w|^2+|w|^2\,|\nabla w|\right)\,. \label{eq:onePhaseE{i,3}} 
\end{align}
\end{lemma}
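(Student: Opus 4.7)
I would first verify the dichotomy for $H$: if $H(r_0)=0$ for some $r_0 \in (0,1)$, then since $-\varphi'$ is uniformly positive on the annulus $\{r_0(1-\upsilon)<|x|<r_0\}$ and $\mu$ is bounded below by ellipticity of $M$, one gets $w\equiv 0$ there; strong unique continuation for the Euler--Lagrange equation of $\mathcal F+\mathcal E$ then propagates this to all of $B_1^+$, giving $H\equiv 0$. For identity \eqref{eqn:onePhaseHDerivative}, I would differentiate $H(r)$ in $r$, producing an integral of $-\frac{|x|}{r^2}\varphi''(|x|/r)\,\mu\, w^2/|x|$, and reshape it by applying the divergence theorem on $B_r^+$ to the vector field $M(x)\,x\,w^2/|x|^2$; the algebraic identity $\mathrm{div}(M x/|x|^2) = (d-2)\mu/|x|^2 + O(1)$ together with $\nabla(w^2)=2w\nabla w$ delivers $\frac{d-1}{r}H$ and $\frac{2}{r}B$, while derivatives of $\mu$, $M$ and of the radial weight $|x|^{1-d}$ are absorbed into $e_H$ and bounded by $C H(r)$ via Cauchy--Schwarz using the $C^1$ bound on $\mu$ inherited from $m\in C^3$.

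\textbf{Outer variation.} For \eqref{eqn:onePhaseOuterVariationD}, I would plug in the admissible perturbation $w_\epsilon := w + \epsilon\psi_r w = (1+\epsilon\psi_r)\,w$; this preserves $w\ge 0$ on $B_1'$ and, for $|\epsilon|$ small, the Lipschitz constraint $\mathcal L_{\sfrac12}$, so $\frac{d}{d\epsilon}\big|_0 (\mathcal F+\mathcal E)(w_\epsilon)=0$ from both sides. The quadratic part of $\mathcal F$ yields $2 D_i - \frac{2}{r}B$ after expanding $\nabla(\psi_r w)$. The bulk piece $\int \partial_d Q\,w\partial_d w$ is handled by integrating by parts in $x_d$, which converts its contribution on $B_r'$ (outward normal $-e_d$) into $2 D_b$, leaving a bulk remainder $-\int \psi_r\partial_{dd}Q\, w^2$ bounded by $C\,G(r)$; the two product-rule contributions $\partial_d\psi_r\cdot\partial_d Q\cdot w^2$ coming from the chain rule and the integration by parts cancel each other. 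Differentiating the analytic nonlinearities $g_k w^{3-k}P_k(\nabla w)$ produces integrals whose total degree in $(w,\nabla w)$ is at least $3$; collecting the terms carrying $\psi_r$ gives exactly $E^{o,1}$, and those carrying $\nabla\psi_r$ give $E^{o,2}$.

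\textbf{Inner variation.} For \eqref{eqn:onePhaseDDerivative} I would use the domain variation $w_\epsilon(x) := w(x + \epsilon\psi_r(x)\,x)$; the vector field $X(x):=\psi_r(x)\,x$ is tangent to $B_1'$ since $X_d=\psi_r x_d$ vanishes there, so the sign constraint on $B_1'$ and the Lipschitz condition are preserved. The classical first-variation formula for $\mathcal F$ produces $\int(\mathrm{div}\,X)\,M\nabla w\cdot\nabla w - 2\int M\nabla w\cdot(DX)^T\nabla w + \int(X\cdot\nabla M)\nabla w\cdot\nabla w$, together with the analogous derivatives of $D_b$. Substituting $\mathrm{div}\,X = d\psi_r + \frac{|x|}{r}\varphi'$ and $DX = \psi_r I + x\otimes\nabla\psi_r$, the $\psi_r$-pieces reconstruct $(d-2)D(r) + \frac{2}{r}A(r)$, while the $\frac{|x|}{r}\varphi'$-pieces reconstruct $-rD'(r)$ once one recognizes that $D'(r)$ itself is an annular $\varphi'$-integral. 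Coefficient-derivative contributions from $\nabla M$, $\nabla Q$ give the $E^{i,1}$ monomials (note the extra factor $|x|\le r$ from $X$), while the inner variation of $\mathcal E$ splits into a $\psi_r$-supported part ($E^{i,2}$) and a $\varphi'$-supported part ($E^{i,3}$).

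\textbf{Error classification and expected obstacle.} The quantitative bounds on $e_H$, $e_O$, $e_I$ and on each $E^{o,k}$, $E^{i,k}$ will then follow by inspection, using: (i) the $C^3$ regularity of $m$ (hence of $M$, $Q$, $\det D\Phi$) to bound all coefficient terms $M$, $\nabla M$, $\partial_d Q$, $\partial_{dd} Q$ uniformly; (ii) analyticity of the $g_k$ together with \cref{assumptions:wC1aAprioriEst} to bound them in $L^\infty$; (iii) homogeneity of $w^{3-k}P_k(\nabla w)$ and its derivatives to read off the mixed monomials in $|w|$ and $|\nabla w|$ prescribed by each $E^{o,k}$ and $E^{i,k}$. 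The main expected obstacle is essentially bookkeeping: one must carefully sort every contribution into bulk (weighted by $\psi_r$) versus annular (weighted by $\nabla\psi_r$ or $\varphi'$), and identify its precise combined degree in $(w,\nabla w)$, so that the subsequent almost-monotonicity argument can absorb each error at the right dyadic scale using the three-annuli lemmas of \cref{section:L2LinftyEstimates} together with the forthcoming Calder\'on-Zygmund decomposition.
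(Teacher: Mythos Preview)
Your outline for the outer variation and for the error bookkeeping is essentially what the paper does. There are, however, two genuine gaps.

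\textbf{Inner variation vector field.} You take $X=\psi_r\,x$; the paper takes $X=\psi_r\,F$ with $F(x)=M(x)x/\mu(x)$ (the Garofalo--Lin / Focardi--Gelli--Spadaro field). This is not cosmetic. With the naive radial field, the $\varphi'$-piece coming from $DX$ is
\[
-\tfrac{2}{r}\int \varphi'\!\left(\tfrac{|x|}{r}\right)\,\tfrac{1}{|x|}\,(M\nabla w\cdot x)\,(\nabla w\cdot x),
\]
not $\tfrac{2}{r}A(r)=-\tfrac{2}{r}\int \varphi'\,\tfrac{|x|}{\mu}\,(M\nabla w\cdot \hat x)^2$. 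The specific form of $A$ is what makes the Cauchy--Schwarz identity $AH\ge B^2$ exact in the monotonicity proof; your variant does not pair with $H$ and $B$. One can in principle absorb the discrepancy into $E^{i,1}$ using $M=I+O(|x|)$ and $\mu=1+O(|x|)$, but you do not do this, and in fact you mis-sort the pieces: $A(r)$ is a $\varphi'$-term (from $x\otimes\nabla\psi_r$ in $DX$), not a $\psi_r$-term as you claim. The paper also uses $F\cdot e_d=0$ on $B_1'$ (since $e_d$ is an eigenvector of $M$) to handle the boundary contribution of $D_b$ cleanly via an integration by parts on $B_r'$, which your sketch does not address.

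\textbf{Dichotomy.} You invoke ``strong unique continuation for the Euler--Lagrange equation of $\mathcal F+\mathcal E$'' to propagate $w\equiv 0$ from an annulus to all of $B_1^+$. That is circular: this is a quasilinear thin-obstacle problem with only $C^{0,1/2}$ coefficients in divergence form, for which no off-the-shelf unique continuation is available---indeed, establishing it is the point of the paper. The paper instead pulls back to the original coordinates, where $u$ and $m$ are both harmonic on $\Omega_u$, and uses standard harmonic unique continuation.

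A minor technical point on the height derivative: differentiating $H$ directly in $r$ produces $\varphi''$, which is a sum of Dirac masses since $\varphi$ is piecewise linear. The paper avoids this by first rewriting $H(r)=r\int \varphi(|x|/r)\,\mathrm{div}\big(M(x)\tfrac{x}{|x|^2}\,w^2\big)$ via the divergence theorem, and only then differentiating in $r$; your identity $\mathrm{div}(Mx/|x|^2)=(d-2)\mu/|x|^2+O(1/|x|)$ is exactly what the paper uses at that stage.
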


\subsection{Height derivative: proof of  \eqref{eqn:onePhaseHDerivative}}
 If $H(r) = 0$ for some $r \in (0, 1)$, then going back to the original coordinates we can find a (small) ball $B_{\varepsilon}(y_0)$ such that
    \[
    B_{\varepsilon}(y_0) \subset \{ \tilde u > 0\} \cap \{ m>0 \} \quad \text{and} \quad \tilde u = m \text{ on } B_{\varepsilon}(y_0),
    \]
    which implies that $\tilde u \equiv m$ in $B_1$ since they are harmonic on their positivity set. Hence $w \equiv 0$.
    
    Now let us turn to the height derivative. Similarly to \cite[Proposition 3.6]{FocardiGelliSpadaro2015:UniqueContinuationLipschitz}), we write $H$ as follows:
    \begin{align*}
        H(r)&=- \int_{\R^d_+} \varphi'\left( \frac{\vert x \vert }{r} \right) \mu(x) \frac{w^2(x)}{\vert x \vert}\,dx\\
        &=-\int_{\R^d_+} \varphi'\left( \frac{\vert x \vert }{r} \right) \left(\frac{x}{|x|}\cdot M(x)\frac{x}{|x|} \right)\frac{w^2(x)}{\vert x \vert}\,dx\\
        &=-\int_{\R^d_+} r\nabla\left[\varphi\left( \frac{\vert x \vert }{r} \right)\right] \cdot M(x)\frac{x}{|x|} \frac{w^2(x)}{\vert x \vert}\,dx\\
         &=\int_{\R^d_+} r\varphi\left( \frac{\vert x \vert }{r} \right) {\rm div}\left( M(x)\frac{x}{|x|} \frac{w^2(x)}{\vert x \vert}\right)\,dx.
    \end{align*} 
    Therefore differentiating in $r$, we get
    \begin{align*}
    H'(r)
    &=\frac{H(r)}{r}-\frac1r \int_{\R^{d}_+}|x|\varphi'\left( \frac{\vert x \vert }{r} \right) {\rm div}\left( M(x)\frac{x}{|x|} \frac{w^2(x)}{\vert x \vert}\right)\\
     &=\frac{H(r)}{r}-\frac2r \int_{\R^{d}_+}\varphi'\left( \frac{\vert x \vert }{r} \right) w\nabla w\cdot M(x)\frac{x}{|x|}-\frac1r \int_{\R^{d}_+}|x|\varphi'\left( \frac{\vert x \vert }{r} \right) w^2(x){\rm div}\left( M(x)\frac{x}{|x|^2}\right)
    \end{align*}
    The conclusion follows from
     \begin{align*}
    {\rm div}\left( M(x)\frac{x}{|x|^2}\right)&=\partial_i\left( M_{ij}(x)\frac{x_j}{|x|^2}\right)\\
    &=\partial_i M_{ij}(x)\frac{x_j}{|x|^2}+M_{jj}(x)\frac{1}{|x|^2}-2M_{ij}(x)\frac{x_jx_i}{|x|^4}\\
    &=M_{ij,i}(x)\frac{x_j}{|x|^2}+\frac{{\rm tr} (M(x))}{|x|^2}-2\frac{\mu(x)}{|x|^2}\\
    &=M_{ij,i}(x)\frac{x_j}{|x|^2}+\frac{d}{|x|^2}+\frac{{\rm tr} (M(x)-Id)}{|x|^2}-2\frac{\mu(x)}{|x|^2}\\
        &=M_{ij,i}(x)\frac{x_j}{|x|^2}+(d-2)\frac{\mu(x)}{|x|^2}+\frac{{\rm tr} (M(x)-Id)}{|x|^2}-d\frac{\mu(x)-1}{|x|^2}.
    \end{align*}
Setting 
$$e_H:=-\frac1r \int_{\R^{d}_+}|x|\varphi'\left( \frac{\vert x \vert }{r} \right) w^2(x)\left(M_{ij,i}(x)\frac{x_j}{|x|^2}+\frac{{\rm tr} (M(x)-Id)}{|x|^2}-d\frac{\mu(x)-1}{|x|^2}\right),$$
and recalling that $M$ is Lipschitz, we get \eqref{eqn:onePhaseHeightDerivativeErrorEst}.

\subsection{Outer variation: proof of \eqref{eqn:onePhaseOuterVariationD}}

A direct computation yields
\begin{align*}
    0&=\frac{d}{dt}\Big|_{t=0}\Big(\mathcal F(w+t\,\psi_r\,w;r)+\mathcal E(w+t\,\psi_r\,w;r)\Big)\\
    &\ =2 \int_{ B_r^+ } \varphi\left( \frac{\vert x \vert}{r} \right) M(x) \nabla w \cdot \nabla w + \frac{2}{r} \int_{ B_r^+ } \varphi'\left( \frac{\vert x \vert}{r} \right) w M(x) \nabla w \cdot \frac{x}{\vert x \vert} \\
    &\qquad +\int_{ B_r^+ }  \partial_dQ(x)\,\partial_dw \,\varphi\left( \frac{\vert x \vert}{r} \right)w + \partial_dQ \,w\,\partial_d\left(\varphi\left( \frac{\vert x \vert}{r} \right)w \right) +\int \sum_{k=1}^2E^{o, k}(r) \,.
\end{align*}
Integrating by parts and using the identity
\[
\partial_d\left(\partial_d Q\,\varphi\left(\frac{\vert x \vert}{r} \right)\,w^2\right)=\partial_dQ(x)\,\partial_dw \,\varphi\left( \frac{\vert x \vert}{r} \right)w + \partial_dQ \,w\,\partial_d\left(\varphi\left( \frac{\vert x \vert}{r} \right)w \right)+\partial_d^2Q\,\varphi\left( \frac{\vert x \vert}{r} \right)w^2,
\]
we get
\begin{align*}
   0&=2 \int_{ B_r^+ } \varphi\left( \frac{\vert x \vert}{r} \right) M(x) \nabla w \cdot \nabla w + \frac{2}{r} \int_{ B_r^+ } \varphi'\left( \frac{\vert x \vert}{r} \right) w M(x) \nabla w \cdot \frac{x}{\vert x \vert} \\
    &\qquad -\int_{B'_r}\varphi\left(\frac{\vert x \vert}{r} \right)\,\partial_d Q(x)\,w^2-\int_{B_r^+}\varphi\left(\frac{\vert x \vert}{r} \right)\,\partial_d^2 Q(x)\,w^2+\sum_{k=1}^2E^{o, k}(r) \,.
\end{align*}
The above identity reads as:
\begin{align}
D(r)
&=\frac1rB(r)+\frac12\int_{B_r^+}\varphi\left(\frac{\vert x \vert}{r} \right)\,\partial_d^2 Q(x)\,w^2-\frac12\sum_{k=1}^2E^{o, k}(r),\label{e:innervariation-in-DAB-levico-0}
\end{align}
which can be written in the form 
\begin{align}
D(r)
&=\frac1rB(r)+e_O(r),\label{e:innervariation-in-DAB-levico}
\end{align}
by setting
\begin{align*}
e_O(r):=\frac12\int_{B_r^+}\varphi\left(\frac{\vert x \vert}{r} \right)\,\partial_d^2 Q(x)\,w^2-\frac12\sum_{k=1}^2E^{o, k}(r).
\end{align*}

\subsection{Inner variation: proof of \eqref{eqn:onePhaseDDerivative}}
Consider a family of diffeomorphisms of $B_r$ onto itself of the form
\[
T_{\varepsilon}(x) \coloneqq x + \varepsilon \psi_r(x) F(x),
\]
with $\psi_r$ as in \eqref{eqn:psiCutoffBr} and, as in \cite{FocardiGelliSpadaro2015:UniqueContinuationLipschitz}, we choose the vector field $F$ to be
\begin{equation}\label{eqn:InnerVariationFieldFDef}
F(x) \coloneqq \frac{M(x) x}{\mu(x)} \quad \text{with} \quad \mu(x) \coloneqq M(x) \frac{x}{\vert x \vert} \cdot \frac{x}{\vert x \vert},
\end{equation}
where the matrix field $M(x)$ is given by \eqref{eq:elliptic_matrix}, so that
\begin{equation}\label{eqn:InnerVariationFieldFProperties}
F(x) \cdot \frac{x}{\vert x \vert} = \vert x \vert \ \text{ in }\ B_1 \qquad \text{and} \qquad F(x) \cdot e_d = 0\ \text{ on }\ B_1',
\end{equation}
where the last property follows from the fact that $e_d$ is an eigenvector of $M$. Let $\mathcal F$ and $w$ be as in \cref{thm:thinobs}, and $w_{\varepsilon}$ be defined as
\[
w_{\varepsilon} \coloneqq w \circ T_{\varepsilon}^{-1}.
\]

Given any function $L=L(x,y,p)$, with variables $x\in\R^d$, $y\in\R$, $p\in\R^d$, a standard change of coordinates and direct differentiation yield
\begin{align*}
        \begin{split}
    \int_{B_r^+} L\left( x, w_{\varepsilon}, \nabla w_{\varepsilon} \right)
        &= \int_{B_r^+} L\left( x, w \circ T_{\varepsilon}^{-1}, D(T_{\varepsilon}^{-1}) (\nabla w \circ T_{\varepsilon}^{-1}) \right)\\
        &= \int_{B_r^+} L\left( T_{\varepsilon}, w, (DT_{\varepsilon})^{-1} \nabla w \right) |\det(DT_\varepsilon)| \\
            & = \int_{B_r^+} \Big[ L\left( x, w, \nabla w \right) + \varepsilon \nabla_x L\left( x, w, \nabla w \right) \cdot \psi_r F \\
            &\qquad\qquad - \varepsilon \nabla_p L\left( x, w, \nabla w \right) \cdot D(\psi_r F) \nabla w \Big] \left[ 1 + \varepsilon \diverg\left( \psi_r F \right) \right] + o(\varepsilon) \\
            & = \int_{B_r^+} L\left( x, w, \nabla w \right) + \varepsilon \int_{B_r^+} \Big[ \nabla_x L\left( x, w, \nabla w \right) \cdot \psi_r F + L\left( x, w, \nabla w \right) \diverg\left( \psi_r F \right) \\
            &\quad\qquad\qquad\qquad\qquad- \nabla_p L\left( x, w, \nabla w \right) \cdot D(\psi_r F) \nabla w \Big] + o(\varepsilon).
        \end{split}
    \end{align*}
Applying this formula to $L(x,y,p)=p\cdot M(x)\,p +\partial_dQ(x)\,y\,p_d$  gives    
\begin{align*}
\frac{d}{d\varepsilon}\Big|_{\varepsilon=0}\mathcal F(w_{\eps};r)
   &= (d-2) \int_{B_r^+} \varphi\left( \frac{\vert x \vert}{r} \right) M(x) \nabla w \cdot \nabla w + \frac{1}{r} \int_{B_r^+} \varphi'\left( \frac{\vert x \vert}{r} \right) \vert x \vert M(x) \nabla w \cdot \nabla w   \\
    &\qquad  - \frac{2}{r} \int_{B_r^+} \varphi'\left( \frac{\vert x \vert}{r} \right) \frac{\vert x \vert}{\mu(x)} \left(M(x) \nabla w \cdot \frac{x}{\vert x \vert} \right)^2+E_M+E_F\\
            &\qquad  +\int  \nabla \partial_d Q(x)\cdot (\psi_rF)\, w\,\partial_dw+\partial_d Q(x)\, w\, \partial_d w\, {\rm div}(\psi_r F)-w\,\partial_dQ\partial_d(\psi_r F)\cdot\nabla w
\end{align*}
where $E_M$ and $E_F$ are the following error terms with estimates:
    \begin{align}\label{eqn:onePhaseInnerVariationErrE_h^{i,1}Def}
        \begin{split}
            E_M \coloneqq \int_{B_r^+}  \nabla_x M(x)[\nabla w,\nabla w, \psi_r F], 
            \end{split}
    \end{align}
    which, since $|F(x)|\leq |x|$, can be estimated by
    \[
    |E_M|\leq C r\, \int_{B_r^+} \varphi\left( \frac{\vert x \vert}{r} \right) |\nabla w|^2\,;
    \]
for $E_F$, on the other hand, we have:    
\begin{align}\label{eqn:onePhaseInnerVariationErrE_h^{i,2}Def}
        \begin{split}
            E_F \coloneqq \int_{B_r^+} \varphi\left( \frac{\vert x \vert}{r} \right) &  \nabla w\cdot M(x)\nabla w  \diverg\left( F(x) - x \right) \\
            & - 2\int_{B_r^+} \varphi\left( \frac{\vert x \vert}{r} \right) (M(x)\nabla w)  \cdot D(F(x) - x) \nabla w,
            \end{split}
    \end{align}
which, since $\frac{M(x)}{\mu(x)}$ is smooth and $\frac{M(x)}{\mu(x)}=I+O(x)$ by \eqref{e:PhiProperties-Phi0} and since $|\nabla m|=1$ and $\partial_i \phi(0)=0$ for every $i=1,\dots, d-1$, gives 
\[
\partial_i\left(\frac{M(x)x}{\mu(x)}\right)-e_i=\partial_i\left(\frac{M(x)}{\mu(x)}\right)\,x+\left(\frac{M(x)}{\mu(x)}e_i-e_i\right)=\partial_i(M(x)/\mu(x))\,x+O(|x|)=O(|x|) 
\]
 so that
 \[
 |E_F|\leq C\,r\, \int_{B_r^+} \varphi\left( \frac{\vert x \vert}{r} \right) |\nabla w|^2\,. 
 \]
Now, an integration by parts, and the fact that $F\cdot e_d=0$ on $B_1'$ yield
\begin{align*}
    &\int_{B_r^+}  \nabla \partial_d Q(x)\cdot (\psi_rF)\, w\,\partial_dw+\partial_d Q(x)\, w\, \partial_d w\,{\rm div}(\psi_r F)-w\,\partial_dQ\,\partial_d(\psi_r F)\cdot\nabla w\\
    &\quad=\underbrace{\int_{B_r^+} {\rm div}\left(  \partial_d Q(x)\,\psi_rF\, w\,\partial_dw\right)}_{=0}\\
    &\qquad\qquad-\int_{B_r^+} \left(\partial_dQ\,(\psi_r F)\cdot\nabla w\,\partial_dw+\partial_dQ\,w\,\psi_r F\cdot\partial_d\nabla w+\partial_dQ\,\partial_d(\psi_r F)\cdot\,\nabla w\,w\right)\\
    &\quad =-\int_{\pi_0}\varphi\left(\frac{|x|}r\right)\,\partial_dQ\,w\,F\cdot \nabla w+\underbrace{\int_{B_r^+}\varphi\left(\frac{|x|}r\right) \,\partial_d^2Q(x)w\,F\cdot \nabla w}_{E_Q(r)}\,.
\end{align*}
Integrating by parts on $\pi_0$ and using that $F\cdot \nabla w=F\cdot \nabla_{x'} w$, we get
\begin{align*}
\int_{\pi_0}&\varphi\left(\frac{|x|}r\right)\,\partial_dQ\,w\,F\cdot \nabla w= \frac12\int_{\pi_0}\varphi\left(\frac{|x|}r\right)\,\partial_dQ\,F\cdot \nabla_{x'} (w^2)\\
&= -\frac12\int_{\pi_0}\frac1r\varphi'\left(\frac{|x|}r\right)\,\partial_dQ\,F\cdot \frac{x}{|x|}\, w^2-\frac12\int_{\pi_0}\varphi\left(\frac{|x|}r\right)\,{\rm div}_{x'}(\partial_dQ\,F)\, w^2\\
&= -\frac12\int_{\pi_0}\frac1r\varphi'\left(\frac{|x|}r\right)\,\partial_dQ\,|x|\, w^2-\frac12\int_{\pi_0}\varphi\left(\frac{|x|}r\right)\,{\rm div}_{x'}(\partial_dQ\,F)\, w^2.
\end{align*}
Setting $E^{i,1}=E_M+E_F+E_Q$ the estimate on $E^{i,1}$ follows.
Finally, by applying the above procedure to 
$L(x,y,p)=g_k(x,y,p)P_k(p)y^{3-k},$ we get the remaining desired error estimates.

Putting everything together in terms of $D,A,B$ we get: 
\begin{align}
 0&= (d-2) D_i(r) + \frac{1}{r} \int_{B_r^+} \varphi'\left( \frac{\vert x \vert}{r} \right) \vert x \vert M(x) \nabla w \cdot \nabla w -\frac1r \int_{\pi_0} |x|\,\varphi'\left(\frac{|x|}{r}\right) \,\partial_dQ\,w^2 \notag\\
    &\qquad\qquad  +\frac{2}{r} A(r) -\frac12\int_{\pi_0}\varphi\left(\frac{|x|}r\right)\,{\rm div}(F\partial_dQ)\,w^2
            +\sum_{k=1}^3 E^{i, k}(r).\label{eq:innervar-levico}
\end{align}
On the other hand, computing the derivative of $D(r)$ with respect to $r$, gives: 
  \begin{equation}\label{e:derivative-of-D-raw}
    D'(r) = -\frac{1}{r^2} \int_{\R^d_+} \varphi'\left( \frac{\vert x \vert }{r} \right) \vert x \vert M \nabla w \cdot \nabla w + \frac{1}{2 r^2} \int_{\pi_0} \varphi'\left( \frac{\vert x \vert }{r} \right) \vert x \vert \partial_dQ(x) w^2.
    \end{equation}
    Combining \eqref{e:derivative-of-D-raw} with \eqref{eq:innervar-levico} gives:
    \begin{align*}
 rD'(r)&= (d-2) D_i(r) +\frac{2}{r} A(r) -\frac12\int_{\pi_0}\varphi\left(\frac{|x|}r\right)\,{\rm div}(F\partial_dQ)\,w^2
            +\sum_{k=1}^3 E^{i, k}(r).
\end{align*}
Finally, using the definition \eqref{e:definition-of-D-boundary} of $D_b(r)$ and the fact that 
$$D_i(r)=D(r)-D_b(r)=D(r)+\frac12 \int_{\R^{d-1}\times\{0\}}\varphi\left(\frac{\vert x \vert}{r} \right)\,\partial_d Q(x)\,w^2,$$
we get 
\begin{align*}
 rD'(r)&= (d-2) D(r) +\frac{2}{r} A(r)+e_I(r),
 \end{align*}
where 
\begin{align*}
e_I(r):=\frac12\int_{\pi_0} \varphi\left(\frac{|x|}r\right) ((d-2)\partial_dQ-{\rm div}(\partial_dQ\,F))\,w^2
            +\sum_{k=1}^3 E^{i, k}(r).
\end{align*}

\section{Error estimates}\label{section:ErrorEstimates}

In this section we will provide estimates for the errors in \cref{lem:freqidentities}. In particular we will prove the following
\begin{proposition}[Error estimates]\label{prop:errors}
    There exist constants $R=R(d, \delta_0)>0$, $\kappa=\kappa(d, \delta_0)>0$ and $C = C(d, \delta_0)>0$ with the following property. Suppose that $w$ is a solution of \eqref{eqn:thinObstacleRobinRegularity} under \cref{assumptions:wC1aAprioriEst} and \cref{assumptions:startingAssumptions}. Then, for all $r \in (0, R)$ the following estimates hold:
    \begin{align}
    & \left\vert E^{o, 1}(r) \right\vert \le C D_i(r)^{1+\kappa}, \label{eqn:frequencyErrorsE^{o,1}Est}\\
    & \left\vert E^{o, 2}(r) \right\vert \le C \left[H(r) D_i(r)^{2 \kappa} D_i'(r)\right]^{\sfrac12}, \label{eqn:frequencyErrorsE^{o,2}Est} \\
    & \left\vert E^{i, 1}(r) \right\vert \le C r D_i(r), \label{eqn:frequencyErrorsE^{i,1}Est} \\
    & \left\vert E^{i, 2}(r) \right\vert \le C D_i(r)^{1+\kappa}, \label{eqn:frequencyErrorsE^{i,2}Est} \\
    & \left\vert E^{i, 3}(r) \right\vert \le C \, r \left[ D_i(r)^{\kappa} D_i'(r) + D(r)^{1+\kappa} \right]. \label{eqn:frequencyErrorsE^{i,3}Est}
    \end{align}
\end{proposition}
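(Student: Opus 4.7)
The plan is to interpret each error term as a cubic or higher-order refinement of the quadratic main term $D_i(r)$, and to extract from the cubic integrals an $L^\infty$ factor of $w$ or $\nabla w$ which can be bounded by a small power of $D_i(r)$. The key analytical ingredient is \cref{prop:nonlinearThinObstacleC1aRegularity}, which converts pointwise bounds into $L^2$ averages, combined with the three-annuli-type \cref{lemma:excessOnePhaseThreeAnnuliLemma} and \cref{lemma:heightOnePhaseThreeAnnuliLemma} and a Calder\'on-Zygmund-type dyadic decomposition in the spirit of Almgren and De Lellis-Spadaro. The $\kappa$-gain in the final exponents arises from the stopping-time threshold of the decomposition.

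For the bulk cubic errors $E^{o,1}$ and $E^{i,2}$, I would factor out the smallest $L^\infty$ norm and retain $\int\varphi |\nabla w|^2 \le D_i(r)$. Under a local doubling hypothesis, \cref{prop:nonlinearThinObstacleC1aRegularity} applied on balls of radius $\sim r$ gives $\|\nabla w\|_{L^\infty(B_{r/2})} \le C r^{-d/2-1}\|w\|_{L^2(B_r)}$, and combined with the Poincar\'e-type bound $\|w\|_{L^2(B_r)}^2 \le C r^2 D_i(r)$ this produces the factor $D_i(r)^\kappa$ for some $\kappa = \kappa(d,\alpha) > 0$. The linear bound for $E^{i,1}$ is then immediate: the $|\nabla w|^2$-term is literally $rD_i(r)$, and the mixed $|w||\nabla w|$-term is handled by Cauchy-Schwarz together with the Poincar\'e bound just mentioned. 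For the annular errors $E^{o,2}$ and $E^{i,3}$, which are localized in $\{|x|\sim r\}$ by the factor $|\varphi'|$, I would apply Cauchy-Schwarz to pair a $w^2$-integral (contributing $\sim rH(r)$ in view of \eqref{eqn:onePhaseFrequencyHeightDef}) with a $|\nabla w|^2$-integral (contributing $\sim rD_i'(r)$ via \eqref{e:derivative-of-D-raw}); the remaining $L^\infty$ factors then provide the required $D_i^{2\kappa}$ or $D^{1+\kappa}$ gain.

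The main obstacle is the Calder\'on-Zygmund stopping-time construction, since the argument above requires doubling of $\|w\|_{L^2}$ at the scale where \cref{prop:nonlinearThinObstacleC1aRegularity} is invoked, and this need not hold at the single scale $r$. To fix this, I would perform a dyadic decomposition of $B_r^+$, stopping at any cube $Q$ of side $\ell$ for which a normalized ratio of the form $\ell^{-d-2}\int_{Q^*} w^2$ first exceeds a small threshold (with $Q^*$ a suitable enlargement of $Q$). On the stopped cubes, the three-annuli-type lemmas guarantee non-trivial doubling at the cube scale, so the $L^\infty$ bounds above apply locally; summing over the stopped cubes and absorbing the threshold-induced loss yields the global $D_i(r)^\kappa$ gain. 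The delicate balance is to tune the threshold so that the stopped family essentially covers $B_r^+$ with bounded overlap and so that the resulting gain matches the exponents in \eqref{eqn:frequencyErrorsE^{o,1}Est}--\eqref{eqn:frequencyErrorsE^{i,3}Est}; for the annular errors one must additionally verify that the localization to $\{|x|\sim r\}$ is compatible with the dyadic cubes so that $D_i'(r)$ genuinely appears on the right-hand side rather than some averaged surrogate.
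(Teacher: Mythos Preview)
Your overall approach matches the paper's: a Whitney-type decomposition combined with \cref{prop:nonlinearThinObstacleC1aRegularity} and the three-annuli lemmas to obtain a uniform cube-wise bound $\|w\|_{L^\infty(L)}+\|\nabla w\|_{L^\infty(L)}\le CD_i(r)^\kappa$, then summing over cubes and applying Cauchy--Schwarz (pairing $H(r)$ with $D_i'(r)$) for the annular terms. Two points, however, need correction.

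First, the scale-$r$ computation in your second paragraph does not produce $D_i(r)^\kappa$. The chain you wrote gives only $\|\nabla w\|_{L^\infty(B_{r/2})}\le Cr^{-d/2}D_i(r)^{1/2}$, and since there is no lower bound on $D_i(r)$ in terms of $r$ (indeed $w$ may vanish to arbitrarily high order at the origin), this cannot be converted into a positive power of $D_i(r)$. The Whitney decomposition is therefore the essential mechanism, not a refinement addressing a doubling technicality.

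Second, the stopping rule and the role of the three-annuli lemmas are inverted in your description. The paper stops on \emph{two} thresholds, one for excess ($\int_{B_L}|\nabla w|^2\ge C_0\ell(L)^{d+2\alpha}$) and one for height ($\int_{B_L}w^2\ge C_0\ell(L)^{d+2+2\alpha}$); the resulting comparison between a stopped cube $L$ and its father $H$ furnishes the doubling \emph{hypotheses} of \cref{lemma:excessOnePhaseThreeAnnuliLemma} and \cref{lemma:heightOnePhaseThreeAnnuliLemma}, and those lemmas are not used to justify \cref{prop:nonlinearThinObstacleC1aRegularity} (which needs no doubling). Their conclusion is instead that the energy on $L$ is comparable to that on any sub-ball of $L$; this lets one shift the energy away from $\partial B_r$ (where $\varphi$ vanishes) to a ball on which $\varphi\ge c\,\ell(L)$, yielding $\ell(L)^{d+2\alpha+1}\le CD_i(r)$. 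Combining this with $\|\nabla w\|_{L^\infty(L)}\le C\ell(L)^\alpha$---which follows from \cref{prop:nonlinearThinObstacleC1aRegularity} and the fact that the \emph{father} did not stop---gives the key pointwise estimate with $\kappa=\alpha/(d+2\alpha+1)$. On the unstopped set $\Gamma$ one has $w=\nabla w=0$, so those points contribute nothing to the error integrals.
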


\subsection{Weighted Poincar\'e and trace inequalities}
In this subsection we gather some inequalities that will be used several times in the proof of \cref{prop:errors}.

\subsubsection*{Height inequality}

We start by showing a bound for the weighted $L^2$ norm in terms of the height function.

\begin{lemma}[Height function integral inequality]\label{lemma:height-function-integral-inequality}
 There is a constant $C_\mu$, depending only on the function $\mu$ defined in \eqref{e:definition-of-mu}, such that for every function $w\in H^1(B_r^+)$ it holds
    \begin{gather}
       G(r)= \int_{\R^d_+} \varphi\left(\frac{|x|}{r}\right)\,w^2\le C_\mu\int_0^r\,H(\rho)\,d\rho,
    \end{gather}
    where $H$ is the height function from \eqref{eqn:onePhaseFrequencyHeightDef}.
\end{lemma}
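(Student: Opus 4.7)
The plan is to reduce the estimate to a clean pointwise inequality obtained after exchanging the order of integration. Substituting the definition \eqref{eqn:onePhaseFrequencyHeightDef} of $H$ into $\int_0^r H(\rho)\,d\rho$ and applying Fubini, I rewrite
\[
\int_0^r H(\rho)\,d\rho = \int_{\R^d_+}\mu(x)\,\frac{w^2(x)}{|x|}\,\Psi(x,r)\,dx, \qquad \Psi(x,r)\coloneqq -\int_0^r \varphi'\!\left(\frac{|x|}{\rho}\right)\,d\rho.
\]
Combined with the uniform ellipticity of $M$, which provides a constant $c_\mu>0$ with $\mu(x)\ge c_\mu$ on $\overline{B_1^+}$, the lemma will follow at once from the pointwise bound $\Psi(x,r)\ge |x|\,\varphi(|x|/r)$ for all $x\in\R^d_+$.

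To establish that pointwise bound I would perform the change of variable $t=|x|/\rho$ (so $d\rho=-|x|/t^2\,dt$) in the definition of $\Psi$, getting
\[
\Psi(x,r) = |x|\int_{|x|/r}^{+\infty}\frac{-\varphi'(t)}{t^2}\,dt.
\]
The key observation is that $\varphi$ is non-increasing with $\supp(\varphi')\subset(1-\upsilon,1)\subset(0,1]$, so $-\varphi'\ge 0$ on $(0,+\infty)$ and $1/t^2\ge 1$ on $\supp(\varphi')$. Dropping the factor $1/t^2$ and using $\varphi(+\infty)=0$ yields
\[
\Psi(x,r) \ge |x|\int_{|x|/r}^{+\infty}(-\varphi'(t))\,dt = |x|\,\varphi(|x|/r),
\]
which is exactly what is needed. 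Plugging this back in gives $\int_0^r H(\rho)\,d\rho \ge c_\mu\,G(r)$, and the claim follows with $C_\mu=1/c_\mu$.

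The argument is essentially a Fubini plus change-of-variable calculation, so no step should be a serious obstacle. The only place requiring a bit of care is the sign bookkeeping in the substitution $t=|x|/\rho$ and the verification that $\supp(\varphi')\subset(0,1]$, which is precisely what allows the denominator $t^2$ to be discarded in the key inequality; everything else is immediate from the ellipticity of $M$ and the fundamental theorem of calculus applied to $\varphi$.
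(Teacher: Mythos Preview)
Your proof is correct and follows essentially the same approach as the paper. The paper keeps the integration variable $\rho$ and uses directly that on the support of $\varphi'(|x|/\rho)$ one has $|x|\le\rho$, hence $\tfrac{1}{|x|}\ge\tfrac{|x|}{\rho^2}$, and then recognizes $-\tfrac{|x|}{\rho^2}\varphi'(|x|/\rho)=\partial_\rho[\varphi(|x|/\rho)]$; your substitution $t=|x|/\rho$ is simply an equivalent way to package the same inequality ($t\le 1$ on $\supp\varphi'$) before applying the fundamental theorem of calculus.
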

\begin{proof}
Since 
$$\frac{\partial}{\partial r}\left[\varphi\left(\frac{|x|}{r}\right)\right]=-\frac{|x|}{r^2}\varphi'\left(\frac{|x|}{r}\right),$$
it is immediate to check that 
\begin{align*}
\int_0^rH(s)\,ds&=- \int_0^r\int_{\R^d_+} \frac{1}{\vert x \vert}\varphi'\left( \frac{\vert x \vert }{s} \right) \mu(x) w^2(x)\,dx\,ds\\
&\ge - \int_0^r\int_{\R^d_+} \frac{|x|}{s^2}\varphi'\left( \frac{\vert x \vert }{s} \right) \mu(x) w^2(x)\,dx\,ds\\
&\ge \big(\min_{x\in B_r^+}\mu(x)\big)\int_{\R^d_+} \varphi\left( \frac{\vert x \vert }{r} \right) w^2(x)\,dx\,,
\end{align*}
which concludes the proof.
\end{proof}

\subsubsection*{Trace inequality}
\begin{lemma}[A weighted trace inequality]\label{lemma:weighted-trace-inequality}
	Let $r\ge 0$, and let $\psi_r:[0,+\infty)\to\R$ be a smooth, nonnegative and decreasing function such that 
	$$\psi_r\equiv 1\quad\text{on}\quad [0,\frac{r}{2}]\qquad\text{and}\qquad\psi_r\equiv 0\quad\text{on}\quad [r,+\infty).$$ 
 Then, for every function $w\in H^1(B_r^+)$, we have 
	$$\int_{B_r'}\psi_r(|x|)w^2(x',0)\,dx'\le C_d\left(\frac1r\int_{B_r^+}\psi_r(|x|)w^2(x)\,dx+r\int_{B_r^+}\psi_r(|x|)|\nabla w|^2(x)\,dx\,\right),$$
	where $C_d$ is a dimensional constant.
\end{lemma}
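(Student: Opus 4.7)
The plan is to prove the estimate via a rescaling and an integration-by-parts in the $x_d$-direction. First, by substituting $y = x/r$ I would reduce to the case $r = 1$: setting $\widetilde w(y) := w(ry)$, the inequality becomes
\[
\int_{B_1'}\psi_1(|y'|)\widetilde w^2(y',0)\,dy' \le C_d\left(\int_{B_1^+}\psi_1(|y|)\widetilde w^2\,dy + \int_{B_1^+}\psi_1(|y|)|\nabla \widetilde w|^2\,dy\right),
\]
and both integrals transform correctly under the scaling.

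Next, for each fixed $x' \in B_1'$, observe that the function $\phi_{x'}(x_d) := \psi_1(|(x',x_d)|)$ satisfies $\phi_{x'}(0) = \psi_1(|x'|)$ and has compact support in $[0, 1]$. By the fundamental theorem of calculus,
\[
\psi_1(|x'|)\widetilde w^2(x',0) = -\int_0^{\infty}\partial_{x_d}\bigl(\psi_1(|x|)\widetilde w^2(x)\bigr)\,dx_d = \int_0^{\infty}\left|\psi_1'(|x|)\right|\tfrac{x_d}{|x|}\widetilde w^2\,dx_d - 2\int_0^{\infty}\psi_1(|x|)\widetilde w\,\partial_d\widetilde w\,dx_d,
\]
where I used that $\psi_1'\le 0$ and $x_d \ge 0$ so the first term is nonnegative. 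On the product term I would apply Young's inequality $2|\widetilde w||\partial_d\widetilde w| \le \widetilde w^2 + |\partial_d\widetilde w|^2$. On the first term I would use the pointwise bound $|\psi_1'(|x|)| \le \|\psi_1'\|_\infty$ (supported in $\{\tfrac12 \le |x| \le 1\}$). Integrating over $x' \in B_1'$ and applying Fubini then yields
\[
\int_{B_1'}\psi_1(|x'|)\widetilde w^2(x',0)\,dx' \le C\int_{B_1^+}\chi_{\{1/2 \le |x| \le 1\}}\widetilde w^2\,dx + C\int_{B_1^+}\psi_1(|x|)\bigl(\widetilde w^2 + |\nabla\widetilde w|^2\bigr)\,dx,
\]
where $C$ depends on $\|\psi_1\|_{C^1}$ and dimension.

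The main subtlety in matching the statement as written is that the $|\psi_r'|$-term, arising from differentiating the weight, is supported in the annulus $\{r/2 \le |x| \le r\}$, where $\psi_r(|x|)$ itself may vanish; one cannot simply bound $|\psi_r'| \lesssim \psi_r/r$ for a general cutoff. I expect this to be the only real obstacle. The standard resolution is either to specialize to cutoffs of the form $\psi_r = \chi^2$ with $\chi$ smooth (so that $|\psi_r'|^2 \le 4\|\chi'\|_\infty^2\, \psi_r/r^2$, which allows one to absorb the first term into the second), or equivalently to observe that since $\psi_r \le 1$ the inequality with $\chi_{B_r}$ in place of $\psi_r$ on the right-hand side is also sufficient for the subsequent frequency estimates; this is the weighted trace inequality that will actually be invoked.
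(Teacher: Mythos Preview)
Your integration-by-parts in the vertical direction does not prove the lemma as stated, and you correctly identify why: the term $\int |\psi_1'(|x|)|\,\tfrac{x_d}{|x|}\,\widetilde w^2$ is supported on the annulus $\{r/2\le|x|\le r\}$, where $\psi_r$ may vanish, so it cannot be absorbed into $\int\psi_1 \widetilde w^2$. Your proposed fixes (take $\psi_r=\chi^2$, or replace $\psi_r$ by $\chi_{B_r}$ on the right-hand side) either change the hypotheses or prove a weaker inequality; in particular your constant depends on $\|\psi_r'\|_\infty$, whereas the lemma claims a purely dimensional $C_d$ for \emph{any} decreasing cutoff with the stated plateau-and-support structure.

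The paper's proof avoids differentiating $\psi_r$ altogether. Instead of integrating vertically, it integrates along the slanted segments
\[
t\mapsto\bigl(x'(1-t/r),\,t\bigr),\qquad t\in(0,r/8),
\]
which start at $(x',0)$ and move \emph{towards the origin} in the $x'$-variable as they go up. One writes
\[
w^2(x',0)\le \frac{8}{r}\int_0^{r/8}w^2\bigl(x'(1-t/r),t\bigr)\,dt+4\int_0^{r/8}|w|\,|\nabla w|\bigl(x'(1-t/r),t\bigr)\,dt,
\]
and the key geometric observation is the pointwise weight comparison
\[
\psi_r(|x'|)\le\psi_r\bigl(|(x'(1-t/r),t)|\bigr)\qquad\text{for all }(x',t)\in B_r'\times(0,r/8).
\]
This holds because either $|x'|^2\ge tr$, in which case a direct computation shows $|(x'(1-t/r),t)|\le|x'|$ and monotonicity of $\psi_r$ applies, or $|x'|^2\le tr$, in which case $|(x'(1-t/r),t)|\le\sqrt{2tr}\le r/2$ lands in the plateau where $\psi_r\equiv1$. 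Multiplying the displayed inequality by $\psi_r(|x'|)$, using this comparison to push the weight inside the $t$-integrals, and then changing variables (the Jacobian of $(x',t)\mapsto(x'(1-t/r),t)$ is bounded between $2^{-(d-1)}$ and $1$) gives exactly the stated estimate with a dimensional constant. The plateau $\psi_r\equiv1$ on $[0,r/2]$ is not decorative: it is what makes the second case of the weight comparison go through, and it is precisely the structural feature your vertical-integration approach cannot exploit.
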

\begin{proof}
For every $x'\in B_{r}'$ and $s\in(0, r)$ we have that the point $(x'(1-s/r),s)$ belongs to $B_r^+$. Indeed,
$$|x'|^2(1-s/r)^2+s^2\le r^2(1-s/r)^2+s^2=(r-s)^2+s^2\le r^2.$$
For every $s\in(0,r)$, we have the estimate
	$$v^2(x',0)-v^2(x'(1-s/r),s)\le 4\int_0^{s}|v|(x'(1-t/r),t)|\nabla v|(x'(1-t/r),t)\,dt,$$
which implies
	$$v^2(x',0)\le \frac{8}{r}\int_{0}^{r/8}v^2(x'(1-t/r),t)\,dt+4\int_0^{r/8}|v|(x'(1-t/r),t)|\nabla v|(x'(1-t/r),t)\,dt.$$
	Multiplying both sides by $\psi_r(|x'|)$ we obtain 
	\begin{align*}
		\psi_r(|x'|)v^2(x',0)&\le \frac{8}{r}\int_{0}^{r/8}\psi_r(|x'|)v^2(x'(1-t/r),t)\,dt\\
		&\qquad+4\int_0^{r/8}\psi_r(|x'|)|v|(x'(1-t/r),t)|\nabla v|(x'(1-t/r),t)\,dt.
	\end{align*}
	We next claim that 
	\begin{equation}\label{e:psi-bound-levico}
		\psi_r(|x'|)\le \psi_r(|(x'(1-t/r),t)|)\quad\text{for every}\quad (x',t)\in B_{r}'\times (0, r/8).
	\end{equation}
For $|x'|^2\ge tr$ we have
	\begin{align*}
		|x'|^2(1-t/r)^2+t^2
		&=|x'|^2+t^2\Big(1+\frac{|x'|^2}{r^2}-2\frac{|x'|^2}{rt}\Big)\\
		&\le|x'|^2+t^2\Big(1-\frac{|x'|^2}{rt}\Big)\le |x'|^2,
	\end{align*}
which (since $\psi_r$ is decreasing) implies that in this case 
	$$\psi_r(|x'|)\le \psi_r(|(x'(1-t/r),t)|).$$
	Conversely, when $|x'|^2\le tr$, we have that 
	\begin{align*}
		|(x'(1-t/r),t)|&\le \sqrt{|x'|^2+t^2}\le \sqrt{tr+t^2}\le \sqrt{2tr}\le \frac{r}{2}.
		\end{align*}
		 so in this case $\psi_r(|x'|)=\psi_r(|(x'(1-t/r),t)|)=1$. This concludes the proof of \eqref{e:psi-bound-levico}. As a consequence, we obtain
	\begin{align*}
		\psi_r(|x'|)v^2(x',0)&\le \frac{8}{r}\int_{0}^{r/8}\psi_r(|(x'(1-t/r),t)|)v^2(x'(1-t/r),t)\,dt\\
		&\qquad+4\int_0^{r/8}\psi_r(|(x'(1-t/r),t)|)|v|(x'(1-t/r),t)|\nabla v|(x'(1-t/r),t)\,dt.
	\end{align*}
	We next integrate in $x'$ over $B_{r}'$ and we use the change of variables 
	$$\Phi(x',t)=(x'(1-t/r),t)\quad\text{defined for}\quad(x',t)\in B_{r}'\times (0, r/8).$$
 Since, for every $(x',t)\in B_{r}'\times (0, r/8)$ we have
 $$(\sfrac12)^{d-1}\le (1-t/r)^{d-1}\le \det \big(D\Phi(x',t)\big)\le 1,$$
	we get $$\int_{B_r'}\psi_r(|x|)w^2(x',0)\,dx'\le 2^{d-1}\left(\frac8r\int_{B_r^+}\psi_r(|x|)w^2(x)\,dx+4\int_{B_r^+}\psi_r(|x|)|w(x)||\nabla w|(x)\,dx\right)$$
	which, by Cauchy-Schwarz, gives the claim. 
\end{proof}

\subsubsection*{Poincar\'e inequality}

Now we show a weighted Poincar\'e-type inequality, or frequency lower bound.
\begin{lemma}[Frequency lower bound]\label{lemma:weightedPoincare}
    There exist constants $c=c(d, \delta_0)>0$ and $r_0=r_0(d, \delta_0)>0$ with the following property. Suppose that $w$ is a solution of \eqref{eqn:thinObstacleRobinRegularity} under \cref{assumptions:wC1aAprioriEst} and \cref{assumptions:startingAssumptions}, then
    \begin{equation}\label{eqn:weightedPoincare}
    r\,D_i(r)\geq c\, H(r) \text{ for every } r \in (0, r_0).
    \end{equation}
\end{lemma}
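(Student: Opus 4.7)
I would prove this weighted Poincaré--type estimate by a blow-up/compactness argument, making crucial use of the branching condition $w(0)=\nabla w(0)=0$ from \cref{assumptions:startingAssumptions} and the regularity of \cref{prop:nonlinearThinObstacleC1aRegularity}.

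Argue by contradiction: if the conclusion fails, there exists a sequence $r_n\downarrow 0$ such that $r_n D_i(r_n)/H(r_n)\to 0$; in particular $H(r_n)>0$ for every $n$. Set $L_n:=\sqrt{H(r_n)/r_n^{d-1}}$ and define the blow-up
\[
\tilde w_n(x):=\frac{w(r_n x)}{L_n}\qquad\text{on } B_1^+.
\]
A direct change of variables in the integrals defining $H$ and $D_i$ yields the rescaled identities $\tilde H_n(1)=1$ and
\[
\tilde D_{i,n}(1)=\frac{r_n D_i(r_n)}{H(r_n)}\xrightarrow[n\to\infty]{}0,
\]
where $\tilde H_n$ and $\tilde D_{i,n}$ are constructed with rescaled coefficients $M_n(x):=M(r_n x)$ and $\mu_n(x):=\mu(r_n x)$.

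Next, apply \cref{prop:nonlinearThinObstacleC1aRegularity} to $w$ on balls of size comparable to $r_n$ and rescale to obtain a uniform $C^{1,\alpha}$ bound for $\tilde w_n$ on $\overline{B_1^+}$. By Ascoli--Arzel\`a, along a subsequence $\tilde w_n\to\tilde w_\infty$ in $C^{1,\beta}(\overline{B_1^+})$ for some $\beta\in(0,\alpha)$. Since $M(r_n x)\to M(0)$ uniformly and $M(0)$ is positive definite, passing to the limit in $\tilde D_{i,n}(1)\to 0$ gives $\int_{B_1^+}\varphi(|x|)|\nabla\tilde w_\infty|^2\,dx=0$, and since $\varphi>0$ on $(0,1)$ this forces $\nabla\tilde w_\infty\equiv 0$ on the connected set $B_1^+$. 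Hence $\tilde w_\infty$ is constant on $B_1^+$, and since $\tilde w_n(0)=w(0)/L_n=0$ by \cref{assumptions:startingAssumptions}, we conclude $\tilde w_\infty\equiv 0$. On the other hand, uniform convergence together with dominated convergence (note that $|\varphi'(|x|)|/|x|$ is integrable on $B_1^+$) yields $1=\tilde H_n(1)\to -\int \varphi'(|x|)\mu(0)\,\tilde w_\infty^2/|x|\,dx=0$, a contradiction.

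The main technical obstacle is the uniform $C^{1,\alpha}$ bound on $\tilde w_n$. Indeed, \cref{prop:nonlinearThinObstacleC1aRegularity} controls $\|\tilde w_n\|_{C^{1,\alpha}(B_1^+)}$ in terms of $\fint_{B_{2r_n}^+}w^2/L_n^2$, so one needs a quantitative comparison between the $L^2$-mass of $w$ on $B_{2r_n}^+$ and the boundary height $H(r_n)/r_n^{d-1}$. This can be established by combining the height integral inequality of \cref{lemma:height-function-integral-inequality} with the three annuli estimates \cref{lemma:excessOnePhaseThreeAnnuliLemma} and \cref{lemma:heightOnePhaseThreeAnnuliLemma} to rule out pathological concentration of $w$ near $|x|\sim r_n$; alternatively, one can use an $L^2$-based normalization (e.g.\ $L_n^2=\fint_{B_{2r_n}^+}w^2$), which gives the $C^{1,\alpha}$ bound for free, and then split into cases depending on whether the rescaled height stays bounded from below or degenerates, ruling out the second case via the non-degeneracy of \cref{remark:wNonDegeneracy}.
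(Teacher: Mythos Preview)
Your blow-up strategy is in the same spirit as the paper's, but there is a genuine gap in the compactness step, and neither of your proposed fixes closes it.

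The paper does not attack the weighted inequality $r D_i(r) \geq c H(r)$ directly. Instead it first proves the \emph{unweighted sphere inequality}
\[
C\, r \int_{B_r^+} |\nabla w|^2 \;\geq\; \int_{\partial B_r^+} w^2
\]
by contradiction, and only then integrates in $r$ against the cutoff to obtain the weighted statement. The point of this detour is precisely the compactness issue you flag: with the sphere normalization $\int_{\partial B_1^+}(w_n')^2 = 1$ and the contradiction hypothesis $\int_{B_1^+}|\nabla w_n'|^2 \to 0$, the standard trace inequality immediately yields a uniform $H^1(B_1^+)$ bound, hence the $L^2$ control needed to feed \cref{prop:nonlinearThinObstacleC1aRegularity} and extract a limit. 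With your height-based normalization $L_n^2 = H(r_n)/r_n^{d-1}$, no such trace inequality is available: $H$ is a weighted \emph{annular} integral, and nothing prevents the $L^2$ mass of $\tilde w_n$ on $B_1^+$ from being unbounded (it can concentrate near $|x|=1$, where $\varphi$ is small and contributes nothing to $\tilde D_{i,n}(1)$).

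Regarding your suggested fixes: the three-annuli lemmas \cref{lemma:excessOnePhaseThreeAnnuliLemma} and \cref{lemma:heightOnePhaseThreeAnnuliLemma} both carry doubling hypotheses of the type $\int_{B_{3r}} \leq C \int_{B_r}$, and these are \emph{not} known a priori at the contradicting scales $r_n$---indeed, the later Whitney decomposition exists precisely to isolate scales at which such doubling holds. As for the $L^2$ normalization $L_n^2 = \fint_{B_{2r_n}^+} w^2$: \cref{prop:nonlinearThinObstacleC1aRegularity} then gives a $C^{1,\alpha}$ bound only on $\overline{B_1^+}$, not on $\overline{B_2^+}$, so you cannot pass to the limit in the normalization $\fint_{B_2^+}\tilde w_n^2 = 1$. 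Your ``degenerate'' case $\tilde H_n(1)\to 0$ corresponds exactly to the mass escaping into $B_2^+\setminus B_1^+$; \cref{remark:wNonDegeneracy} merely asserts $\int_{B_r^+}w^2>0$ and does not exclude this.

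A smaller point: since the constants $c,r_0$ are claimed to depend only on $(d,\delta_0)$, the contradiction sequence must vary over solutions $w_n$ as well as radii $r_n$, as the paper does.
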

\begin{proof}
To begin with, we observe that there exist $C=C(d, \delta_0)>0$ and $r_0 = r_0(d, \delta_0)>0$ such that
\begin{equation}\label{eqn:freqLowerBoundr}
Cr \int_{B_r^+} \vert \nabla w \vert^2 \ge  \int_{\partial B_r^+} w^2 \quad \text{and} \quad \| \mu \|_{L^{\infty}\left( B_r^+ \right)} \le C \text{ for every } r \in (0, r_0),
\end{equation}
where the function $\mu$ is the one defined in \eqref{e:definition-of-mu}. Suppose for a moment that \eqref{eqn:freqLowerBoundr} holds true, then denoting 
\[
f(r) \coloneqq \int_{B_r^+} \vert \nabla w \vert^2,
\]
from \eqref{eqn:freqLowerBoundr} and \eqref{eqn:onePhaseFrequencyHeightDef} we have
\begin{align*}
H(r)  = - \int_0^r \varphi'\left( \frac{\rho }{r} \right) \frac{1}{\rho} \int_{\partial B_{\rho}^+} \mu w^2 &\le - C^2 \int_0^r \varphi'\left( \frac{\rho }{r} \right) f(\rho) \\
& = C^2 r \int_0^r \varphi\left( \frac{\rho }{r} \right) f'(\rho)\\
& = C^2 r \int_0^r \varphi\left( \frac{\rho }{r} \right) \int_{(\partial B_\rho)^+} \vert \nabla w \vert^2\,d\rho\\
&= C^2 r \int_{B_r^+} \varphi\left( \frac{|x|}{r} \right) \vert \nabla w \vert^2\,dx = C^2 r D_i(r),
\end{align*}
for all $r \in (0, r_0)$ and with $C$ as in \eqref{eqn:freqLowerBoundr}, which gives \eqref{eqn:weightedPoincare}.

In order to conclude the proof we are left to show \eqref{eqn:freqLowerBoundr}. The second condition follows immediately from the definition of the function $\mu$ in \eqref{e:definition-of-mu} and \cref{assumptions:wC1aAprioriEst}. Concerning the first condition we can proceed for instance by contradiction.

Suppose that there exist sequences of radii $r_n \to 0$, non-negative constants $c_n \to 0$ and solutions $w_n$ to \eqref{eqn:thinObstacleRobinRegularity} under \cref{assumptions:wC1aAprioriEst} and \cref{assumptions:startingAssumptions} such that
\begin{equation}\label{eqn:poincareRContrad}
r_n \int_{B_{r_n}^+} \vert \nabla w_n \vert^2 \le c_n \int_{\partial B_{r_n}^+} w_n^2.
\end{equation}
Consider the rescaled sequence $\tilde w_n: B_1^+ \to \R$ defined as
\[
\tilde w_n \coloneqq \frac{1}{r_n} w\left(r_n x \right)
\]
and their normalizations
\[
w'_n \coloneqq \frac{\tilde w_n}{\int_{\partial B_1^+} \left( \tilde w_n \right)^2}.
\]
From the contradiction assumption \eqref{eqn:poincareRContrad} and a standard trace inequality we have that
\[
\| w'_n \|_{H^1\left( B_1^+ \right)} \le C
\]
for dome dimensional constant $C = C(d) > 0$. In particular, from the $C^{1, \alpha}$ regularity in \cref{prop:nonlinearThinObstacleC1aRegularity} we also have that
\[
\| w'_n \|_{C^{1, \alpha}\left( B_{\sfrac12}^+ \right)} \le C
\]
for some constant $C = C(d, \delta_0) > 0$ and some $\alpha \in (0, \sfrac12)$. We deduce that there exist a function $w_{\infty} \in H^1(B_1^+) \cap C^{1, \alpha}\left( B_{\sfrac12}^+ \right)$ with the following properties:
\begin{itemize}
    \item[(i)] up to a subsequence, $w'_n \to w_{\infty}$ weakly in $H^1(B_1^+)$ and in $C^{1, \alpha}\left( B_{\sfrac12}^+ \right)$;

    \item[(ii)] $w_{\infty}(0) = 0$ thanks to \cref{assumptions:startingAssumptions};

    \item[(iii)] by the compact embedding of $H^1(B_1^+)$ into $L^2(\partial B_1^+)$ we have
    \[
    \int_{\partial B_1^+} w_{\infty}^2 = 1.
    \]

    \item[(iv)] by the contradiction assumption
    \[
    \int_{B_1^+} \vert \nabla w_{\infty}\vert^2 = 0.
    \]
\end{itemize}
From (iv) we deduce that the function $w_{\infty}$ in constant on $B_1^+$, while from $(i)$ and $(ii)$ we have that $w_{\infty} \equiv 0$. This, however, is in contradiction with $(iii)$, which proves \eqref{eqn:freqLowerBoundr}.
\end{proof}
Combining the trace inequality in \cref{lemma:weighted-trace-inequality} with the frequency lower bound in \cref{lemma:weightedPoincare} gives the following 
\begin{corollary}[Boundary energy control]\label{corollary:boundaryEnergyControl}
There exist constants $c=c(d, \delta_0)>0$ and $r_0=r_0(d, \delta_0)>0$ with the following property. Suppose that $w$ is a solution of \eqref{eqn:thinObstacleRobinRegularity} under \cref{assumptions:wC1aAprioriEst} and \cref{assumptions:startingAssumptions}, then
\begin{itemize}
    \item[(i)]
    \[
       G(r) \le c\, r^2 \, D_i(r)\quad \text{ for every }\quad r \in (0, r_0),
    \]

    \item[(ii)]
    \[
       0 < (1-cr) D_i(r)\leq D(r)\leq (1+cr) D_i(r)\, \text{ for every } r \in (0, r_0).
    \]
\end{itemize}
\end{corollary}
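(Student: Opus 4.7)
The argument has two parts, (i) bounding $G$ by $r^2 D_i$ and (ii) bounding $|D_b|$ by $r D_i$, and both will follow by chaining the three inequalities already established in this subsection.

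\medskip

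The plan for (i) is to combine the height-integral estimate of Lemma 5.2 with the frequency lower bound of Lemma 5.4, exploiting the monotonicity of $D_i$. Specifically, since $\varphi$ is non-increasing and non-negative, the factor $\varphi(|x|/r)$ is non-decreasing in $r$ for each fixed $x$, so $D_i$ is a non-decreasing function of $r$. Then from Lemma 5.2 and Lemma 5.4,
\[
G(r) \le C_\mu \int_0^r H(\rho)\,d\rho \le C_\mu c^{-1} \int_0^r \rho\, D_i(\rho)\,d\rho \le \frac{C_\mu}{2c}\, r^2\, D_i(r),
\]
where $c$ is the constant from Lemma 5.4 and we use monotonicity of $D_i$ in the last step. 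This proves (i) with $c = c(d,\delta_0)>0$ provided $r<r_0(d,\delta_0)$.

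\medskip

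For (ii), the key observation is that the function $Q$ constructed in Proposition 2.3 vanishes identically on $B_1'$: indeed $Q(x',x_d) = (1-|\nabla m|^2(\Phi(x',x_d)))\det(D\Phi)$ and $|\nabla m|=1$ on $\partial\Omega_m$, while $\Phi(x',0)\in\partial\Omega_m$. Since $m$ is analytic in a neighborhood of the origin and $\Phi$ is smooth, $\partial_d Q$ is a smooth bounded function on $B_1'$. Therefore, directly from the definition of $D_b$ and the weighted trace inequality of Lemma 5.3,
\[
|D_b(r)| \le C\int_{B_r'}\varphi(|x|/r)\, w^2(x',0)\,dx' \le C_d\, C\left(\frac{1}{r}G(r) + r\, D_i(r)\right).
\]
Applying (i) to the first term gives $|D_b(r)| \le C' r\, D_i(r)$, from which
\[
(1-C'r)\,D_i(r) \le D(r) = D_i(r) + D_b(r) \le (1+C'r)\,D_i(r).
\]
Finally, the strict positivity $(1-cr)D_i(r)>0$ for $r<r_0$ small follows because $D_i(r)>0$ for all $r\in(0,1)$: by the ellipticity of $M$ and the non-degeneracy Remark 3.3, the integrand of $D_i$ is non-negative and does not vanish identically on any ball. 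Shrinking $r_0$ if necessary so that $C'r_0<1$ completes the proof.

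\medskip

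The argument is essentially bookkeeping once the three earlier inequalities (Lemmas 5.2, 5.3, 5.4) are in hand; the only subtle point worth flagging is the vanishing of $Q$ on the boundary hyperplane, which is what makes the boundary contribution $D_b$ a genuinely lower-order perturbation of $D_i$ rather than comparable to it.
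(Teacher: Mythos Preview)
Your proof is correct and follows essentially the same route as the paper: part (i) is obtained by chaining Lemma~5.2, Lemma~5.4, and the monotonicity of $D_i$, and part (ii) by bounding $|D_b(r)|$ via the weighted trace inequality (Lemma~5.3) and then applying (i). One minor remark: the vanishing of $Q$ on $B_1'$ is not actually the relevant fact for (ii) --- all that is used is that $\partial_d Q$ is smooth and bounded, which holds regardless --- so you can drop that sentence without loss.
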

\begin{proof}
Starting with point (i), from \cref{lemma:height-function-integral-inequality} and \cref{lemma:weightedPoincare}
\[
G(r) \le C \int_0^r\,H(\rho)\,d\rho \le C \int_{0}^r\rho D_i(\rho)\,d\rho \le C r^2 D_i(r)
\]
for some constant $c=c(d, \delta)$, where in the last inequality we have used the monotonicity of $D_i(r)$. Concerning point (ii), by \cref{lemma:height-function-integral-inequality} and point (i) we have that 
\begin{align*}
D_b(r)&\leq \frac{c}{2}\,r\, D_i(r)+\frac{c}{2r}\, G(r) \le c\,r\, D_i(r),
\end{align*}
which, combined with \eqref{eqn:onePhaseEnergyDef}, immediately implies point (ii).
\end{proof}

\subsection{Grid construction}

For each $j\in \N$ we let $\mathcal{C}_j$ denote the collection of closed cubes in $\R^d$ defined by
\[
L:=[a_1-l_j, \, a_1+l_j]\times  \dots\times [a_d-l_j, \, a_d+l_j]\subset B_1 \,,
\]
where $2l(L):=2l_j=3^{1-j}$ is the side-length of the cube, and $a_i \in 3^{1-j} \mathbb Z$ for every $i = 1, \dots, d$. In the following we will denote with $a(L)=(a_1,\dots,a_d)$ the center of the cube $L\in \mathcal C_j$ and by $B_L$ the ball 
$$B_L:=B_{3 l(L)}\left( a(L) \right)\cap B_1^+.$$ 
Next we set $\mathcal C:=\bigcup_{j\in \N}\mathcal C_j$ and if $H$
and $L$ are two cubes in $\mathcal C$ with $H \subset  L$, then we call $L$ an ancestor of $H$ and $H$ a descendant
of $L$. When in addition $l(L) = 3l(H)$, $H$ is a son of $L$ and $L$ the father of $H$. 

We define a Whitney decomposition of $[-1,1]^d\cap\{x_d>0\}$ as follows.

\begin{definition}[Whitney decomposition]\label{def:whitneyDecompDef}
Let $C_0 > 0$ and $\alpha \in (0, \sfrac12)$ be fixed constants.
We define the family of cubes $\mathcal W= \mathcal W^e\cup \mathcal W^h$ 
inductively as unions 
$$\mathcal W^{e}=\bigcup_jW^{e}_j\qquad\text{and}\qquad\mathcal W^{h}=\bigcup_jW^{h}_j,$$
starting from $W_0=\emptyset$. If no ancestor of $L\in \mathcal C_j$ is in $\mathcal W$, then
\begin{enumerate}
    \item $L\in \mathcal W^e_j$ if
\begin{equation}\label{eqn:excessDecayPropertyDefGrid}
\int_{B_L} \vert \nabla w \vert^2 \geq C_0 \,l(L)^{d+2\alpha}\,, 
\end{equation}    
\item $L\in \mathcal W^h_j$ if $L\notin \mathcal W^e_j$ and 
\begin{equation}\label{eqn:heightDecayPropertyDefGrid}
\int_{B_L}  w^2 \geq C_0 \, l(L)^{d+2+2\alpha} \,.
\end{equation}
\end{enumerate}
If none of the above occurs we say that $L\in \mathcal S_j$. Finally we set 
\[
\Gamma:=\bigcap_{j\ge 1}\bigcup_{L\in \mathcal S_j}L\,.
\]
\end{definition}
We have the following 
\begin{lemma}[Properties of the Whitney decomposition]\label{lem:whitney}
For all constants $N_0 \in \N_{>0}$ and $c_0>0$ there exist constants $\delta_1 = \delta_1(d, C_0, N_0, c_0,  \delta_0)>0$ and $C=C(d, C_0, N_0, \delta_0)>0$ with the following properties. Suppose that the function $w$ is a solution of \eqref{eqn:thinObstacleRobinRegularity} under \cref{assumptions:wC1aAprioriEst} and \cref{assumptions:startingAssumptions}. If the constant $\delta$ from \cref{assumptions:wC1aAprioriEst} is such that $\delta \in (0, \delta_1)$, then the following properties hold:
\begin{enumerate}[(a)]
\item $([-1,1]^{d-1}\times [0,1])=\Gamma\cup \bigcup_{L\in \mathcal W}L$ and the cubes in $\mathcal W$ have disjoint interiors;
       
\item if $x_0\in \Gamma$ then
\begin{equation}\label{eq:contactset}
w(x_0)=0\qquad \text{and}\qquad \nabla w(x_0)=0\,;   
\end{equation}

\item $\mathcal W_j=\emptyset$ for every $j\leq N_0$;

\item if $L\in \mathcal W$, then
\begin{equation}\label{eqn:whitneyCubeSideBoundCenter}
   l(L) \le c_0 \vert a(L) \vert;
\end{equation}

\item if $L\in \mathcal W^e$ and $H$ is the father of $L$, then 
\begin{equation}\label{eqn:excessOnePhaseL2SameScale}
\int_{B_H} w^2 \le C \, l(L)^2 \int_{B_L} \vert \nabla w \vert^2 \quad \text{and} \quad \int_{B_H} \vert \nabla w \vert^2 \le C \int_{B_L} \vert \nabla w \vert^2;
\end{equation}

\item if $L\in \mathcal W^h$ and $H$ is the father of $L$, then 
\begin{equation}\label{eqn:heightOnePhaseL2SameScale}
\int_{B_H} w^2 \le C \int_{B_L} w^2 \quad \text{and} \quad \int_{B_H} \vert \nabla w \vert^2 \le \frac{C}{l(L)^2} \int_{B_L} w^2 .
\end{equation}
\end{enumerate}
\end{lemma}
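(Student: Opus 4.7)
The plan is to verify each assertion essentially by unwinding the construction in \cref{def:whitneyDecompDef} together with the a priori $C^{1,\alpha}$ bound and the branching condition. Parts (a) and (b) are the geometric/structural content; (e) and (f) follow from a father--son comparison at the threshold; and (c), (d) are the quantitative statements where the smallness of $\delta$ enters.

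For (a), disjointness is built into the construction since a cube enters $\mathcal W$ only if no strict ancestor already lies there, and every point either eventually belongs to some $L\in\mathcal W$ or stays in $\mathcal S_j$ for all $j$, i.e.\ lies in $\Gamma$. For (b), if $x_0\in\Gamma$, then for each $j$ there exists $L_j\in\mathcal S_j$ with $x_0\in L_j$ and $l(L_j)\to 0$; the defining inequalities $\int_{B_{L_j}}|\nabla w|^2<C_0 l(L_j)^{d+2\alpha}$ and $\int_{B_{L_j}}w^2<C_0 l(L_j)^{d+2+2\alpha}$ together with the continuity of $w$ and $\nabla w$ from \cref{prop:nonlinearThinObstacleC1aRegularity} force $w(x_0)=0$ and $\nabla w(x_0)=0$, giving \eqref{eq:contactset}.

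Parts (e) and (f) rest on the single observation that whenever $L\in\mathcal W_j$, the father $H$ necessarily lies in $\mathcal S_{j-1}$: indeed, no ancestor of $L$ lies in $\mathcal W$, and the ancestors of $H$ are ancestors of $L$, so $H$ was itself a valid candidate at generation $j-1$ that failed both the excess and the height test. Substituting $l(H)=3l(L)$ into the $\mathcal S_{j-1}$ bounds for $H$ and comparing with the triggering bound for $L$ yields \eqref{eqn:excessOnePhaseL2SameScale} and \eqref{eqn:heightOnePhaseL2SameScale}, with absolute constants of the form $3^{d+2\alpha}$ and $3^{d+2+2\alpha}$.

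For (c) and (d) we use that \cref{assumptions:wC1aAprioriEst} and \cref{assumptions:startingAssumptions} give the pointwise decay
\[
|\nabla w(x)|\le \delta|x|^{\alpha},\qquad |w(x)|\le C\delta|x|^{1+\alpha},
\]
and hence
\[
\int_{B_L}|\nabla w|^2 \le C\delta^2 l(L)^d\bigl(|a(L)|+3l(L)\bigr)^{2\alpha},\qquad \int_{B_L}w^2\le C\delta^2 l(L)^d\bigl(|a(L)|+3l(L)\bigr)^{2+2\alpha}.
\]
For (c), when $l(L)=3^{1-j}\ge 3^{1-N_0}$ and $|a(L)|+3l(L)\lesssim 1$, both quantities are strictly below the respective thresholds \eqref{eqn:excessDecayPropertyDefGrid}--\eqref{eqn:heightDecayPropertyDefGrid} provided $\delta<\delta_1(d,C_0,N_0,\delta_0)$, so no cube of generation $\le N_0$ can be added. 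For (d), a cube $L\in\mathcal W$ must saturate at least one of \eqref{eqn:excessDecayPropertyDefGrid}, \eqref{eqn:heightDecayPropertyDefGrid}, so the display above yields
\[
l(L)^{2\alpha}\le C\,\frac{\delta^{2}}{C_0}\bigl(|a(L)|+3l(L)\bigr)^{2\alpha}.
\]
For $\delta$ small enough (depending on $c_0$, $C_0$, $d$, $\alpha$) the coefficient $C\delta^2/C_0$ is small, and one absorbs the $3l(L)$ term into the left-hand side to conclude $l(L)\le c_0|a(L)|$.

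The only delicate point is bookkeeping the dependence of $\delta_1$ and the constant $C$ on the parameters $(d,C_0,N_0,c_0,\delta_0)$; there is no conceptual obstacle, only a chain of elementary estimates. In particular the argument for (d) is where the branching assumption $w(0)=|\nabla w(0)|=0$ is essential, since without it the pointwise decay in $|x|^{\alpha}$ is unavailable and cubes could accumulate away from the origin.
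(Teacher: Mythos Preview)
Your proof is correct and follows essentially the same approach as the paper. In fact, for (b), (e), (f) you spell out explicitly the father--son comparison that the paper simply calls ``a direct consequence of the construction,'' and for (c), (d) your argument is the paper's argument (the paper argues (d) by contraposition but the content is identical; your displayed inequality $l(L)^{2\alpha}\le C\delta^2/C_0\,(|a(L)|+3l(L))^{2\alpha}$ is literally the excess case, but the height case gives the analogous bound with exponent $2+2\alpha$ and the absorption works the same way).
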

\begin{proof}
The claims (a), (b), (e) and (f) are a direct consequence of the construction from the above \cref{def:whitneyDecompDef}. In order to prove (c), we notice that \cref{assumptions:wC1aAprioriEst} implies that for any cube $L$
$$\int_{B_L}|\nabla w|^2\le \|\nabla w\|_{L^\infty(B_L)}^2|B_L|\le \delta_1^2|B_L|\le C_d\delta_1^2l(L)^d.$$
Thus, by choosing $\delta_1$ small enough, such that 
$$C_d\delta_1^23^{-dN_0}\le C_03^{-(d+2\alpha)N_0},$$ we get that for $l(L)\ge 3^{-N_0}$ the stopping condition \eqref{eqn:excessDecayPropertyDefGrid} is not verified, so $L\notin \mathcal W^e_j$ for $j=1,\dots,N_0$. The proof of the fact that for $\delta_1$ small enough $L\notin \mathcal W^h_j$ for $j=1,\dots,N_0$ is analogous.

Finally, we prove (d). Suppose that $L\in\mathcal W^e$ and that
$l(L)\ge c_0|a(L)|.$
Then the stopping condition implies 
$$\int_{B_L} \vert \nabla w \vert^2 \geq C_0 \,l(L)^{d+2\alpha}.$$
On the other hand, \cref{assumptions:wC1aAprioriEst} and the fact that $B_L$ is contained in the ball of center $0$ and radius $a(L)+3l(L)$ implies that
\begin{align*}
\int_{B_L} \vert \nabla w \vert^2 &\leq |B_L|\delta_1^2\,(a(L)+3l(L))^{2\alpha}\\
&\leq \delta_1^2 \,C_d\,l(L)^{d}\,(a(L)+3l(L))^{2\alpha} \leq \delta_1^2\,C_d\,\left(3+\frac1{c_0}\right)^{2\alpha}l(L)^{d+2\alpha},
\end{align*}
which contradicts the stopping condition when $\delta_1$ is small enough. Analogously, it is  not possible to have $L\in \mathcal W_h$ and $l(L)\ge c_0|a(L)|.$
\end{proof}

\subsection{Estimates: proof of \cref{prop:errors}}
In order to prove \cref{prop:errors}, we will use repeatedly the next lemma, which deals with the uniform estimates on the cubes. This will allow us to treat the error terms arising from the nonlinearities and is the key for the frequency monotonicity. Its proof is a simple consequence of the grid construction, which guarantees that the doubling assumptions of \cref{lemma:excessOnePhaseThreeAnnuliLemma} are satisfied, and the regularity of minimizers.
\begin{lemma}[$L^{\infty}-L^2$ estimates on the cubes]\label{lemma:energyEstCubesHD}
There exist constants $R=R(d, \delta_0)>0$, $\kappa=\kappa(d, \delta_0)>0$ and $C = C(d, \delta_0)>0$ with the following property. Suppose that $w$ is a solution of \eqref{eqn:thinObstacleRobinRegularity} under \cref{assumptions:wC1aAprioriEst} and \cref{assumptions:startingAssumptions}. Then, for all cubes $L \in \mathcal{W}$ with
\[
L \cap B_r^+ \neq \emptyset,
\]
the following estimates hold:
\begin{itemize}
    \item[(i)] 
    \[
    \| w \|_{L^{\infty}\left( L \right)}\le C D_i(r)^{\kappa} \quad \text{for all } r \in (0, R),
    \]

    \item[(ii)]
    \[
    \| \nabla w \|_{L^{\infty}\left( L \right)} \le C D_i(r)^{\kappa} \quad \text{for all } r \in (0, R).
    \]
\end{itemize}
\end{lemma}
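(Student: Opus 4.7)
The argument combines three ingredients: the Hölder regularity of Proposition \ref{prop:nonlinearThinObstacleC1aRegularity}, the grid properties of Lemma \ref{lem:whitney}, and the three-annuli Lemmas \ref{lemma:excessOnePhaseThreeAnnuliLemma}--\ref{lemma:heightOnePhaseThreeAnnuliLemma}. I would proceed in three steps: first reduce pointwise bounds on $L$ to $L^2$ averages on a ball of radius $\sim l(L)$, then upgrade those $L^2$ averages to a power of $l(L)$ via the Whitney stopping mechanism, and finally exchange the factor $l(L)^\alpha$ for $D_i(r)^\kappa$ through a quantitative unique continuation estimate.

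For the first step, I would apply Proposition \ref{prop:nonlinearThinObstacleC1aRegularity} on a ball of radius proportional to $l(L)$ centered at $a(L)$, using the boundary version when $L$ touches $\{x_d=0\}$ and the interior version otherwise. Since $L$ is contained in a ball of radius $c_d\, l(L)$ around $a(L)$, this yields
\[
\|w\|_{L^\infty(L)} \le C \Big(\fint_{B_L} w^2\Big)^{1/2}, \qquad \|\nabla w\|_{L^\infty(L)} \le \frac{C}{l(L)}\Big(\fint_{B_L} w^2\Big)^{1/2}.
\]
Since $L \in \mathcal W$, its father $H$ lies in $\mathcal S$, so both stopping criteria \eqref{eqn:excessDecayPropertyDefGrid} and \eqref{eqn:heightDecayPropertyDefGrid} fail on $B_H$. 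As $B_L$ can be covered by a controlled number of balls of the form $B_{\hat H}$ with $\hat H$ a uniformly bounded number of generations above $L$ (all still in $\mathcal S$), this upgrades to
\[
\int_{B_L} w^2 \le C\, l(L)^{d+2+2\alpha}, \qquad \int_{B_L} |\nabla w|^2 \le C\, l(L)^{d+2\alpha}.
\]
Plugging back, one gets the intermediate estimates $\|w\|_{L^\infty(L)} \le C\, l(L)^{1+\alpha}$ and $\|\nabla w\|_{L^\infty(L)} \le C\, l(L)^{\alpha}$.

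The remaining and central task is to show that $l(L)^\alpha \le C\, D_i(r)^\kappa$. Here I would exploit that $L$ itself is a stopping cube, so either $\int_{B_L}|\nabla w|^2 \ge C_0\, l(L)^{d+2\alpha}$ (if $L \in \mathcal W^e$) or $\int_{B_L} w^2 \ge C_0\, l(L)^{d+2+2\alpha}$ (if $L \in \mathcal W^h$). I then propagate this lower bound from scale $l(L)$ up to scale $r$ by iterating Lemma \ref{lemma:excessOnePhaseThreeAnnuliLemma} (respectively Lemma \ref{lemma:heightOnePhaseThreeAnnuliLemma}) along the chain of ancestors of $L$. The key point is that the doubling hypotheses of these lemmas can be verified at every intermediate dyadic scale: since all ancestors of $L$ lie in $\mathcal S$ and therefore fail both stopping criteria, one obtains precisely the required upper bounds on the intermediate $L^2$ averages. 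After iterating across the $\sim \log_3(r/l(L))$ scales and invoking Corollary \ref{corollary:boundaryEnergyControl} in the height case, one concludes
\[
l(L)^{d+2\alpha} \le C\, D_i(r) \qquad \text{or} \qquad l(L)^{d+2+2\alpha} \le C\, r^2 D_i(r),
\]
and in either case $l(L)^\alpha \le C D_i(r)^\kappa$ for some $\kappa = \kappa(d,\alpha) > 0$, completing the proof.

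The main obstacle is precisely this final step, and more specifically the verification of the doubling hypotheses of the three-annuli lemmas at every intermediate scale. This is exactly where the Calderón–Zygmund flavour of the Whitney construction plays its role: the fact that every ancestor of $L$ fails both stopping criteria supplies the upper bounds on the intermediate $L^2$ integrals needed to activate Lemma \ref{lemma:excessOnePhaseThreeAnnuliLemma} or Lemma \ref{lemma:heightOnePhaseThreeAnnuliLemma}, while the non-degeneracy of $w$ (Remark \ref{remark:wNonDegeneracy}) rules out the trivial cancellations. The resulting exponent $\kappa$ depends only on $\alpha$, $d$, and the constant $\gamma$ from the three-annuli lemmas.
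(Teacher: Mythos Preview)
Your first two steps are fine and agree with the paper: the regularity estimate together with the fact that the father of $L$ lies in $\mathcal S$ does yield $\|w\|_{L^\infty(L)}\le C\,l(L)^{1+\alpha}$ and $\|\nabla w\|_{L^\infty(L)}\le C\,l(L)^{\alpha}$.

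The gap is in your third step. You propose to iterate the three-annuli lemma along the chain of ancestors $H_0=L\subset H_1\subset H_2\subset\cdots$ and claim that the doubling hypotheses are verified at every scale because each $H_j\in\mathcal S$. But membership in $\mathcal S$ gives only \emph{upper} bounds, $\int_{B_{H_j}}|\nabla w|^2< C_0\,l(H_j)^{d+2\alpha}$, while the doubling hypothesis \eqref{eqn:excessDecayPropertyThreeAnnuli} is a \emph{ratio} bound and therefore also requires a lower bound on the denominator $\int_{B_{l(H_j)}}|\nabla w|^2$. The only lower bound available comes from the stopping cube $L$ itself, namely $\int_{B_L}|\nabla w|^2\ge C_0\,l(L)^{d+2\alpha}$; carried up to scale $l(H_j)=3^j l(L)$ this yields a doubling constant of order $3^{j(d+2\alpha)}$, which blows up with $j$. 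Since the constant $\gamma$ in Lemma~\ref{lemma:excessOnePhaseThreeAnnuliLemma} depends on the doubling constant $C$, the iteration degenerates and does not produce a uniform bound.

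The paper's route is in fact simpler and avoids any iteration. The three-annuli lemma is applied \emph{once}, on the cube $L$ itself, with the doubling verified exactly through Lemma~\ref{lem:whitney}(e) (resp.\ (f)): this is the comparison between $L$ and its father $H$, where one does have a genuine lower bound from the stopping condition on $L$. The conclusion \eqref{eqn:excess3AnnuliLemmaCubes} then says that \emph{every} sub-ball $B_{\eta\,l(L)}(x')\subset B_L$ carries at least $\gamma\int_{B_L}|\nabla w|^2\ge \gamma C_0\,l(L)^{d+2\alpha}$ of the energy. The key geometric point---guaranteed by the choice $l(L)\le c_0|a(L)|$ in Lemma~\ref{lem:whitney}(d)---is that one can pick the center $x'$ so that this small ball sits at distance $\gtrsim l(L)$ from $\partial B_r$. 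On such a ball the cutoff $\varphi(|x|/r)$ is bounded below (uniformly in $\upsilon$), whence
\[
D_i(r)\ \ge\ \int_{B_{\eta l(L)}(x')}\varphi\Big(\tfrac{|x|}{r}\Big)|\nabla w|^2\ \gtrsim\ l(L)^{d+2\alpha+1},
\]
and the desired $l(L)^\alpha\le C\,D_i(r)^\kappa$ follows with $\kappa=\alpha/(d+2\alpha+1)$. The case $L\in\mathcal W^h$ is handled analogously via Lemma~\ref{lemma:heightOnePhaseThreeAnnuliLemma} and Corollary~\ref{corollary:boundaryEnergyControl}(i). The three-annuli lemma is thus used not to climb scales but to relocate the energy away from the region where the cutoff in $D_i(r)$ vanishes.
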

\begin{proof}
To begin with, let us choose $C_0 = 1$ in \cref{def:whitneyDecompDef}, $N_0 = 1$ and $c_0 = 1/10$ in \cref{lem:whitney} and $\delta>0$ in \cref{assumptions:wC1aAprioriEst} small enough (depending now only on $d$ and $\delta_0$) so that \eqref{eqn:whitneyCubeSideBoundCenter} holds true.

We distinguish two cases:
\begin{enumerate}
\item Suppose first that the cube $L$ is of type excess, i.e. that $L \in \mathcal W^e$. From \cref{prop:nonlinearThinObstacleC1aRegularity}, we have the estimates
\begin{equation}\label{eqn:excessOnePhaseL2LinftySameScalew}
\| w \|_{L^{\infty}\left(L \right)} \le C l(L) \left( \frac{1}{l(L)^d} \int_{L} \vert \nabla w \vert^2  \right)^{\sfrac12} \le C l(L)^{1+\alpha}
\end{equation}
and
\begin{equation}\label{eqn:excessOnePhaseL2LinftySameScaleNablaw}
\| \nabla w \|_{L^{\infty}\left(L \right)} \le C \left( \frac{1}{l(L)^d} \int_{L} \vert \nabla w \vert^2  \right)^{\sfrac12} \le C l(L)^{\alpha},
\end{equation}
for some constant $C=C(d, \delta_0)>0$. Thanks to \eqref{eqn:excessOnePhaseL2SameScale}, the hypotheses of \cref{lemma:excessOnePhaseThreeAnnuliLemma} are satisfied, with the corresponding constant $C=C(d, \delta_0)>0$. Hence, let us choose $\eta = 1/10$ in \cref{lemma:excessOnePhaseThreeAnnuliLemma} and let $R(d, \delta_0) = r_0(d, \eta, C, \delta_0)$, where $r_0$ is the one from \cref{lemma:excessOnePhaseThreeAnnuliLemma}. Moreover, we choose the center $x'$ so that
\begin{equation}\label{eqn:excessOnePhaseBoundaryCotrolBall}
\dist\left(B_{3\eta l(L)}(x'), \, \partial B_r^+ \right) \ge l(L)/2.
\end{equation}
Notice we need to choose $x' \neq a(L)$ only when $B_L \cap \partial B \cap \{ x_d \ge 0 \} \neq \emptyset$.

Using \cref{lemma:excessOnePhaseThreeAnnuliLemma} with the above choices, we can estimate
\begin{equation}\label{eqn:excessOnePhaseDoublingCube}
    \int_{B_{3\eta l(L)}(x')} \vert \nabla w \vert^2 \ge \gamma \int_{B_L} \vert \nabla w \vert^2 \ge \gamma l(L)^{d+2\alpha},
\end{equation}
where $\gamma = \gamma(d, \delta_0)>0$. Moreover, thanks to \eqref{eqn:excessOnePhaseBoundaryCotrolBall}, from \eqref{eqn:onePhaseCutoffDef} we see that (independently of $\upsilon$)
\begin{equation}\label{eqn:excessOnePhaseBoundaryCotrolCutoff}
\varphi\left( \frac{\vert x \vert}{r} \right) \ge l(L)/2.
\end{equation}
Combining \eqref{eqn:excessOnePhaseDoublingCube} with \eqref{eqn:excessOnePhaseBoundaryCotrolCutoff} and the definition \eqref{e:definition-of-D-interior} gives
\begin{equation}\label{eqn:excessOnePhaseDoublingCube2}
    D_i(r) \ge \int_{B_{3\eta l(L)}(x')} \varphi\left( \frac{\vert x \vert}{r} \right) \vert \nabla w \vert^2 \ge \frac{\gamma}{2} l(L)^{d+2\alpha+1},
\end{equation}
Plugging \eqref{eqn:excessOnePhaseDoublingCube2} into \eqref{eqn:excessOnePhaseL2LinftySameScalew} and \eqref{eqn:excessOnePhaseL2LinftySameScaleNablaw} we obtain
\[
\| w \|_{L^{\infty}\left(L \right)} \le C \left( \frac{2}{\gamma} \right)^{\kappa} D_i^{\kappa} \quad \text{with } \kappa = \frac{1+\alpha}{d+2\alpha+1},
\]
and
\[
\| \nabla w \|_{L^{\infty}\left(L \right)} \le C \left( \frac{2}{\gamma} \right)^{\kappa} D_i^{\kappa} \quad \text{with } \kappa = \frac{\alpha}{d+2\alpha+1},
\]
which concludes the proof of cases (i) and (ii) if $L \in \mathcal W^e$ is a cube of type excess.

\item Suppose now that $L$ is a cube of type excess, i.e. that $L \in \mathcal W^h$. We can proceed similarly as in the previous case. From \cref{prop:nonlinearThinObstacleC1aRegularity}, we have analogous estimates estimates
\begin{equation}\label{eqn:heightOnePhaseL2LinftySameScalew}
\| w \|_{L^{\infty}\left(L \right)} \le C \left( \frac{1}{l(L)^d} \int_{L} w^2  \right)^{\sfrac12} \le C l(L)^{1+\alpha}
\end{equation}
and
\begin{equation}\label{eqn:heightOnePhaseL2LinftySameScaleNablaw}
\| \nabla w \|_{L^{\infty}\left(L \right)} \le \frac{C}{l(L)} \left( \frac{1}{l(L)^d} \int_{L} w^2  \right)^{\sfrac12} \le C l(L)^{\alpha},
\end{equation}
for some constant $C=C(d, \delta_0)>0$. Using this time \cref{lemma:heightOnePhaseThreeAnnuliLemma}, with the same choices of parameters as in case 1, we deduce the inequality
\begin{equation}\label{eqn:heightOnePhaseDoublingCube2}
    G(r) \ge \int_{B_{3\eta l(L)}(x')} \varphi\left( \frac{\vert x \vert}{r} \right) w^2 \ge \frac{l(L)}{2} \gamma \int_{B_L} w^2 \ge \frac{\gamma}{2} l(L)^{d+2\alpha+2},
\end{equation}
for some constant $\gamma = \gamma(d, \delta_0)$. Moreover, \cref{lemma:height-function-integral-inequality} and \cref{lemma:weightedPoincare} imply that
\begin{equation}\label{eqn:onePhaseFrequencyL2DboundCubes}
    G(r) \le C r^2 D_i(r) \quad \text{for all } r \in (0, r_0),
\end{equation}
where $C=C(d, \delta_0)>0$ and $r_0 = r_0(d, \delta_0)>0$. In particular, the choice of $R=R(d, \delta_0)>0$ we can meke in this case is the minimum between the radii $r_0$ from \cref{lemma:weightedPoincare} and \cref{lemma:heightOnePhaseThreeAnnuliLemma}. Combining \eqref{eqn:onePhaseFrequencyL2DboundCubes} and \eqref{eqn:heightOnePhaseDoublingCube2} we get
\begin{equation}\label{eqn:heightOnePhaseDoublingCube3}
    C r^2 D_i(r) \ge \frac{\gamma}{2} l(L)^{d+2\alpha+2},
\end{equation}
Now, using \eqref{eqn:heightOnePhaseDoublingCube3} together with \eqref{eqn:heightOnePhaseL2LinftySameScalew} and \eqref{eqn:heightOnePhaseL2LinftySameScaleNablaw}, we obtain the estimates
\[
\| w \|_{L^{\infty}\left(L \right)} \le C^{1+\kappa} \left( \frac{2}{\gamma} \right)^{\kappa} r^{2 \kappa} D_i^{\kappa} \quad \text{with } \kappa = \frac{1+\alpha}{d+2\alpha+2},
\]
and
\[
\| \nabla w \|_{L^{\infty}\left(L \right)} \le C^{1+\kappa} \left( \frac{2}{\gamma} \right)^{\kappa} r^{2 \kappa} D_i^{\kappa} \quad \text{with } \kappa = \frac{\alpha}{d+2\alpha+2},
\]
which concludes the proof of cases (i) and (ii) if $L \in \mathcal W^h$.
\end{enumerate}
\end{proof}

\subsubsection{Outer errors: estimates of $E^{o, k}(r)$}
The aim of this subsection is to estimate the outer error terms $E^{o, k}$ in \cref{prop:errors}. We begin with the estimate \eqref{eqn:frequencyErrorsE^{o,1}Est}.
\begin{lemma}[Estimate of $E^{o,1}$]\label{lemma:frequencyErrorsE^{o,1}Est}
There exist constants $R=R(d, \delta_0)>0$, $\kappa=\kappa(d, \delta_0)>0$ and $C = C(d, \delta_0)>0$ with the following property. Suppose that $w$ is a solution of \eqref{eqn:thinObstacleRobinRegularity} under \cref{assumptions:wC1aAprioriEst} and \cref{assumptions:startingAssumptions}. Then,
\begin{align*}
    \left\vert E^{o, 1}(r) \right\vert \le C D_i(r)^{1+\kappa}\quad \text{for all } r \in (0, R).
\end{align*}
\end{lemma}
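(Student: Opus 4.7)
The plan is to reduce the estimate to a bound on the bulk integral
\[
\int_{B_r^+}\varphi\!\left(\frac{|x|}{r}\right)\bigl(|w|\,|\nabla w|^2+|\nabla w|^3\bigr),
\]
then split the domain using the Whitney decomposition of \cref{def:whitneyDecompDef} and apply the uniform $L^\infty$ estimates of \cref{lemma:energyEstCubesHD} cube by cube. The first step is to note that, by (a) of \cref{lem:whitney}, the strip $[-1,1]^{d-1}\times[0,1]$ decomposes as $\Gamma\cup\bigcup_{L\in\mathcal W}L$ with interiors disjoint. On $\Gamma$ one has $w=0$ and $\nabla w=0$ by \eqref{eq:contactset}, so $\Gamma$ contributes nothing to the integrand, and the integral reduces to a sum over those cubes $L\in\mathcal W$ that meet $B_r^+$.

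On each such cube $L$, the plan is to apply \cref{lemma:energyEstCubesHD} to obtain $\|w\|_{L^\infty(L)}\le C D_i(r)^{\kappa}$ and $\|\nabla w\|_{L^\infty(L)}\le C D_i(r)^{\kappa}$ (choosing $\kappa$ as the minimum of the exponents produced by the excess and height regimes, and $r<R$ accordingly). Pulling these pointwise bounds out of the integral gives
\[
\int_L\varphi\!\left(\tfrac{|x|}{r}\right)\bigl(|w|\,|\nabla w|^2+|\nabla w|^3\bigr)\le C D_i(r)^{\kappa}\int_L\varphi\!\left(\tfrac{|x|}{r}\right)|\nabla w|^2.
\]
Summing over all $L\in\mathcal W$ that meet $B_r^+$ and using that the cubes have disjoint interiors, the right-hand side telescopes to $CD_i(r)^{\kappa}\,D_i(r)$, which combined with \eqref{eq:bulkouter1} yields the desired bound $|E^{o,1}(r)|\le CD_i(r)^{1+\kappa}$.

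The main (and essentially only) obstacle is already absorbed into \cref{lemma:energyEstCubesHD}: the stopping conditions in the Whitney construction ensure that each cube satisfies the doubling hypotheses of the three-annuli lemmas (\cref{lemma:excessOnePhaseThreeAnnuliLemma} and \cref{lemma:heightOnePhaseThreeAnnuliLemma}), which are what convert the local $C^{1,\alpha}$ regularity of \cref{prop:nonlinearThinObstacleC1aRegularity} into the crucial pointwise estimates in terms of the global quantity $D_i(r)$. Once this machinery is in place, the argument for $E^{o,1}$ is a bookkeeping step; the minor care required is only in choosing $\kappa$ small enough to work uniformly over both $\mathcal W^e$ and $\mathcal W^h$, and in checking that the center $x'$ in \cref{lemma:energyEstCubesHD} can always be chosen within $B_r^+$ when $L\cap B_r^+\neq\emptyset$, using \eqref{eqn:whitneyCubeSideBoundCenter}.
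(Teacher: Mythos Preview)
Your proposal is correct and follows essentially the same approach as the paper: start from \eqref{eq:bulkouter1}, decompose $B_r^+$ via the Whitney construction, drop $\Gamma$ using \eqref{eq:contactset}, pull out $\|w\|_{L^\infty(L)}$ and $\|\nabla w\|_{L^\infty(L)}$ via \cref{lemma:energyEstCubesHD}, and sum the remaining $\int_L\psi_r|\nabla w|^2$ over disjoint cubes. The only cosmetic difference is that the paper phrases the intermediate bound in terms of $D(r)^{\kappa}$ and then invokes \cref{corollary:boundaryEnergyControl} at the end to pass to $D_i(r)$, whereas you work with $D_i(r)$ throughout (which is what \cref{lemma:energyEstCubesHD} actually delivers); your route is slightly more direct.
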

\begin{proof}
We recall that by \eqref{eq:bulkouter1} we have  
\begin{align*}   
& |E^{o, 1}(r)|\leq C  \int_{B_r^+} \varphi\left( \frac{\vert x \vert}{r} \right) \left(|w|\, \vert \nabla w \vert^2 + \vert \nabla w \vert^3\right)=C  \int_{B_r^+} \psi_r(x) \left(|w|\, \vert \nabla w \vert^2 + \vert \nabla w \vert^3\right),
\end{align*}
so by the Whitney decomposition and \eqref{eq:contactset} we can estimate
\begin{align*}
    \begin{split}
        |E^{o, 1}(r)| &\le C \sum_{L \in \mathcal{W}} \left[  \int_{L} \psi_r w \vert \nabla w \vert^2  +  \int_{L} \psi_r \vert \nabla w \vert^3 \right] \\
        & \le C \sum_{L \in \mathcal{W}} \left[ \| w \|_{L^{\infty}(L)}  \int_{L} \psi_r \vert \nabla w \vert^2  + \| \nabla w \|_{L^{\infty}(L)}  \int_{L} \psi_r \vert \nabla w \vert^2  \right] \\
        & \le C D(r)^{\kappa} \sum_{L \in \mathcal{W}}  \int_{L} \psi_r \vert \nabla w \vert^2  \le C D(r)^{1+\kappa} ,
    \end{split}
\end{align*}
where in the last but one passage we have used \cref{lemma:energyEstCubesHD}. The estimate now follows from \cref{corollary:boundaryEnergyControl}.
\end{proof}
Now we turn to the error $E^{o,2}$, i.e. the estimate \eqref{eqn:frequencyErrorsE^{o,2}Est}.
\begin{lemma}[Estimate of $E^{o,2}$]\label{lemma:frequencyErrorsE^{o,2}Est}
There exist constants $R=R(d, \delta_0)>0$, $\kappa=\kappa(d, \delta_0)>0$ and $C = C(d, \delta_0)>0$ with the following property. Suppose that $w$ is a solution of \eqref{eqn:thinObstacleRobinRegularity} under \cref{assumptions:wC1aAprioriEst} and \cref{assumptions:startingAssumptions}. Then,
\[
\big\vert E^{o, 2}(r) \big\vert \le C \left[H(r) D_i(r)^{2 \kappa} D_i'(r)\right]^{\sfrac12} \quad \text{for all } r \in (0, R).
\]
\end{lemma}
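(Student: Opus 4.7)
The plan is to reduce the cubic integrand in \eqref{eq:bulkouter2} to the quadratic integrand $|w||\nabla w|$ by pulling out one $L^\infty$ factor on each Whitney cube, and then to recognize the two resulting weighted $L^2$ integrals as $rH(r)$ and $rD_i'(r)$ respectively.

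First, I would start from the pointwise bound
\[
|E^{o,2}(r)|\le \frac{C}{r}\int_{B_r^+}|\varphi'|\!\left(\tfrac{|x|}{r}\right)\bigl(w^2|\nabla w|+|w||\nabla w|^2\bigr)
\]
provided by \eqref{eq:bulkouter2}. Decomposing the domain over the Whitney cubes $L\in\mathcal W$ of \cref{def:whitneyDecompDef}, and noting that the contact set $\Gamma$ contributes nothing by \eqref{eq:contactset}, I would apply the uniform $L^\infty$ estimates of \cref{lemma:energyEstCubesHD} on each cube intersecting $B_r^+$: on such an $L$ one has $w^2|\nabla w|\le \|w\|_{L^\infty(L)}|w||\nabla w|\le C D_i(r)^\kappa|w||\nabla w|$, and analogously $|w||\nabla w|^2\le C D_i(r)^\kappa|w||\nabla w|$. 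Summing and pulling the (uniform in $L$) constant out yields
\[
|E^{o,2}(r)|\le \frac{C D_i(r)^\kappa}{r}\int_{B_r^+}|\varphi'|\!\left(\tfrac{|x|}{r}\right)|w||\nabla w|.
\]

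Next I would apply the Cauchy--Schwarz inequality and identify the two resulting factors with $H$ and $D_i'$. Since $\mathrm{supp}\,\varphi'\subset[1-\upsilon,1]$ we have $|x|\sim r$ on $\mathrm{supp}\,\varphi'(|x|/r)$, so the definition \eqref{eqn:onePhaseFrequencyHeightDef} of $H$ together with the upper and lower bounds on $\mu$ gives $\int_{B_r^+}|\varphi'|(|x|/r)\,w^2\le C r H(r)$, while differentiating \eqref{e:definition-of-D-interior} as in \eqref{e:derivative-of-D-raw} and using the ellipticity of $M$ gives $\int_{B_r^+}|\varphi'|(|x|/r)\,|\nabla w|^2\le C r D_i'(r)$. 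Combining,
\[
|E^{o,2}(r)|\le \frac{C D_i(r)^\kappa}{r}\,(r H(r))^{1/2}(r D_i'(r))^{1/2}=C\bigl[H(r)\,D_i(r)^{2\kappa}\,D_i'(r)\bigr]^{1/2},
\]
which is the desired inequality.

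The only real subtlety is the bookkeeping around the $L^\infty$ control: the estimates of \cref{lemma:energyEstCubesHD} are indexed by the cube $L$, but the bound $CD_i(r)^\kappa$ is independent of $L$, so it factors out uniformly when one sums over the cubes of $\mathcal W$ meeting $\mathrm{supp}\,\varphi'(|x|/r)$. Apart from this, the argument is a direct elaboration of the proof of \cref{lemma:frequencyErrorsE^{o,1}Est}, with the pointwise bound $|\varphi|\le 1$ replaced by the sharper information coming from the fact that $\varphi'$ is supported in the annulus $\{(1-\upsilon)r\le|x|\le r\}$, which is exactly what allows the appearance of $(D_i'(r))^{1/2}$ rather than $D_i(r)^{1/2}$ on the right-hand side.
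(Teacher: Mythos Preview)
Your proof is correct and follows essentially the same approach as the paper: decompose over the Whitney cubes, use \cref{lemma:energyEstCubesHD} to extract a factor $D_i(r)^\kappa$, apply Cauchy--Schwarz to $\int|\varphi'|\,|w||\nabla w|$, and recognize the two factors as $H(r)$ and $D_i'(r)$ via the fact that $|x|\sim r$ on $\mathrm{supp}\,\varphi'(|x|/r)$. The only cosmetic difference is that the paper inserts the weights $1/|x|$ and $|x|$ explicitly in the Cauchy--Schwarz step (writing $\int|\varphi'|\tfrac{w^2}{|x|}\le CH(r)$ and $\int|\varphi'|\,|x||\nabla w|^2\le Cr^2 D_i'(r)$) rather than invoking $|x|\sim r$ afterwards, and passes through $D(r)^\kappa$ before converting back via \cref{corollary:boundaryEnergyControl}.
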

\begin{proof}
By \eqref{eq:bulkouter2} we have that
\begin{align*}   
& |E^{o, 2}(r)| \leq \frac{C}{r} \int_{ B_r^+ } \left|\varphi'\right|\left( \frac{\vert x \vert}{r} \right)\,\left(|w|^2\,|\nabla w|+|w|\,|\nabla w|^2 \right).
\end{align*}
Thanks to the Whitney decomposition and the property \eqref{eq:contactset} we have
\begin{align*}
    \begin{split}
        |E^{o, 2}(r)| &\le \frac{C}{r} \sum_{L \in \mathcal{W}} \left[  \int_{L}  \left|\varphi'\right|\left( \frac{\vert x \vert}{r} \right)\, w \vert \nabla w \vert^2  +  \int_{L}  \left|\varphi'\right|\left( \frac{\vert x \vert}{r} \right)\, w^2 \vert \nabla w \vert  \right] \\
        &\le \frac{C}{r} \sum_{L \in \mathcal{W}} \left[  \| \nabla w \|_{L^{\infty}(L)} +  \| w \|_{L^{\infty}(L)}\right] \int_{L}  \left|\varphi'\right|\left( \frac{\vert x \vert}{r} \right)\, w \vert \nabla w \vert   \\
        & \le \frac{C}{r} D(r)^{\kappa} \int_{B_r^+}  \left|\varphi'\right|\left( \frac{\vert x \vert}{r} \right)\, w \vert \nabla w \vert \\
 & \le \frac{C}{r} D(r)^{\kappa} \left(\int_{B_r^+}  \left|\varphi'\right|\left( \frac{\vert x \vert}{r} \right)\,\frac{w^2}{|x|} \right)^{\sfrac12}\left(\int_{B_r^+}  \left|\varphi'\right|\left( \frac{\vert x \vert}{r} \right)\,{|x|} \vert \nabla w \vert^2\right)^{\sfrac12} \\
 & \le \frac{C}{r} D(r)^{\kappa} \left[H(r)r^2D_i'(r)\right]^{\sfrac12} = C D(r)^{\kappa} \left[ H(r) D_i'(r)\right]^{\sfrac12},
\end{split}
\end{align*}
which, using \cref{corollary:boundaryEnergyControl}, concludes the proof.
\end{proof}

\subsubsection{Inner errors: estimates of $E^{i, k}(r)$}
Now we estimate the inner error terms $E^{i, k}$ in \cref{prop:errors}. We proceed with \eqref{eqn:frequencyErrorsE^{i,1}Est} and \eqref{eqn:frequencyErrorsE^{i,2}Est}.
\begin{lemma}[Estimate of $E^{i,1}$ and $E^{i,2}$]\label{lemma:frequencyErrorsE^{i,1}E^{i,2}Est}
There exist constants $R=R(d, \delta_0)>0$, $\kappa=\kappa(d, \delta_0)>0$ and $C = C(d, \delta_0)>0$ with the following property. Suppose that $w$ is a solution of \eqref{eqn:thinObstacleRobinRegularity} under \cref{assumptions:wC1aAprioriEst} and \cref{assumptions:startingAssumptions}. Then,
\begin{align*}
    &  \big\vert E^{i, 1}(r) \big\vert \le C r D_i(r) \quad \text{for all } r \in (0, R),\\
    & \big\vert E^{i, 2}(r) \big\vert \le C D_i(r)^{1+\kappa} \quad \text{for all } r \in (0, R).
\end{align*}
\end{lemma}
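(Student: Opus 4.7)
The plan is to bound each term appearing in \eqref{eq:onePhaseE{i,1}} and \eqref{eq:onePhaseE{i,2}} by combining three ingredients: (i) the weighted Poincar\'e-trace bounds of \cref{corollary:boundaryEnergyControl} to turn weighted $L^2$ norms of $w$ into $r^2 D_i(r)$; (ii) the Whitney decomposition from \cref{lem:whitney} together with the pointwise bounds $\|w\|_{L^\infty(L)}, \|\nabla w\|_{L^\infty(L)} \le C\, D_i(r)^\kappa$ from \cref{lemma:energyEstCubesHD} on each cube $L\in\mathcal W$ meeting $B_r^+$; (iii) Cauchy-Schwarz for the mixed terms. Throughout, $r$ will be taken smaller than the radius $R=R(d,\delta_0)$ furnished by \cref{lemma:energyEstCubesHD}.

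For $E^{i,1}$, start from \eqref{eq:onePhaseE{i,1}}. The term $\int_{B_r^+}\varphi(|x|/r)\,|\nabla w|^2$ is precisely $D_i(r)$, so it contributes $C r D_i(r)$. For the mixed term, Cauchy-Schwarz and point (i) of \cref{corollary:boundaryEnergyControl} give
\begin{equation*}
\int_{B_r^+}\varphi\!\left(\tfrac{|x|}{r}\right)|w|\,|\nabla w|
\le \Bigl(\int_{B_r^+}\varphi\!\left(\tfrac{|x|}{r}\right) w^2\Bigr)^{\!1/2}\!\Bigl(\int_{B_r^+}\varphi\!\left(\tfrac{|x|}{r}\right)|\nabla w|^2\Bigr)^{\!1/2}
\le C\, r\, D_i(r),
\end{equation*}
which, after multiplication by the prefactor $Cr$, is even bounded by $C r^2 D_i(r)\le CR\,r D_i(r)$. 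Combining the two estimates yields $|E^{i,1}(r)|\le C r D_i(r)$.

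For $E^{i,2}$, the idea is to extract one factor of $w$ or $\nabla w$ in $L^\infty$ on each Whitney cube and estimate the remaining $L^2$ norm globally. Using \cref{lem:whitney}(a) to write $B_r^+=\bigcup_{L\in\mathcal W}(L\cap B_r^+)$ (the set $\Gamma$ has measure zero by the non-degeneracy implicit in \cref{assumptions:startingAssumptions}) and \cref{lemma:energyEstCubesHD}, one estimates
\begin{align*}
\int_{B_r^+}\varphi\!\left(\tfrac{|x|}{r}\right)|\nabla w|^3
&\le \sum_{L\in\mathcal W}\|\nabla w\|_{L^\infty(L)}\int_{L}\varphi\!\left(\tfrac{|x|}{r}\right)|\nabla w|^2
\le C D_i(r)^\kappa\, D_i(r),\\
\int_{B_r^+}\varphi\!\left(\tfrac{|x|}{r}\right)|w|\,|\nabla w|^2
&\le \sum_{L\in\mathcal W}\|w\|_{L^\infty(L)}\int_{L}\varphi\!\left(\tfrac{|x|}{r}\right)|\nabla w|^2
\le C D_i(r)^\kappa\, D_i(r).
\end{align*}
The delicate term is $\int \varphi\, w^2|\nabla w|$: here we extract one power of $w$ in $L^\infty$ on each cube, and then bound what remains by the Cauchy-Schwarz argument used for $E^{i,1}$:
\begin{equation*}
\sum_{L\in\mathcal W}\int_{L}\varphi\!\left(\tfrac{|x|}{r}\right)w^2|\nabla w|
\le C D_i(r)^\kappa \int_{B_r^+}\varphi\!\left(\tfrac{|x|}{r}\right)|w|\,|\nabla w|
\le C\, r\, D_i(r)^{1+\kappa}\le CR\, D_i(r)^{1+\kappa}.
\end{equation*}
Summing the three contributions gives $|E^{i,2}(r)|\le C D_i(r)^{1+\kappa}$.

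The only subtlety worth flagging is the handling of the $w^2|\nabla w|$ term: a naive application of the $L^\infty$ bounds of \cref{lemma:energyEstCubesHD} to two factors at once would still produce a valid bound but with a worse $\kappa$, and in the frequency-monotonicity argument the precise power of $D_i$ matters; the chosen mixed strategy (one factor in $L^\infty$ per cube, the remainder absorbed via Cauchy-Schwarz and \cref{corollary:boundaryEnergyControl}) is what yields the sharp $D_i^{1+\kappa}$ exponent. Everything else is bookkeeping within the Whitney grid and the choice $r<R$ to absorb harmless factors of $r$ into the constant $C$.
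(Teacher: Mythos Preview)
Your argument is correct and follows the same overall strategy as the paper: Cauchy--Schwarz plus \cref{corollary:boundaryEnergyControl} for $E^{i,1}$, and the Whitney decomposition together with \cref{lemma:energyEstCubesHD} for $E^{i,2}$. Two small remarks. First, the reason one can restrict the sums to $L\in\mathcal W$ is not that $\Gamma$ has measure zero, but that by \cref{lem:whitney}(b) one has $w=|\nabla w|=0$ on $\Gamma$, so every integrand in \eqref{eq:onePhaseE{i,2}} vanishes there; this is what the paper invokes via \eqref{eq:contactset}. Second, for the term $\int\varphi\,w^2|\nabla w|$ the paper simply pulls out $\|\nabla w\|_{L^\infty(L)}$ and is left with $\sum_L\int_L\psi_r w^2=G(r)\le Cr^2 D_i(r)$ by \cref{corollary:boundaryEnergyControl}(i); this is marginally shorter than your route (extract $\|w\|_{L^\infty(L)}$, then Cauchy--Schwarz) and already gives the exponent $1+\kappa$, so the ``subtlety'' you flag is not actually needed.
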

\begin{proof}
Concerning the estimate on the error $E^{i, 1}(r)$, from \eqref{eq:onePhaseE{i,1}} and a Cauchy-Schwarz inequality we have
\begin{align*}
    E^{i, 1}(r) \le C r D(r) + C r \left( G(r) D(r) \right)^{\sfrac12} \le C r D(r) + C r D(r),
\end{align*}
where in the last passage we have used \cref{corollary:boundaryEnergyControl} (i). The estimate now follows using \cref{corollary:boundaryEnergyControl} (ii).

Let us turn to the error $E^{i, 2}(r)$. From \eqref{eq:onePhaseE{i,2}} and using the Whitney decomposition in cubes, taking also into account \eqref{eq:contactset} we can estimate
\begin{align*}
    \begin{split}
        |E^{i, 2}(r)| &\le C \sum_{L \in \mathcal{W}} \left[  \int_{L} \psi_r \vert \nabla w \vert^3  +  \int_{L}  \psi_r w \vert \nabla w \vert^2  +  \int_{L}  \psi_r w^2 \vert \nabla w \vert  \right] \\
        & \le C \sum_{L \in \mathcal{W}} \left[ \| \nabla w \|_{L^{\infty}(L)}  \int_{L}  \psi_r \vert \nabla w \vert^2  + \| w \|_{L^{\infty}(L)}  \int_{L}  \psi_r \vert \nabla w \vert^2  + \| \nabla w \|_{L^{\infty}(L)}  \int_{L}  \psi_r w^2   \right] \\
        & \le C D(r)^{\kappa} \sum_{L \in \mathcal{W}} \left[  \int_{L}  \psi_r \vert \nabla w \vert^2  +  \int_{L}  \psi_r w^2   \right] \\
        & \le C D(r)^k \left[ D(r) + G(r) \right],
    \end{split}
\end{align*}
where in the last but one passage we have used \cref{lemma:energyEstCubesHD}. The estimate for the error $E^{i, 2}(r)$ now follows from \cref{corollary:boundaryEnergyControl}.
\end{proof}
To conclude, we prove the estimate \eqref{eqn:frequencyErrorsE^{i,3}Est}, concerning the inner error $E^{i,3}$.
\begin{lemma}[Estimate of $E^{i,3}$]\label{lemma:frequencyErrorsE^{i,3}Est}
There exist constants $R=R(d, \delta_0)>0$, $\kappa=\kappa(d, \delta_0)>0$ and $C = C(d, \delta_0)>0$ with the following property. Suppose that $w$ is a solution of \eqref{eqn:thinObstacleRobinRegularity} under \cref{assumptions:wC1aAprioriEst} and \cref{assumptions:startingAssumptions}. Then,
\[
\left\vert E^{i, 3}(r) \right\vert \le C \, r \left[ D_i(r)^{\kappa} D_i'(r) + D(r)^{1+\kappa} \right] \quad \text{for all } r \in (0, R).
\]
\end{lemma}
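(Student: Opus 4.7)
The plan is to start from the raw bound \eqref{eq:onePhaseE{i,3}}, split the integrand into its three pieces $|\nabla w|^3$, $|w|\,|\nabla w|^2$ and $|w|^2\,|\nabla w|$, and show that the first two produce the $CrD_i(r)^\kappa D_i'(r)$ contribution while the third is responsible for $CrD(r)^{1+\kappa}$. The key tools I would combine are, on the one hand, the cube-by-cube uniform bound $\|w\|_{L^\infty(L)}+\|\nabla w\|_{L^\infty(L)}\le CD_i(r)^\kappa$ on every Whitney cube $L$ meeting $B_r^+$ (\cref{lemma:energyEstCubesHD}, applied with the decomposition of \cref{lem:whitney}), and on the other hand the elementary identity obtained by differentiating \eqref{e:definition-of-D-interior},
\[
r^2 D_i'(r)=\int_{B_r^+}|\varphi'|(|x|/r)\,|x|\,M(x)\nabla w\cdot\nabla w,
\]
which by ellipticity of $M$ gives $\int_{B_r^+}|\varphi'|(|x|/r)\,|x|\,|\nabla w|^2\le C r^2 D_i'(r)$.

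For the terms $|\nabla w|^3$ and $|w|\,|\nabla w|^2$, the plan is to sum over Whitney cubes $L\in\mathcal W$ intersecting $\supp|\varphi'|$, pull out one factor of $\|\nabla w\|_{L^\infty(L)}$ (respectively $\|w\|_{L^\infty(L)}$) on each cube and bound it uniformly by $CD_i(r)^\kappa$ via \cref{lemma:energyEstCubesHD}. The remaining integrand is $|\varphi'|(|x|/r)\,|x|\,|\nabla w|^2$, which by the identity above is at most $Cr^2 D_i'(r)$. Dividing by $r$ yields exactly $CrD_i(r)^\kappa D_i'(r)$, matching the first contribution on the right-hand side of \eqref{eqn:frequencyErrorsE^{i,3}Est}.

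The term $|w|^2|\nabla w|$ is the delicate one, and here the main obstacle arises: after extracting $\|\nabla w\|_{L^\infty(L)}\le CD_i(r)^\kappa$ via the Whitney decomposition there is no remaining $|\nabla w|^2$ to match against $D_i'(r)$, so one has to detour through the height function. The plan is to note that on $\supp|\varphi'|(|x|/r)$ one has $|x|\le r$, and that by the definition \eqref{eqn:onePhaseFrequencyHeightDef} together with the uniform lower bound on $\mu$ near the origin,
\[
\int_{B_r^+}|\varphi'|(|x|/r)\,w^2\le CrH(r).
\]
The frequency lower bound \cref{lemma:weightedPoincare}, $H(r)\le CrD_i(r)$, then converts this into $Cr^2D_i(r)^{1+\kappa}$; since $r\le R\le 1$ and $D_i\le CD$ on $(0,R)$ by \cref{corollary:boundaryEnergyControl}, the extra power of $r$ can be absorbed and $D_i^{1+\kappa}$ replaced by $D^{1+\kappa}$. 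This is precisely what produces the quadratic $CrD(r)^{1+\kappa}$ piece in \eqref{eqn:frequencyErrorsE^{i,3}Est}, which does not appear in the analogous estimates for $E^{o,k}$ or $E^{i,1},E^{i,2}$ handled by the $D_i^\kappa D_i'$ route alone.
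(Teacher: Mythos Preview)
Your proposal is correct and follows essentially the same approach as the paper: split the integrand into the three cubic pieces, use the Whitney decomposition together with \cref{lemma:energyEstCubesHD} to extract a factor $D_i(r)^\kappa$ cube-by-cube, control the remaining $|\varphi'|(|x|/r)|x||\nabla w|^2$ by $r^2D_i'(r)$, and for the $|w|^2|\nabla w|$ piece pass through $H(r)$ and then apply \cref{lemma:weightedPoincare}. The only point you leave implicit is that the integrand vanishes on the residual set $\Gamma$ by \eqref{eq:contactset}, which is what makes the cube decomposition exhaustive; the paper invokes this explicitly.
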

\begin{proof}
From \eqref{eq:onePhaseE{i,3}}, we have 
\begin{align*}
|E^{i, 3}(r)| & \le  \frac{C}{r} \int_{B_r^+} |\varphi'|\left( \frac{\vert x \vert}{r} \right) \vert x \vert \left(|\nabla w|^3+|w|\,|\nabla w|^2+|w|^2\,|\nabla w|\right)\,. 
\end{align*}
By the Whitney decomposition, \eqref{eq:contactset} and \cref{lemma:energyEstCubesHD} we can estimate
\begin{align*}
    \begin{split}
        |E^{i, 3}(r)| &\le \frac{C}{r} \sum_{L \in \mathcal{W}} \left[  \int_{L} |\varphi'|\left( \frac{\vert x \vert}{r} \right) \vert x \vert \vert \nabla w \vert^3  +  \int_{L}  |\varphi'|\left( \frac{\vert x \vert}{r} \right) \vert x \vert w \vert \nabla w \vert^2  +  \int_{L}  |\varphi'|\left( \frac{\vert x \vert}{r} \right) \vert x \vert w^2 \vert \nabla w \vert  \right] \\
        & \le \frac{C}{r} \sum_{L \in \mathcal{W}} \left[ \Big(\| \nabla w \|_{L^{\infty}(L)}   + \| w \|_{L^{\infty}(L)}\Big)  \int_{L}  |\varphi'|\left( \frac{\vert x \vert}{r} \right) \vert x \vert \vert \nabla w \vert^2  + \| \nabla w \|_{L^{\infty}(L)}  \int_{L}  |\varphi'|\left( \frac{\vert x \vert}{r} \right) \vert x \vert w^2   \right] \\
        & \le \frac{C}{r} D(r)^{\kappa} \sum_{L \in \mathcal{W}} \left[  \int_{L}  |\varphi'|\left( \frac{\vert x \vert}{r} \right) \vert x \vert \vert \nabla w \vert^2  +  \int_{L}  |\varphi'|\left( \frac{\vert x \vert}{r} \right) \vert x \vert w^2   \right] \\
        & \le \frac{C}{r} D(r)^k \left[ r^2 D_i'(r) + r^2 H(r) \right] \le C r D(r)^{\kappa} \left[ D_i'(r) + r D(r) \right],
    \end{split}
\end{align*}
where in the last inequality we have used \cref{lemma:weightedPoincare}. Now the estimate follows from \cref{corollary:boundaryEnergyControl}.
\end{proof}

\section{Frequency monotonicity and proof of \cref{thm:onePhaseAnalyticObstacle}}\label{section:FrequencyMonotonicity}
In this section we first show the (almost-)monotonicity of the Almgren-type frequency function \eqref{eqn:onePhaseFrequencyN(r)Def}. Then, we combine such result with a blow-up argument to conclude the proof of \cref{thm:onePhaseAnalyticObstacle}.

\subsection{Frequency (almost-)monotonicity}
We begin with the (almost-)monotonicity of the frequency function.
\begin{theorem}[Frequency monotonicity]\label{thm:freqmon}
   There exist constants $R=R(d, \delta_0)>0$, $\kappa=\kappa(d, \delta_0)>0$ and $C = C(d, \delta_0)>0$ with the following property. Suppose that $w$ is a solution of \eqref{eqn:thinObstacleRobinRegularity} under \cref{assumptions:wC1aAprioriEst} and \cref{assumptions:startingAssumptions}. Then,
   \begin{equation}\label{eqn:onePhaseFrequencyAlmostMonocotone}
        e^{\,g(r)} N(r) \text{ is non-decreasing for all } r \in (0, R),
    \end{equation}
    where the frequency function $N(r)$ is as in \eqref{eqn:onePhaseFrequencyN(r)Def} and the function $g(r): \R^+ \to \R$ is defined as
    \begin{equation}\label{eqn:onePhaseFrequencyg(r)Def}
    g(r) \coloneqq \frac{C}{\kappa} \left[ r^{\kappa} + D(r)^{\kappa} \right] - C \frac{1}{D_i(r)} \int_0^r H(\rho) \, d\rho + C \int_0^r \frac{H(\rho)}{D(\rho)} \, d\rho
    \end{equation}
    and satisfies
    \begin{equation}\label{eqn:onePhaseFrequencyg(r)Limit0}
    g(r) \to 0 \text{ as } r \to 0^+.
    \end{equation}
\end{theorem}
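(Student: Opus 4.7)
The plan is to prove $\tfrac{d}{dr}\log\bigl(e^{g(r)}N(r)\bigr)\ge 0$ by combining the three identities from \cref{lem:freqidentities} with the error bounds of \cref{prop:errors}. Taking logarithmic derivatives and using the inner-variation formula $rD'(r) = (d-2)D(r) + 2A(r)/r + e_I(r)$ together with the height formula $H'(r) = (d-1)H(r)/r + 2B(r)/r + e_H(r)$, one obtains
\[
\frac{N'(r)}{N(r)} = \frac{2A(r)}{r^2 D(r)} - \frac{2B(r)}{rH(r)} + \frac{e_I(r)}{rD(r)} - \frac{e_H(r)}{H(r)}.
\]
Applying the Cauchy--Schwarz inequality $B(r)^2 \le A(r)H(r)$ (with respect to the bilinear form induced by $M/\mu$) and substituting $B(r) = rD(r) - re_O(r)$ from the outer-variation identity \eqref{eqn:onePhaseOuterVariationD}, the main positive term becomes
\[
\frac{2A(r)}{r^2 D(r)} - \frac{2B(r)}{rH(r)} \ge -\frac{2e_O(r)\bigl(D(r)-e_O(r)\bigr)}{D(r)H(r)},
\]
so $N'(r)/N(r)$ is bounded below solely in terms of the three errors $e_O, e_I, e_H$.

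The next step is to bound each of these errors by $g'(r)$. For $|e_H|/H \le C$ (by \eqref{eqn:onePhaseHeightDerivativeErrorEst}) the contribution is absorbed by $Cr^{\kappa-1}$. For $|e_O|$, decomposed as $|e_O| \le CG + |E^{o,1}| + |E^{o,2}|$, we use $G \le Cr^2 D_i$ from \cref{corollary:boundaryEnergyControl} together with the weighted Poincar\'e $rD_i \ge cH$ of \cref{lemma:weightedPoincare}; the estimates \eqref{eqn:frequencyErrorsE^{o,1}Est} and \eqref{eqn:frequencyErrorsE^{o,2}Est} then produce contributions bounded by $Cr^{\kappa-1}$, $CD^{\kappa-1}D'$, and terms controlled by $H/D$ or $H/D_i$ via AM--GM. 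The inner error $|e_I|/(rD)$ is handled analogously using \eqref{eqn:frequencyErrorsE^{i,1}Est}--\eqref{eqn:frequencyErrorsE^{i,3}Est}. Reorganizing yields exactly $g'(r)$ for $g$ as in \eqref{eqn:onePhaseFrequencyg(r)Def}; the pair of integral terms $-H/D_i$ and $H/D$ compensates the contribution from the boundary energy $D_b = D - D_i$ generated by the Robin-type term in $\mathcal{F}$.

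To conclude, the vanishing $g(r) \to 0$ as $r \to 0^+$ follows from three facts: (a) $D(r), r \to 0$ give $r^\kappa + D(r)^\kappa \to 0$; (b) the weighted Poincar\'e inequality yields $H(\rho) \le C\rho D_i(\rho) \le C\rho D_i(r)$ for $\rho \le r$, hence $D_i(r)^{-1}\int_0^r H \le Cr^2 \to 0$; (c) $H/D \le H/D_i \le C\rho$, so $\int_0^r H(\rho)/D(\rho)\, d\rho \le Cr^2/2 \to 0$. Combining gives $g(r) \to 0$ and finishes the proof via integration.

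The hard part will be the bookkeeping required to match each error estimate to a term in $g'(r)$, in particular ensuring that the bounds involving $D_i'$ (arising in $|E^{o,2}|$ and $|E^{i,3}|$) are absorbed against $CD^{\kappa-1}D'$ after a Cauchy--Schwarz step. The improvement factor $D(r)^\kappa$ coming from the $L^\infty$-$L^2$ estimates on Whitney cubes in \cref{lemma:energyEstCubesHD} is the essential ingredient: it is precisely what ensures that the integrated corrections decay at the branching point, and this is the novelty relative to the interior quasilinear setting where no such Robin-type boundary terms appear.
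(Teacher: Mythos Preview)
Your overall strategy---computing $N'/N$ via the three identities and then bounding each error by a summand of $-g'(r)$---is correct and matches the paper. Your handling of $e_I$, $e_H$, and of the limit $g(r)\to 0$ is fine.

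The gap is in your treatment of the main term $\tfrac{2A}{r^2D}-\tfrac{2B}{rH}$. Your lower bound
\[
\frac{2A}{r^2D}-\frac{2B}{rH}\;\ge\;-\frac{2e_O(D-e_O)}{DH}
\]
is algebraically correct, but the right-hand side cannot be controlled by $g'(r)$. Since $(D-e_O)/D\approx 1$ for small $r$, the $G$ and $E^{o,1}$ components of $e_O$ leave a term of size $(G+|E^{o,1}|)/H$. You assert this is ``controlled by $H/D$ or $H/D_i$'', but there is no a priori lower bound on $H$ in terms of $D$: the weighted Poincar\'e inequality $rD_i\ge cH$ goes the wrong way. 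Neither $G\le Cr^2D_i$ nor $|E^{o,1}|\le CD_i^{1+\kappa}$ helps, because both would then require an upper bound on $D_i/H$, which is unavailable before monotonicity is proved.

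The paper circumvents this with a sharper algebraic splitting. Setting $F(r):=\tfrac{1}{r}B(r)-\tfrac12 E^{o,2}(r)$ (so that $|D-F|\le C\,G+|E^{o,1}|$ does \emph{not} involve $E^{o,2}$), one rewrites
\[
\frac{2A}{r^2D}-\frac{2B}{rH}=\frac{2(AH-B^2)}{r^2FH}+\frac{B\,E^{o,2}}{rFH}-\frac{2A(D-F)}{r^2FD}.
\]
The first term is nonnegative by Cauchy--Schwarz; the second is handled exactly as you propose, via $|B|\le C(r^2HD_i')^{1/2}$. The crucial difference is the third term: here $(G+E^{o,1})$ appears multiplied by $A$, not by $B\approx rD$, and the elementary bound $|A|\le Cr^2D_i'$ (immediate from the definitions of $A$ and $D_i'$) yields
\[
\frac{|A|\,(G+|E^{o,1}|)}{r^2FD}\;\le\;C\,D_i'\Bigl(\frac{1}{D_i^{2}}\int_0^r H\,d\rho + D_i^{\kappa-1}\Bigr),
\]
which is precisely what integrates to the two nonstandard terms in $g$. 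The factor $D_i'$ supplied by $A$---absent in your bound, where $B$ plays that role---is what makes this error integrable.
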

\begin{remark}\label{remark:freqMonotoneIndependent}
    Notice that neither constants $R$, $\kappa$ nor $C$ in \cref{thm:freqmon} depend on the parameter $\upsilon$ in \eqref{eqn:onePhaseCutoffDef}. In particular, the property \eqref{eqn:onePhaseFrequencyAlmostMonocotone} holds true independently of $\upsilon \in (\sfrac12, 1)$.
\end{remark}
\begin{proof}[Proof of \cref{thm:freqmon}]
To begin with, from \cref{remark:wNonDegeneracy} we have that
\begin{equation}\label{eqn:gradientwNotVanishr}
\int_{B_r^+} w^2 > 0 \quad \text{and} \quad \int_{B_r^+} \vert \nabla w \vert^2 > 0 \quad \text{for all } r \in (0, 1),
\end{equation}
while from \cref{lem:freqidentities} we have
\[
H(r) > 0 \quad \text{for all } r \in (0, 1).
\]
Moreover, combining \eqref{eqn:gradientwNotVanishr} with \cref{corollary:boundaryEnergyControl} we have that there exist a constant $R=R(d, \delta_0)$ small enough so that
\[
D(r)>0 \text{ for all } r \in (0, R).
\]
Then, we compute
\begin{align*}
 \frac{d}{dr} \ln{N(r)} &= \frac{1}{r} + \frac{D'(r)}{D(r)} - \frac{H'(r)}{H(r)}\\
 &= \frac{1}{r} + \frac{\frac1r(d-2) D(r)+\frac{2}{r^2} A(r)+\frac1r e_I(r)}{D(r)} - \frac{\frac{d-1}{r} H(r) + \frac{2}{r} B(r) + e_{H}(r)}{H(r)}\\
  &= \frac{2A}{r^2D}+\frac1r\frac{e_I}{D} -\frac{2B}{rH}-\frac{e_H}{H}\\
    &= \frac{2}{r^2}\left(\frac{A}{D}-\frac{rB}{H}\right)+\frac1r\frac{e_I}{D}-\frac{e_H}{H}
\end{align*}
Let us denote with $F(r)$ the quantity
\[
F(r) \coloneqq \frac{1}{r} B(r) - \frac{1}{2} E^{o, 2}(r).
\]
Thanks to \eqref{eqn:onePhaseOuterVariationD}, \eqref{eqn:frequencyErrorsE^{o,1}Est} and \cref{corollary:boundaryEnergyControl} (i) there exist constants $\kappa = \kappa(d, \delta_0)$, $C_0 = C_0(d, \delta_0)$ and $R=R(d, \delta_0)$ such that
\[
\left\vert F(r) - D(r) \right\vert= |G(r)+E^{o,1}(r)| \le C \left[r^2 D(r) + D(r)^{1+\kappa} \right] \quad \text{for all } r \in (0, R),
\]
which implies
\begin{equation}\label{eqn:frequenciMonotoneF(r)D(r)Equiv}
    (1 - c r) D(r) \le F(r) \le (1 + c r) D(r) \quad \text{for all } r \in (0, R)
\end{equation}
and some constant $c=c(d, \delta_0)$.

Then, we can write
\begin{align*}    
& \frac{2}{r^2}\left(\frac{A}{D}-\frac{rB}{H}\right)+\frac1r\frac{e_I}{D}-\frac{e_H}{H} = \\
&= \frac{2}{r^2}\left(\frac{A}{F}-\frac{rB}{H}\right)-\frac{2}{r^2}\left(\frac{A}{F}-\frac{A}{D}\right)+\frac1r\frac{e_I}{D}-\frac{e_H}{H} = \\
&= \frac{2}{r^2 FH}\left(AH-rBF\right)-\frac{2A}{r^2}\left(\frac{D-F}{FD}\right)+\frac1r\frac{e_I}{D}-\frac{e_H}{H}\\
&= \frac{2}{r^2 FH}\left(AH-B^2\right) + \frac{2B}{r^2 FH}\left(B-rF\right)-\frac{2A}{r^2}\left(\frac{D-F}{FD}\right)+\frac{e_I}{rD}-\frac{e_H}{H}\\
&= \frac{2}{r^2 FH}\left(AH-B^2\right)+ \frac{B E^{o,2}}{r FH}-\frac{2A}{r^2FD}\left(G + E^{o,1}\right)+\frac{e_I}{rD}-\frac{e_H}{H}.
\end{align*}
Finally, since by the Cauchy-Schwarz inequality
$AH\ge B^2$, we get that
\begin{align}\label{eqn:frequencyMonotoneOnePhaseEst1}
\begin{split}
 \frac{d}{dr} \ln{N(r)}
 &\ge \frac{B(r) E^{o,2}(r)}{r F(r)H(r)}-\frac{2A(r)}{r^2F(r)D(r)}\left(G(r) + E^{o,1}(r)\right)+\frac{e_I(r)}{wrD(r)}-\frac{e_H(r)}{H(r)}.
\end{split}
\end{align}
Now we estimate separately all the terms at the right hand side:
\begin{itemize}
    \item to begin with, we show that
    \begin{equation}\label{eqn:frequencyMonotoneOnePhaseErrorE^{o,2}}
        \left\vert \frac{B(r) E^{o,2}(r)}{r F(r)H(r)} \right\vert \le C D_i(r)^{\kappa-1} D_i'(r) \quad \text{for all } r\in(0, R)
    \end{equation}
    and some constants $C=C(d, \delta_0)>0$, $R=R(d, \delta_0)>0$, $\kappa=\kappa(d, \delta_0)$. Indeed, from \eqref{e:definition-of-B-levico}, \eqref{eqn:onePhaseFrequencyHeightDef}, \eqref{e:definition-of-D-interior} and a Cauchy-Schwarz inequality we have
    \begin{equation}\label{eqn:frequenciMonotoneB(r)Est}
    \vert B(r) \vert \le C \left( r^2 H(r) D_i'(r) \right)^{\sfrac12} \quad \text{for all } r\in(0, R)
    \end{equation}
    and some constants $C=C(d, \delta_0)>0$, $R=R(d, \delta_0)>0$, $\kappa=\kappa(d, \delta_0)$. Combining the estimates \eqref{eqn:frequenciMonotoneB(r)Est} and \eqref{eqn:frequenciMonotoneF(r)D(r)Equiv} with the error estimate  \eqref{eqn:frequencyErrorsE^{o,2}Est} from \cref{prop:errors} gives \eqref{eqn:frequencyMonotoneOnePhaseErrorE^{o,2}};

    \item now we prove that 
    \begin{equation}\label{eqn:frequencyMonotoneOnePhaseErrorsE^{o,1}G}
        \left\vert \frac{2A(r)}{r^2F(r)D(r)}\left(G(r) + E^{o,1}(r)\right) \right\vert \le C\, D_i'(r)\left(\frac{1}{D_i(r)^2}\int_0^r H(\rho) \, d\rho  +  D_i(r)^{\kappa-1}\right) \quad \text{for all } r\in(0, R)
    \end{equation}
    and some constants $C=C(d, \delta_0)>0$, $R=R(d, \delta_0)>0$, $\kappa=\kappa(d, \delta_0)$. 
    
    By \eqref{e:definition-of-A-levico} we have that
    \begin{equation}\label{eqn:onePhaseFrequencyA(r)D'(r)Est}
    \vert A(r) \vert \le C\, D_i'(r)\left[\frac{1}{D_i(r)^2}\int_0^r H(\rho) \, d\rho  +  D_i(r)^{\kappa-1}\right] \quad \text{for all } r\in(0, R)
    \end{equation}
    Combining \eqref{eqn:onePhaseFrequencyA(r)D'(r)Est} with the error estimate \eqref{eqn:frequencyErrorsE^{o,1}Est} form \cref{prop:errors} and the estimate on $G(r)$ from \cref{lemma:height-function-integral-inequality}, we get 
    \begin{align*}
    \left\vert \frac{2A(r)}{r^2F(r)D(r)}\left(G(r) + E^{o,1}(r)\right) \right\vert &\le\frac{2|A(r)|}{r^2F(r)D(r)}\left(C \int_0^r H(\rho) \, d\rho + C D_i(r)^{1+\kappa}\right)\\
    &\le C\frac{ D_i'(r)}{F(r)D(r)}\left( \int_0^r H(\rho) \, d\rho +  D_i(r)^{1+\kappa}\right)\\
        &\le C\, D_i'(r)\left(\frac{1}{D_i(r)^2}\int_0^r H(\rho) \, d\rho  +  D_i(r)^{\kappa-1}\right)
    \end{align*}
    and some constants $C=C(d, \delta_0)>0$, $R=R(d, \delta_0)>0$, $\kappa=\kappa(d, \delta_0)$, where in the last inequality we have used \eqref{eqn:frequenciMonotoneF(r)D(r)Equiv} and \cref{corollary:boundaryEnergyControl} (ii). 

    \item let us show that
    \begin{equation}\label{eqn:frequencyMonotoneOnePhaseErrorse_I}
        \left\vert \frac{e_I}{rD} \right\vert \le C \left[ r^{\kappa - 1} + D_i(r)^{\kappa-1} D_i'(r) \right] \quad \text{for all } r\in(0, R)
    \end{equation}
    and some constants $C=C(d, \delta_0)>0$, $R=R(d, \delta_0)>0$, $\kappa=\kappa(d, \delta_0)$.
    
    First, combining the trace inequality from  \cref{lemma:weighted-trace-inequality} with the Poincaré inequality from \cref{corollary:boundaryEnergyControl} (i) we have that
    \[
    \int_{B_r'} \varphi\left(\frac{|x|}r\right)\,w^2 \le C \left[ r D_i(r) + \frac{1}{r} G(r) \right] \le C r D_i(r),  \quad \text{for all } r\in(0, R)
    \]
    and some constants $C=C(d, \delta_0)>0$, $R=R(d, \delta_0)>0$. Combining the estimate \eqref{eq:innervar-error-levico} for $e_I$ with the above inequality and the error estimates \eqref{eqn:frequencyErrorsE^{i,1}Est}, \eqref{eqn:frequencyErrorsE^{i,2}Est}, \eqref{eqn:frequencyErrorsE^{i,3}Est}
    , we get 
    \begin{align*}
    |e_I(r)|&\le C\, \int_{B_r'} \varphi\left(\frac{|x|}r\right)\,w^2
            +\sum_{k=1}^3 |E^{i, k}(r)|\\
            &\le C r D_i(r)+C D_i(r)^{1+\kappa}+C \, r \left[ D_i(r)^{\kappa} D_i'(r) + D(r)^{1+\kappa} \right].
    \end{align*}
  Since $D_i(r)$ is controlled by a power of $r$ thanks to \cref{assumptions:wC1aAprioriEst} and \cref{assumptions:startingAssumptions}, up to changing $\kappa$ and $C$, we get
  \begin{align*}
  |e_I(r)| & \le C r^{\kappa} D_i(r) + C \, r D_i(r)^{\kappa} D_i'(r),  \quad \text{for all } r\in(0, R),
  \end{align*}
  with constants $R=R(d, \delta_0)>0$, $C=C(d, \delta_0)>0$ and $\kappa=\kappa(d, \delta_0)$.

    \item to conclude, we notice that by \eqref{eqn:onePhaseHeightDerivativeErrorEst}
    \begin{equation}\label{eqn:frequencyMonotoneOnePhaseErrorse_H}
        \left\vert \frac{e_H}{H} \right\vert \le C \quad \text{for all } r\in(0, R)
    \end{equation}
    and some constant $C=C(d, \delta_0)>0$, $R=R(d, \delta_0)>0$. 
    
\end{itemize}
Combining \eqref{eqn:frequencyMonotoneOnePhaseEst1}, \eqref{eqn:frequencyMonotoneOnePhaseErrorE^{o,2}}, \eqref{eqn:frequencyMonotoneOnePhaseErrorsE^{o,1}G}, \eqref{eqn:frequencyMonotoneOnePhaseErrorse_I} and \eqref{eqn:frequencyMonotoneOnePhaseErrorse_H}, for all $r\in(0, R)$ and some constants $C=C(d, \delta_0)>0$, $R=R(d, \delta_0)>0$, $\kappa=\kappa(d, \delta_0)$ we have the estimate 
\begin{align*}
 \frac{d}{dr} \ln{N(r)}
 &\ge \frac{B(r) E^{o,2}(r)}{r F(r)H(r)}-\frac{2A(r)}{r^2F(r)D(r)}\left(G(r) + E^{o,1}(r)\right)+\frac{e_I(r)}{wrD(r)}-\frac{e_H(r)}{H(r)}\\
 &\ge -C D_i(r)^{\kappa-1} D_i'(r)-\frac{C\, D_i'(r)}{D_i(r)^2}\int_0^r H(\rho) \, d\rho -Cr^{\kappa - 1}=-g'(r), 
\end{align*}
where $g(r)$ is the function defined in \eqref{eqn:onePhaseFrequencyg(r)Def}.
In particular,
\[
\frac{d}{dr} e^{\,g(r)} N(r) \ge 0 \quad \text{for all } r\in(0, R)
\]
and a constant $R=R(d, \delta_0)>0$. We are only left to show that 
\[
\lim_{r\to0}g(r)=0.
\]
To this aim, we recall that the following properties hold:
\begin{itemize}
    \item[(i)] from \cref{assumptions:wC1aAprioriEst} and \cref{assumptions:startingAssumptions} we have
    \[
    \lim_{r\to0} D(r)=0,
    \]

    \item[(ii)] from \cref{lemma:weightedPoincare} and the monotonicity of $D_i(r)$ we have
    \[
    \frac{1}{D_i(r)} \int_0^r H(\rho) \, d\rho  \le C \frac{1}{D_i(r)} \int_0^r \rho D_i(\rho) \, d\rho \le Cr^2,
    \]

  \item[(iii)] using again \cref{lemma:weightedPoincare}, we have that 
  \begin{align*}
  \int_0^r \frac{H(\rho)}{D(\rho)} \, d\rho \le \int_0^r C\rho \, d\rho\le Cr^2, 
  \end{align*}
\end{itemize}
which imply \eqref{eqn:onePhaseFrequencyg(r)Limit0}. This concludes the proof.
\end{proof}

\subsection{Blow-up and proof of \cref{thm:onePhaseAnalyticObstacle}}
If $w \equiv 0$ in $B_1^+$, then thanks to \cref{remark:wNonDegeneracy} we have the degenerate case in \cref{thm:onePhaseAnalyticObstacle}, namely
\[
\partial \Omega_u \cap B_1 \equiv \mathrm{graph}(\phi) \cap B_1.
\]
Hence, in the following we can assume that $w \not\equiv 0$ in $B_1^+$, therefore we can work under \cref{assumptions:wC1aAprioriEst} and \cref{assumptions:startingAssumptions}.

Once the (almost-)monotonicity of the Almgren-type frequency function \eqref{eqn:onePhaseFrequencyN(r)Def} has been established in \cref{thm:freqmon}, we can proceed to prove the second case in \cref{thm:onePhaseAnalyticObstacle} via a blow-up analysis.

To begin with, we denote
\begin{align*}
\begin{split}
& \widetilde H(r) \coloneqq \lim_{\upsilon \to 1^-} H(r) = \int_{\partial B_r^+} \mu w^2, \\
& \widetilde D(r) \coloneqq \lim_{\upsilon \to 1^-} D(r) = \int_{B_r^+} M \nabla w \cdot \nabla w - \frac{1}{2} \int_{ B_r' } \partial_d Q(x) w^2, \\
& \widetilde N(r) \coloneqq \lim_{\upsilon \to 1^-} N(r) = \frac{r \widetilde D(r)}{\widetilde H(r)},
\end{split}
\end{align*}
where the height function $H(r)$ and the energy function $D(r)$ were defined in \eqref{eqn:onePhaseFrequencyHeightDef} and \eqref{eqn:onePhaseEnergyDef} respectively, $\upsilon$ being the parameter from \eqref{eqn:onePhaseCutoffDef}. Moreover, we introduce the limit function $\widetilde g(r)$  defined by
\[
\widetilde g(r) \coloneqq \lim_{\upsilon \to 1^-} g(r).
\]
By \cref{remark:freqMonotoneIndependent}, the frequency function $\widetilde N(r)$ satisfies
\begin{equation}\label{eqn:onePhaseFrequencyAlmostMonocotoneBlowUp}
e^{\widetilde g(r)} \widetilde N(r) \text{ is non-decreasing for all } r \in (0, R),
\end{equation}
where the constants $C, R, \kappa > 0$ are as in \cref{thm:freqmon}.

Let us define the Almgren rescalings of the solution $w$ as
\[
w_n(x) \coloneqq \frac{w(r_n x)}{\widetilde H(r_n)^{\sfrac12}} \quad \text{in } B_1,
\]
for some sequence of radii $r_n \to 0$.

Then, from \cref{corollary:boundaryEnergyControl} and a trace argument the sequence $w_n$ is bounded in $H^1(B_1^+)$ and, by \cref{prop:nonlinearThinObstacleC1aRegularity} also in $C^{1, \alpha}\left( B_{\sfrac12}^+ \right)$, uniformly in $n \in \N$. In particular, up to a subsequence
\[
w_n \to w_{\infty} \text{ weakly in } H^1(B_1) \text{ and in } C^{1, \alpha}\left( B_{\sfrac12}^+ \right)
\]
for a function $w_{\infty}: \R_+^d \to \R$. Taking sequences of the form $\rho r_n$, for all radii $\rho \in (0, 1)$, we see that the limit function satisfies:
\begin{itemize}
    \item $w_{\infty}$ is a nontrivial solution to the harmonic thin obstacle problem in $B_{\sfrac12}^+$. This follows from \cref{assumptions:startingAssumptions} and the $C^{1, \alpha}$ convergence.

    \item $w_{\infty}$ is positively $l-$homogeneous in $B_{\sfrac12}^+$, for some $l \in \R_{\ge 3/2}$. This follows in a standard way from \eqref{eqn:onePhaseFrequencyAlmostMonocotoneBlowUp}, since
    \[
    \frac{d}{dr}\left( \frac{r \int_{B_r^+} \vert \nabla w_{\infty} \vert^2}{\int_{\partial B_r^+} w_{\infty}^2} \right) = 0 \quad \text{ for all } r \in (0, +\infty).
    \]
\end{itemize}
Following the transformations from \cref{section:onePhaseFrequencyChangeOfCoordinates}, the set $\mathcal S_{1}(u)$ can be characterized, in terms of the function $w$, as
\[
\Sigma_w \coloneqq \{ w = 0 \} \cap \left\{ \frac{ M(x) \nabla w(x) \cdot e_d}{1+ \partial_d w(x)} - \frac{M(x) \nabla w(x) \cdot \nabla w(x)}{2 \left( 1 + \partial_d w (x) \right)^2} + \frac{1}{2} Q(x',0) = 0 \right\} \cap B_1'.
\]
Hence, let $\Sigma_n$ the singular sets for the blow-up sequence $w_n$
\[
\Sigma_n \coloneqq \{ w_n = 0 \} \cap \left\{ \frac{ M(r_n x) \nabla w_n(x) \cdot e_d}{1+ \partial_d w(r_n x)} - \frac{M(r_n x) \nabla w(r_n x) \cdot \nabla w_n(x)}{2 \left( 1 + \partial_d w (r_n x) \right)^2} + \frac{1}{2} Q(r_n x',0) = 0 \right\} \cap B_1'.
\]
and $\Sigma_{\infty}$ the singular set for the limit $w_{\infty}$
\[
\Sigma_{\infty} \coloneqq \{ w_{\infty} = 0 \} \cap \left\{\nabla w_{\infty} = 0 \right\} \cap B_1'.
\]
Now, without loss of generality, we can suppose by contradiction that the origin is a point of positive $\mathcal{H}_{\infty}^{d-2+\varepsilon}-$density for the singular set $\Sigma_w$ (see e.g. \cite[Proposition 11.3]{Giusti1984:MinimalSurfacesandBVFunctions}). From the $C^{1, \alpha}$ convergence of $w_n$ to $w_{\infty}$, we see that
\[
\mathcal{H}_{\infty}^{d-2+\varepsilon}\left( \Sigma_{\infty} \cap \overline{B_{1/4}} \right) \ge \lim \sup_{n \to +\infty} \mathcal{H}_{\infty}^{d-2+\varepsilon}\left( \Sigma_n \cap \overline{B_{1/4}} \right)
\]
Moreover, from the positive density assumption we have that
\[
\lim \sup_{n \to +\infty} \mathcal{H}_{\infty}^{d-2+\varepsilon} \left( \Sigma_n \cap \overline{B_{1/4}} \right) > 0,
\]
so that
\[
\mathcal{H}_{\infty}^{d-2+\varepsilon}\left( \Sigma_{\infty} \cap \overline{B_{1/4}} \right) > 0,
\]
which implies
\begin{equation}\label{eqn:onePhaseSingularSetContradiction}
\mathcal{H}^{d-2+\varepsilon}\left( \Sigma_{\infty} \cap B_{\sfrac12} \right) > 0.
\end{equation}
We can conclude the proof of \cref{thm:onePhaseAnalyticObstacle} combining \eqref{eqn:onePhaseSingularSetContradiction} with the following
\begin{lemma}\label{lem:dimensionreduction}
    Let the function $u:B_1^+ \to \R$ be a solution to the harmonic thin obstacle problem in $B_1^+ \subset \R^d$ and suppose that $u$ is positively $l-$homogeneous, for some $l \in \R_{\ge 3/2}$. Then, denoting with
    \[
       \Sigma_u \coloneqq \{ u = 0 \} \cap \left\{\nabla u = 0 \right\} \cap B_1',
    \]
    the following properties hold:
    \begin{enumerate}
        \item[(i)] if $d = 2$, then $\Sigma_u \cap B_1' = \{ 0 \}$, 

        \item[(ii)] if $d \ge 3$, then $\dim_{\mathcal H}(\Sigma_u )\leq d-2$.
    \end{enumerate}
\end{lemma}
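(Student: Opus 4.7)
The plan is to combine a translation-invariance property of Almgren blow-ups, forced by the positive homogeneity of $u$, with Federer's classical dimension-reduction principle. First I would observe that since $u$ is positively $l$-homogeneous, for any $x_0\in\Sigma_u\setminus\{0\}$ the entire ray $\{tx_0:t>0\}$ lies in $\Sigma_u$: indeed $u(tx_0)=t^l u(x_0)=0$ and $\nabla u(tx_0)=t^{l-1}\nabla u(x_0)=0$. Next I would introduce the Almgren rescalings at $x_0$,
\[
v_r(y):=\frac{u(x_0+ry)}{\bigl(\fint_{\partial B_1^+}u(x_0+r\,\cdot\,)^2\bigr)^{\sfrac12}},
\]
and use the classical frequency monotonicity for the harmonic Signorini problem (\cite{AthanasopoulosCaffarelli:SignoriniPb}) to extract a subsequential $C^{1,\alpha}_{\rm loc}$ limit $v:\R^d_+\to\R$, which is a nontrivial, positively $l'$-homogeneous solution of the Signorini problem with $l'\ge 3/2$. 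The key manipulation is the identity
\[
u\bigl(x_0+r(y+s\nu)\bigr)=(1+rs/|x_0|)^{l}\,u(x_0+r'y),\qquad \nu:=x_0/|x_0|,\quad r':=r/(1+rs/|x_0|),
\]
coming from the homogeneity of $u$; letting $r\to 0$ so that $r'/r\to 1$ and $(1+rs/|x_0|)^l\to 1$, together with the fact that the Almgren normalization varies by an asymptotically trivial factor under $r\mapsto r'$, yields the translation invariance $v(y+s\nu)=v(y)$ for every $s\in\R$.

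For the base case $(i)$, I would argue by contradiction: if $x_0=(a,0)\in\Sigma_u\cap B_1'$ with $a\ne 0$ in dimension $d=2$, the blow-up $v$ constructed above is defined on $\R^2_+$ and invariant along $e_1$, hence $v(y)=\bar v(y_2)$. Harmonicity in $\{y_2>0\}$ forces $\bar v$ to be affine, and positive $l'$-homogeneity with $l'\ge 3/2$ then forces $\bar v\equiv 0$, contradicting the Almgren normalization $v\not\equiv 0$. Therefore $\Sigma_u\cap B_1'=\{0\}$ in dimension two.

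For $d\ge 3$ in part $(ii)$, I would apply Federer's dimension-reduction principle to the class $\mathcal F_d$ of nontrivial, positively homogeneous (of degree $\ge 3/2$) solutions to the harmonic Signorini problem on $\R^d_+$. The first part of the argument shows that $\mathcal F_d$ is closed under Almgren blow-ups at singular points, and that such blow-ups gain one additional translation-invariant direction when the base point is nonzero; part $(i)$ provides the base case. Standard Federer iteration (splitting off one translation-invariant direction at a time) then yields $\mathcal H^s(\Sigma_u)=0$ for every $s>d-2$, i.e.\ $\dim_{\mathcal H}(\Sigma_u)\le d-2$. The main subtle point will be justifying the passage to the limit in the translation-invariance identity of the first step, which rests on the asymptotic power-law behavior of the Almgren normalization $r\mapsto\|u(x_0+r\,\cdot\,)\|_{L^2(\partial B_1^+)}$ at a Signorini free boundary point; this is classical and follows from the frequency monotonicity.
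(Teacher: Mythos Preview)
Your proposal is correct and follows essentially the same route as the paper: an Almgren blow-up at a nonzero point of $\Sigma_u$, combined with the positive homogeneity of $u$, produces a limit with one extra direction of translation invariance, and Federer's dimension-reduction iteration then yields~(ii). Your homogeneity identity $u(x_0+r(y+s\nu))=(1+rs/|x_0|)^{l}u(x_0+r'y)$ is exactly the mechanism the paper uses (written there via an auxiliary scaling factor $\beta_n$), and the ``subtle point'' you flag about the asymptotic behavior of the normalization is handled in the paper simply through the $C^{1,\alpha}$ convergence of the rescalings.

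The one genuine difference is the base case~(i): the paper proves it by explicitly listing all positively homogeneous solutions of the 2D Signorini problem, whereas you obtain it by running the same blow-up argument in $d=2$ and observing that a translation-invariant homogeneous solution of degree $\ge 3/2$ depending only on $y_2$ must vanish. Both are valid; your route is self-contained and avoids the classification, while the paper's is more elementary but relies on knowing the 2D catalogue.
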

\begin{proof}
We treat the two cases in order.
\begin{itemize}
    \item[(i)] A direct computation shows that the functions
    \begin{align*}
        & r^{k} cos(k \theta) \quad \text{for } k = 2n, \quad n \in \N_{\ge 1}, \\
        - & r^{k} sin(k \theta) \quad \text{for } k = 2n-1 \text{ and } k = 2n-\sfrac12, \quad n \in \N_{\ge 1},
    \end{align*}
    are the only admissible positively homogeneous solutions to the harmonic thin obstacle problem in $B_1^+$, which immediately implies case (i).

    \item[(ii)] We proceed by contradiction, via a standard dimension reduction argument. 
    
    Suppose that for some constant $\varepsilon > 0$
    \[
    \mathcal{H}^{d-2+\varepsilon}\left( \Sigma_u \right) > 0.
    \]
    In particular, we can find a point $x_0 \neq 0 \in B_1'$ with positive $\mathcal{H}_{\infty}^{d-2+\varepsilon}-$density. Since the function $u$ is a solution to the harmonic thin obstacle problem in $B_1^+$, the Almgren monotonicity formula
    \[
    N(r) \coloneqq \frac{r \int_{B_r(x_0)} \vert \nabla u \vert^2}{\int_{\partial B_r(x_0)} u^2}
    \]
    is monotone non-decreasing for all $r \in (0, r_0)$, where $r_0>0$ is a positive constant. In particular, up to a subsequence, the rescalings
    \[
    u_n \coloneqq \frac{u(x_0 + r_n x)}{\int_{\partial B_{r_n}(x_0)} u^2}
    \]
    converge weakly in $H^1( B_1^+)$ and strongly in $C^{1, \alpha}(B_r^+)$, for some $0< \alpha < 1$ and all $0< r < 1$, to a nontrivial function $u_{\infty}:B_1^+ \to \R$ with the following properties:
    \begin{itemize}
        \item[1.] $u_{\infty}$ is a solution to the harmonic thin obstacle problem in $B_1^+$;

        \item[2.] $u_{\infty}$ is positively $l-$homogeneous, for some $l \in \R_{\ge 3/2}$;

        \item[3.] denoting with $\Sigma_{u_{\infty}}$ the set
        \[
         \Sigma_{u_{\infty}} \coloneqq \{ u_{\infty} = 0 \} \cap \left\{\nabla u_{\infty} = 0 \right\} \cap B_1',
        \]
        we have that
        \begin{equation}\label{eqn:secondAlmgrenBlowUpPositiveMeasure}
        \mathcal{H}^{d-2+\varepsilon}\left( \Sigma_{u_{\infty}} \right) > 0;
        \end{equation}
        \item[4.] let $\nu \coloneqq x_0/\vert x_0 \vert$, then
        \begin{equation}\label{eqn:secondAlmgrenBlowUpInvariant}
        u_{\infty}(x) = u_{\infty}(x + t \nu) \quad \text{in } B_1^+.
        \end{equation}
    \end{itemize}
    The first three properties are clear. Concerning \eqref{eqn:secondAlmgrenBlowUpInvariant}, the following estimate holds for all points $x$ and $x + t \nu \in B_1 \cap \{ x_d \ge 0 \}$: 
    \begin{align}\label{eqn:secondAlmgrenBlowUpEst}
    \begin{split}
        \left\vert u_{\infty}(x) - u_{\infty}(x + t \nu) \right\vert &= \lim_{n \to +\infty} \frac{1}{\int_{\partial B_{r_n}(x_0)} u^2} \left\vert u(x_0 + r_n x) - u(x_0 + r_n x + r_n t \nu) \right\vert \le \\
        & \le \lim_{n \to +\infty} \frac{1}{\int_{\partial B_{r_n}(x_0)} u^2} \left\vert u(x_0 + r_n x) - u(x_0 + r_n \beta_n x + r_n t \nu) \right\vert + \\
        & + \lim_{n \to +\infty} \frac{1}{\int_{\partial B_{r_n}(x_0)} u^2} \left\vert u(x_0 + r_n \beta_n x + r_n t \nu) - u(x_0 + r_n x + r_n t \nu) \right\vert,
    \end{split}
    \end{align}
    where the quantity
    \[
    \beta_n \coloneqq \frac{\left\vert x_0 + r_n t \nu \right\vert}{\left\vert x_0 \right\vert}
    \]
    is such that the vectors
    \[
    x_0 + r_n x \quad \text{and} \quad x_0 + r_n t \nu + r_n \beta_n x
    \]
    are parallel to each other.

    Now, from the positive $l-$homogeneity of the function $u$ and the $C^{1, \alpha}$ convergence to $u_{\infty}$ we have that
    \begin{align}\label{eqn:secondAlmgrenBlowUpHomogeneityEst}
    \begin{split}
    & \frac{1}{\int_{\partial B_{r_n}(x_0)} u^2} \left\vert u(x_0 + r_n x) - u(x_0 + r_n \beta_n x + r_n t \nu) \right\vert = \\
    & = \frac{\left\vert \left\vert x_0 + r_n x\right\vert^{l} - \left\vert x_0 + r_n \beta_n x + r_n t \nu \right\vert^{l} \right\vert}{\left\vert x_0 + r_n x\right\vert^{l}} \frac{\left\vert u(x_0 + r_n x) \right\vert}{\int_{\partial B_{r_n}(x_0)} u^2} \to 0 \quad \text{as } n \to +\infty.
    \end{split}
    \end{align}
    Moreover, since $\beta_n \to 1$ as $n \to +\infty$, once again from the $C^{1, \alpha}$ convergence of $u$ to $u_{\infty}$ we have
    \begin{align}\label{eqn:secondAlmgrenBlowUpBeta_nEst}
    \begin{split}
    & \frac{1}{\int_{\partial B_{r_n}(x_0)} u^2} \left\vert u(x_0 + r_n \beta_n x + r_n t \nu) - u(x_0 + r_n x + r_n t \nu) \right\vert \to 0 \quad \text{as } n \to +\infty.
    \end{split}
    \end{align}
    Combining \eqref{eqn:secondAlmgrenBlowUpEst} with \eqref{eqn:secondAlmgrenBlowUpHomogeneityEst} and \eqref{eqn:secondAlmgrenBlowUpBeta_nEst} we get \eqref{eqn:secondAlmgrenBlowUpInvariant}.

    Now let us introduce the $d-1$ dimensional half ball 
    \[
    \Gamma \coloneqq \{ x \in \R^d : (x - x_0) \cdot \nu = 0 \} \cap B_1^+
    \]
    and the function $v:\Gamma \subset \R^{d-1} \to \R$ defined as the restriction of $u_{\infty}$ to $\Gamma$. In particular, thanks to \eqref{eqn:secondAlmgrenBlowUpInvariant}, the function $v$ satisfies:
    \begin{itemize}
        \item[1.] $v$ is a solution to the harmonic thin obstacle problem in $\Gamma$;

        \item[2.] $v$ is positively $l-$homogeneous, for some $l \in \R_{\ge 3/2}$;

        \item[3.] denoting with $\Sigma_{v} \subset \R^{d-1}$ the set
        \[
         \Sigma_{v} \coloneqq \{ v = 0 \} \cap \left\{\nabla v = 0 \right\} \cap \partial \Gamma \cap B_1',
        \]
        we have that
        \begin{equation*}
        \mathcal{H}^{d-3+\varepsilon}\left( \Sigma_{v} \right) > 0;
        \end{equation*}
    \end{itemize}
    Now, if we iterate the above argument $d-2$ times, we reach a contradiction with point (i). This concludes the proof of point (ii).\qedhere
\end{itemize}
\end{proof}

\appendix

\section{$C^{1, \alpha}$ regularity: proof of \cref{prop:nonlinearThinObstacleC1aRegularity}}\label{appendix:C1aRegularityNonlinearSignorini}
In this appendix we recall the proof to \cref{prop:nonlinearThinObstacleC1aRegularity}. These ideas are already present in the literature (e.g. \cite{RulandShi:C1aThinObstacleBlowUp, DiFazioSpadaro2022:NonlinearThinObstacleC1a}). However, since slight modifications are in order, for the sake of self-containment we prefer to give a little more details concerning the proofs.

We will use the notation
\begin{align}
L(x,w,p)&:=p\cdot M(x)p + (e_d\cdot p)\,w\,\partial_d Q(x)+\sum_{k=1}^3g_k(x,w,p)w^{3-k}\,P_k(p),
\end{align}
for every $x\in\R^d$, $w\in\R$, $p\in\R^d$, where $P_k$ and $g_k$ are the $k$-homogeneous polynomials and smooth functions from \cref{prop:changeofcoord}.\bigskip

From \cref{thm:thinobs} we deduce that a function $w$ which solves \eqref{eqn:thinObstacleRobinRegularity} is also a solution to the variational inequality
\begin{equation}\label{eqn:onePhaseNonlinearThinObstacleVariationalIneq}
    \int_{ B_1^+ } \nabla_p   L\left( x, w, \nabla w \right) \cdot \nabla  \eta + \partial_y L\left( x, w, \nabla w \right)  \eta \ge 0
\end{equation}
for all functions 
\begin{equation}\label{eqn:onePhaseNonlinearThinObstacleAdmissible}
\eta \in H^1(B_1) \text{ with } \quad \eta = 0 \text { on } \partial B_1 \cap \{ x_d \ge 0 \} \quad \text{ and } \quad w + \eta \ge 0 \text{ on } B_1'.
\end{equation}

We begin with the H\"{o}lder continuity for solutions of \eqref{eqn:onePhaseNonlinearThinObstacleVariationalIneq}.
\begin{lemma}[$C^{0, \alpha}$ regularity]\label{lemma:nonlinearThinObstacleC^{0, a}Reg}
     Let the function $w:B_1 \to \R$ be a solution of \eqref{eqn:thinObstacleRobinRegularity} under \cref{assumptions:wC1aAprioriEst}. Then, there exist constants $C = C(d, \delta_0)>0$ and $\alpha = \alpha(d, \delta_0) \in (0, 1)$ such that
     \[
     \| w \|_{C^{0, \alpha}\left( \overline{B_{\sfrac12}^+} \right)} \le C \| w \|_{L^2\left( B_1^+ \right)}.
     \]
\end{lemma}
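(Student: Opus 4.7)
The plan is to recast \eqref{eqn:onePhaseNonlinearThinObstacleVariationalIneq} as a variational inequality for a uniformly elliptic perturbation of the linear Signorini problem with bounded coefficients, then apply the De Giorgi--Nash--Moser iteration adapted to the one-sided obstacle on $B_1'$. The a priori bound $\|w\|_{C^{1,\alpha}}\le\delta\le\delta_0$ from \cref{assumptions:wC1aAprioriEst} plays a double role: it makes the principal part $\nabla_p^2 L(x,w,\nabla w) = 2M(x) + O(\delta_0)$ uniformly elliptic (provided $\delta_0$ is small enough), and it makes the cubic-or-higher perturbations coming from the terms $g_k(x,w,\nabla w)\,w^{3-k}\,P_k(\nabla w)$ absorbable into the leading quadratic form via Young's inequality.

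The first step is to derive one-sided Caccioppoli inequalities. For $k\ge 0$ and a cutoff $\psi\in C_c^\infty(B_1)$, the test function $\eta_+:=-(w-k)^+\psi^2$ satisfies the admissibility condition \eqref{eqn:onePhaseNonlinearThinObstacleAdmissible}: on $B_1'\cap\{w>k\}$ one has $w+\eta_+=(1-\psi^2)w+k\psi^2\ge 0$, using that the obstacle forces $w\ge 0$ on $B_1'$ and that $k\ge 0$. For arbitrary $k\in\R$ the test function $\eta_-:=(k-w)^+\psi^2$ is also admissible: trivially when $k\ge 0$, and vacuously when $k<0$ since $\{w<k\}\cap B_1'=\emptyset$. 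Plugging $\eta_\pm$ into \eqref{eqn:onePhaseNonlinearThinObstacleVariationalIneq}, expanding $\nabla_p L$ and $\partial_y L$ using the explicit form of $L$, and absorbing the lower-order cross terms via Young's inequality and the smallness of $\delta_0$, one obtains
\[
\int_{B_1^+}\bigl|\nabla(w-k)^\pm\bigr|^2\,\psi^2\,dx\le C\int_{B_1^+}\bigl[(w-k)^\pm\bigr]^2\bigl(|\nabla\psi|^2+\psi^2\bigr)\,dx,
\]
valid for $k\ge 0$ in the $+$ case and for any $k\in\R$ in the $-$ case.

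With these estimates, the usual De Giorgi level-set iteration on a sequence of shrinking concentric balls yields the $L^\infty$ bound $\|w\|_{L^\infty(B_{3/4}^+)}\le C\|w\|_{L^2(B_1^+)}$, applied separately to $w^+$ (using only $k\ge 0$) and to $w^-$ (using arbitrary $k$). For the H\"older estimate, at every $x_0\in\overline{B_{1/2}^+}$ and admissible radius $r$ one applies an oscillation dichotomy: at least one of the superlevel set $\{w>\tfrac12(\sup_{B_r^+}w+\inf_{B_r^+}w)\}$ and its complement occupies at least half of $|B_r^+|$, and the corresponding one-sided iteration shrinks either the $\sup$ or the $\inf$ by a fixed fraction. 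This produces $\mathrm{osc}_{B_{r/2}^+(x_0)}(w)\le\theta\,\mathrm{osc}_{B_r^+(x_0)}(w)$ for some universal $\theta=\theta(d,\delta_0)\in(0,1)$, which iterates in the standard fashion to yield the claimed $C^{0,\alpha}$ estimate with $\alpha=-\log_2\theta$.

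The main technical difficulty is to preserve admissibility of the test functions at every step of the iteration, and this is resolved precisely by the one-sidedness of the obstacle: upper truncations are admissible for all $k\ge 0$ and lower truncations for all $k\in\R$. In particular the oscillation decay can be organized so that, at each stage, only legal test functions are used, by using the $L^\infty$ normalization to arrange the relevant truncation levels to lie in the admissible range. This linearization--Caccioppoli--iteration scheme parallels the strategy used in \cite{RulandShi:C1aThinObstacleBlowUp} to prove the $C^{1,\alpha}$ improvement of flatness in the H\"older-coefficient linear Signorini problem.
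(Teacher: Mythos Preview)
Your proposal is correct and follows essentially the same route as the paper: derive one-sided Caccioppoli inequalities by testing \eqref{eqn:onePhaseNonlinearThinObstacleVariationalIneq} with $\eta=-\psi^2(w-k)^+$ (admissible for $k\ge 0$) and $\eta=\psi^2(k-w)^+$ (admissible for all $k$), then run the De~Giorgi--Nash--Moser oscillation decay. The paper simply cites \cite{BeiraoDaVeigaConti1972:ThinObstacleC0a} for the iteration step, whereas you sketch the dichotomy argument and flag the admissibility bookkeeping explicitly; the underlying mechanism is identical.
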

\begin{proof}
We only sketch the proof, following the work \cite{BeiraoDaVeigaConti1972:ThinObstacleC0a}. This lemma is essentially a consequence of the De Giorgi-Nash-Moser iterations. In particular, let
\[
A(k, r, x) \coloneqq \{ w \ge k \} \cap B_r(x) \cap B_1^+, \quad B(k, r, x) \coloneqq \{ w \le k \} \cap B_r(x) \cap B_1^+ \quad \text{ for all } k \in \R.
\]
Once one can prove the Caccioppoli-Leray inequalities on the super/sublevel sets $A(k, r)$ and $B(k, r)$
\begin{align}\label{eqn:nonlinearThinObstacleCaccioppoliLerayIneq}
\begin{split}
& \int_{B(k, r)} \vert \nabla w \vert^2 \le \frac{C}{(R-\rho)^2} \int_{B(k, R)} (w-k)^2 \qquad \text{for all } k \in \R \\
& \int_{A(k, r)} \vert \nabla w \vert^2 \le \frac{C}{(R-\rho)^2} \int_{A(k, R)} (w-k)^2 \qquad \text{for all } k \ge 0
\end{split}
\end{align}
for all radii $0 < r < R$, all points $x \in B_{\sfrac12}$ such that $B_R(x) \Subset B_1$ and a constant $C=C(d, \delta) > 0$, the H\"{o}lder regularity follows for instance from the iterations in \cite[Section 4]{BeiraoDaVeigaConti1972:ThinObstacleC0a}. However, the inequalities \eqref{eqn:nonlinearThinObstacleCaccioppoliLerayIneq} follow in the usual way, testing the variational inequality \eqref{eqn:onePhaseNonlinearThinObstacleVariationalIneq} with (admissible) competitors of the form
\[
\eta = - \varphi (w-k)^- \quad \text{and} \quad \eta = - \varphi (w-k)^+.
\]
where $\varphi$ is a radial non-negative cutoff function with support in $B_{r}(x)$.
\end{proof}
Before stating the $C^{1, \alpha}$ regularity, let us consider the functions
\[
w_r \coloneqq \frac{w(r x)}{r} \quad \text{and} \quad \zeta \coloneqq \frac{w_r}{b}, \text{ with } b \coloneqq \| w_r \|_{L^2(B_1)}.
\]
Then, in terms of the function $\zeta$ we can rewrite explicitly \eqref{eqn:onePhaseNonlinearThinObstacleVariationalIneq} as
\begin{align}\label{eqn:onePhaseNonlinearThinObstacleVariationalIneqLinearized}
\begin{split}
    & \int_{B_1^+} 2 M(r x', r x_d + w(r x) ) \frac{\nabla \zeta}{1+ \partial_d w(rx)} \cdot \nabla \eta\\
    &\qquad- \int_{B_1^+} \left[ \frac{1}{\left( 1 + \partial_d w (rx) \right)^2} M(r x', r x_d+w(rx)) \nabla w(rx) \cdot \nabla \zeta \right] \partial_d \eta \\
    &\qquad\qquad + \int_{B_1^+} \frac{r}{1 + \partial_d w(r x)} \left[\partial_d M \right] \left( r x', r x_d + w(rx) \right) \nabla w (rx) \cdot \nabla \zeta \, \eta\\
    &\qquad\qquad\qquad+ \int_{B_1'} \frac{1}{b} Q(r x', r x_d + r b \zeta) \, \eta \ge 0,
\end{split}
\end{align}
for all functions $\eta \in H^1(B_1)$ with
\begin{equation*}
\eta = 0 \text { on } \partial B_1^+ \quad \text{ and } \quad \zeta + \eta \ge 0 \text{ on } B_1'.
\end{equation*}
Now, the $C^{1, \alpha}$ regularity of solutions can be adapted from the linearization method developed in \cite{RulandShi:C1aThinObstacleBlowUp}. The application of this method will be almost straightforward as, apart from the $C^{0, \alpha}$ regularity in \cref{lemma:nonlinearThinObstacleC^{0, a}Reg}, it essentially only requires to write down the rescaled variational inequality \eqref{eqn:onePhaseNonlinearThinObstacleVariationalIneqLinearized} which, at the free boundary, linearizes to a constant coefficients thin obstacle problem. More precisely, we have the following decay property at the branching points.
\begin{lemma}[$C^{1, \alpha}$ decay at branching points]\label{lemma:nonlinearThinObstacleC^{1, a}Reg}
    Let w be a solution to \eqref{eqn:onePhaseNonlinearThinObstacleVariationalIneq} under \cref{assumptions:wC1aAprioriEst} and assume that the origin is a branching point. Then, there exist constants $\beta=\beta(d, \delta_0) \in (0, \sfrac12)$ and $C = C(d, \delta_0)>0$ such that
    \begin{equation}\label{eqn:nonlinearThinObstacleLocalC1aFreeBd}
    \| w \|_{L^2(B_r^+)} \le C r^{1+\beta} \| w \|_{L^2\left( B_1^+ \right)} \quad \text{ for all } r \in (0, \sfrac12).
    \end{equation}
\end{lemma}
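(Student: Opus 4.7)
The plan is the classical Campanato-type iteration combined with a linearization-by-blow-up argument, in the spirit of \cite{RulandShi:C1aThinObstacleBlowUp}. It suffices to establish a \emph{one-step decay}: there exist $\beta \in (0, \sfrac12)$ and $\lambda \in (0, \sfrac12)$, depending only on $d$ and $\delta_0$, such that every solution $w$ of \eqref{eqn:onePhaseNonlinearThinObstacleVariationalIneq} under \cref{assumptions:wC1aAprioriEst} with the origin a branching point satisfies $\|w\|_{L^2(B_\lambda^+)} \le \lambda^{1+\beta} \|w\|_{L^2(B_1^+)}$. Since the admissible class is scale-invariant under $w \mapsto w(r\cdot)/r$ (the rescaled function is still a solution under \cref{assumptions:wC1aAprioriEst} with the origin a branching point, up to reading off the rescaled coefficients $M, Q$), iterating the one-step decay at all dyadic scales $\lambda^k$ gives the geometric decay $\|w\|_{L^2(B_{\lambda^k}^+)} \le \lambda^{k(1+\beta)}\|w\|_{L^2(B_1^+)}$, and filling in between scales yields \eqref{eqn:nonlinearThinObstacleLocalC1aFreeBd}.

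To prove the one-step decay I argue by contradiction. Suppose there are $\lambda_n \to 0$ and solutions $w_n$ (under the above assumptions, normalized so that $\|w_n\|_{L^2(B_1^+)} = 1$) for which $\|w_n\|_{L^2(B_{\lambda_n}^+)} > \lambda_n^{1+\beta}$. Introduce the Almgren-type rescalings $\zeta_n(x) := w_n(\lambda_n x)/(\lambda_n b_n)$, with $b_n$ chosen so that $\|\zeta_n\|_{L^2(B_1^+)} = 1$. The uniform $C^{0,\alpha}$ estimate of \cref{lemma:nonlinearThinObstacleC^{0, a}Reg}, applied to the $\zeta_n$ (which solve, up to explicit rescaling, the inequality \eqref{eqn:onePhaseNonlinearThinObstacleVariationalIneqLinearized}), provides a uniform equicontinuity modulus, so up to a subsequence $\zeta_n \to \zeta_\infty$ locally uniformly on $B_1^+$ and weakly in $H^1_{\mathrm{loc}}(B_1^+)$, with $\|\zeta_\infty\|_{L^2(B_1^+)} = 1$.

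The heart of the argument is passing to the limit in \eqref{eqn:onePhaseNonlinearThinObstacleVariationalIneqLinearized}. Exploiting \cref{assumptions:startingAssumptions} (so $w_n(0)=0$, $\nabla w_n(0)=0$, $\nabla m(0)=e_d$, hence $M(0)=\mathrm{Id}$) together with \cref{assumptions:wC1aAprioriEst}, I verify term by term that: (i) $M(\lambda_n x', \lambda_n x_d + w_n(\lambda_n x))\to \mathrm{Id}$ uniformly; (ii) $(1+\partial_d w_n(\lambda_n x))^{-1}\to 1$ uniformly; (iii) the bulk quadratic correction in the second line of \eqref{eqn:onePhaseNonlinearThinObstacleVariationalIneqLinearized} carries a factor $|\nabla w_n(\lambda_n x)| \le C\lambda_n^\alpha \to 0$; (iv) the $\partial_d M$ bulk term in the third line carries an explicit prefactor $\lambda_n \to 0$; and (v) the boundary contribution $\frac{1}{b_n}Q(\lambda_n x', \lambda_n b_n \zeta_n)$ on $B_1'$ vanishes uniformly, because $Q(\cdot,0)\equiv 0$ on $B_1'$ (by \cref{prop:CK} and the identity $|\nabla m|^2=1$ on $\partial\Omega_m$), so a first-order Taylor expansion at $x_d=0$ gives a term of size $O(\lambda_n \|\zeta_n\|_{L^\infty})$, independently of $b_n$. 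In the limit, $\zeta_\infty$ solves the classical harmonic Signorini problem on $B_1^+$ with $\zeta_\infty \ge 0$ on $B_1'$, $\zeta_\infty(0) = 0$, $\nabla \zeta_\infty(0) = 0$ (the last two inherited from the branching condition and the uniform $C^{1,\alpha'}$ bounds on $\zeta_n$ provided by the harmonic Signorini theory).

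By the Athanasopoulos-Caffarelli $C^{1,\sfrac12}$ regularity for the Signorini problem, $\zeta_\infty$ has optimal growth $\|\zeta_\infty\|_{L^2(B_r^+)} \le Cr^{\sfrac32}\|\zeta_\infty\|_{L^2(B_1^+)}$ at the branching point $0$; for any fixed $\beta\in(0,\sfrac12)$ this contradicts, for $n$ large, the lower bound on $\|\zeta_n\|_{L^2(B_r^+)}$ coming from the failure of the one-step decay. The main obstacle in this plan is step (v) above: the prefactor $b_n^{-1}$ could in principle blow up, so the uniform smallness of the boundary term relies crucially on the identity $Q|_{B_1'}\equiv 0$ (a consequence of the one-phase Neumann condition $|\nabla m|=1$ on $\partial\Omega_m$) combined with the branching hypothesis; without either, the limit problem would pick up an extra Robin-type term and would not reduce to the harmonic Signorini problem.
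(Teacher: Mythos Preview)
Your one-step-decay-plus-iteration strategy is reasonable and genuinely different from the paper's Ruland--Shi double blow-up, but the argument as written has a real gap at the point you yourself flag as delicate, and the contradiction does not close.

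The central problem is the claim that $\nabla\zeta_\infty(0)=0$. You justify this by ``the uniform $C^{1,\alpha'}$ bounds on $\zeta_n$ provided by the harmonic Signorini theory,'' but the $\zeta_n$ are solutions of the \emph{nonlinear} inequality \eqref{eqn:onePhaseNonlinearThinObstacleVariationalIneqLinearized}, not of the harmonic Signorini problem, so invoking Signorini regularity for them is circular --- it is precisely what the lemma is trying to prove. The only a priori compactness available at this stage is the $C^{0,\alpha}$ estimate of \cref{lemma:nonlinearThinObstacleC^{0, a}Reg}, which controls $\zeta_n$ uniformly in $C^{0,\alpha}$ but says nothing about $\nabla\zeta_n$. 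Although $\nabla\zeta_n(0)=0$ exactly for each $n$, this does not pass to the $C^{0,\alpha}$ limit: one can easily have $\zeta_n\to\zeta_\infty$ in $C^{0,\alpha}$ with $\nabla\zeta_n(0)=0$ and $\zeta_\infty=\gamma x_d$, $\gamma<0$. In that case $\zeta_\infty$ is a perfectly good Signorini solution with only linear decay at the origin, and the Athanasopoulos--Caffarelli $r^{3/2}$ bound you appeal to simply fails. Your final contradiction step is also unclear as stated: after rescaling by $\lambda_n$ and normalizing, the failure of the one-step decay translates into a statement about $b_n$ (equivalently about $\|\zeta_n\|_{L^2(B_1^+)}=1$), not into any lower bound on $\|\zeta_n\|_{L^2(B_r^+)}$ for $r<1$.

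Ruling out the linear limit $\gamma x_d$ is exactly the difficulty the paper's proof is designed to overcome. The paper first blows up along a carefully chosen maximal sequence $r_k$ to obtain a \emph{global} Signorini solution with sub-$(1+\beta)$ growth at infinity, which by a Liouville theorem must be $\gamma x_d$; it then performs a \emph{second} blow-up around nearby points $x_k\in B_1'$ with $w_k(x_k)>0$ --- whose existence uses the branching hypothesis in an essential way --- at a further sequence of scales $s_k$, and shows that the second limit is simultaneously forced to be $c+\gamma' x_d$ with $c=0$, $\gamma'<0$ and yet bounded away from the span of $x_d$ in the normalized $L^2$ sense, yielding the contradiction. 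Your argument uses the branching hypothesis only through $\nabla w_n(0)=0$, which, as explained above, is not enough; some mechanism equivalent to the second blow-up is needed.
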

\begin{proof}
    The proof follows by the linearization method in \cite{RulandShi:C1aThinObstacleBlowUp}. 
    We sketch the main steps of the proof and refer to \cite[Section 2]{RulandShi:C1aThinObstacleBlowUp} for more details. 

    Proceeding by contradiction, one can find a sequence of solutions $u_k$ to \eqref{eqn:onePhaseNonlinearThinObstacleVariationalIneq} with $u_k(0) = 0$ and radii $r_k \to  0$ such that
    \begin{equation}\label{eqn:onePhaseNonlinearThinObstacleC1aContrad}
         \| w_k \|_{L^2(B_{r_k}^+)} \ge k {r_k}^{1+\beta} \| w_k \|_{L^2\left( B_1^+ \right)}.
    \end{equation}
    Moreover, without loss of generality one can assume that
    \begin{equation}\label{eqn:nonlinearThinObstacleLocalC1ar_kChoice}
    r_k \coloneqq \sup \left\{ r \in (0, \sfrac12) : \text{ condition } \eqref{eqn:onePhaseNonlinearThinObstacleC1aContrad} \text{ holds true} \right\}.    
    \end{equation}
    Since \cref{lemma:nonlinearThinObstacleC^{0, a}Reg} implies 
    \begin{equation}\label{eqn:nonlinearThinObstacleLocalC1ar_kto0}
    r_k \to 0^+,
    \end{equation}
    from \eqref{eqn:nonlinearThinObstacleLocalC1ar_kChoice} and \eqref{eqn:nonlinearThinObstacleLocalC1ar_kto0} the $L^2$-normalized functions
    \[
    w_k(x) \coloneqq \frac{u_k(r_k x)}{r_k} \frac{r_k}{\| 
     u_k(r_k x)\|_{L^2(B_1^+)}}
    \]
    converge, thanks to \cref{lemma:nonlinearThinObstacleC^{0, a}Reg} and \eqref{eqn:onePhaseNonlinearThinObstacleVariationalIneqLinearized}, weakly in $H^1(B_r^+)$ and in $C^{0, \alpha}(B_r^+)$ for all $r > 0$, to a (nontrivial) global solution $w_{\infty}$ of the harmonic thin obstacle problem, with $w(0) = 0$. Moreover, from \eqref{eqn:nonlinearThinObstacleLocalC1ar_kChoice} and \cref{lemma:nonlinearThinObstacleC^{0, a}Reg} there exists a constant $C=C(d, \delta_0)>0$ such that
    \begin{equation}\label{eqn:nonlinearThinObstacleBlowUp1Growth}
    \|w_{\infty}\|_{L^\infty(\partial B_r^+)} \le C r^{1 + \beta}\quad\text{for every}\quad r\ge 1.
    \end{equation}
    Combining \eqref{eqn:nonlinearThinObstacleBlowUp1Growth} with the Liouville-type lemma \cite[Lemma 3]{RulandShi:C1aThinObstacleBlowUp} implies that (without loss of generality we can assume even symmetry for $ w_{\infty}$)
    \[
    w_{\infty} = \gamma x_d \text{ for some } \gamma < 0.
    \]
    In particular, the following property holds:
    \[
    \lim_{k \to +\infty} \inf_{\gamma \le 0} \frac{\| w_k - \gamma x_d \|_{L^2\left( B_r^+ \right)}}{\| w_k\|_{L^2\left( B_r^+ \right)}} = 0,
    \]
    for all $r > 0$.

    Now let us consider the sequence
    \[
    \widetilde w_k (x; s_k) \coloneqq \frac{u_k(r_k s_k x + x_k)}{r_k s_k} \frac{r_k s_k}{\| 
     u_k(r_k s_k x+x_k)\|_{L^2(B_1^+)}}
    \]
    for a sequence of points $x_k \in B_1'$ and of real numbers $s_k \in (0, 1)$. Choosing points $x_k \in B_1'$ such that
    \begin{itemize}
        \item[(i)] $w_k(x_k) > 0$,

        \item[(ii)] $\vert x_k \vert^{\alpha} \le \frac{1}{k} r_k^{1+\beta}$,
    \end{itemize}
    which is possible since by assumption the origin is a branching point for the free boundary, one can guarantee that
    \begin{equation}\label{eqn:nonlinearThinObstacleChangeCenter}
    \lim_{k \to +\infty} \inf_{\gamma \le 0} \frac{\| \widetilde w_k (x; 1) - \gamma x_d \|_{L^2\left( B_1^+ \right)}}{\| \widetilde w_k (x; 1)\|_{L^2\left( B_1^+ \right)}} = 0,
    \end{equation}
    thanks to \eqref{eqn:nonlinearThinObstacleLocalC1ar_kChoice}. On the other hand, for each fixed $k \in \N$ and sequence $\delta_l \to 0^+$, 
    \begin{equation}\label{eqn:nonlinearThinObstacleInterior0}
    \lim_{l \to +\infty} \inf_{\gamma \le 0} \frac{\| \widetilde w_k (x; \delta_l ) - \gamma x_d \|_{L^2\left( B_1^+ \right)}}{\| \widetilde w_k (x; \delta_l)\|_{L^2\left( B_1^+ \right)}} = 1
    \end{equation}
    since, thanks to point (i) and elliptic regularity for Robin problems, one has that the functions $\widetilde w_k (x; \delta_l )$ converge in $C^{1, \alpha}(B_r^+)$ for all $r > 0$ to a nontrivial constant function $\widetilde w_{\infty}$. In particular, from \eqref{eqn:nonlinearThinObstacleChangeCenter} and \eqref{eqn:nonlinearThinObstacleInterior0} one has that for each real number $\mu \in (0, 1)$ there exists a sequence $s_k \in (0, 1)$ such that
    \begin{equation}\label{eqn:nonlinearThinObstacleInterior}
    \lim_{k \to +\infty} \inf_{\gamma \le 0} \frac{\| \widetilde w_k (x; s_k ) - \gamma x_d \|_{L^2\left( B_1^+ \right)}}{\| \widetilde w_k (x; s_k)\|_{L^2\left( B_1^+ \right)}} = \mu.
    \end{equation}
    Let us choose the sequence $s_k$ as
    \begin{equation}\label{eqn:nonlinearThinObstacleLocalC1as_kChoice}
    s_k \coloneqq \sup \left\{ s \in (0, 1) : \inf_{\gamma \le 0} \frac{\| \widetilde w_k (x; s_k ) - \gamma x_d \|_{L^2\left( B_1^+ \right)}}{\| \widetilde w_k (x; s_k)\|_{L^2\left( B_1^+ \right)}} \le \mu \text{ for all } s \ge s_k \right\}.    
    \end{equation}
    so that \eqref{eqn:nonlinearThinObstacleInterior} still holds true. Now one can show that
    \begin{equation}\label{eqn:nonlinearThinObstacleLocalC1as_kTo0}
    s_k \to 0^+ \text{ as } k \to +\infty.
    \end{equation}
    Indeed, suppose by contradiction that \eqref{eqn:nonlinearThinObstacleLocalC1as_kTo0} fails, so that $s_k \to s_0 > 0$. Since 
    \[
    \widetilde w_k (x; s_k ) = w_k \left( s_k x + \frac{x_k}{r_k} \right) \frac{ \| 
     u_k(r_k x)\|_{L^2(B_1^+)} }{\| 
     u_k(r_k s_k x+x_k)\|_{L^2(B_1^+)}},
    \]
    thanks to point (ii) one has that
    \[
     \widetilde w_k (x; s_k ) \to w_{\infty}(s_0 x) = \gamma s_0 x_d,
    \]
    weakly in $H^1(B_r^+)$ and in $C^{0, \alpha}(B_r^+)$ for all $r > 0$ and
    \[
    \frac{ \| 
    u_k(r_k x)\|_{L^2(B_1^+)} }{\| 
    u_k(r_k s_k x+x_k)\|_{L^2(B_1^+)}} \to \frac{1}{\| 
    w_{\infty}(s_0 x)\|_{L^2(B_1^+)}} > 0.
    \]
    This, however, is in contradiction with \eqref{eqn:nonlinearThinObstacleInterior}, so that \eqref{eqn:nonlinearThinObstacleLocalC1as_kTo0} holds true.

    Now, from the choice \eqref{eqn:nonlinearThinObstacleLocalC1as_kChoice} and the proof of \cite[Lemma 2.7]{RulandShi:C1aThinObstacleBlowUp}, choosing $\mu>0$ sufficiently small, one has that
    \begin{equation}\label{eqn:nonlinearThinObstacleLocalC1as_kGlobal}
    \| \widetilde w_k(x; s_k)\|_{L^2(B_R^+)} \le C R^{1+\beta} \text{ for all } R \in (2, \sfrac{s_k}{2}).
    \end{equation}
    for some constant $C = C(d, \delta)>0$. In particular, combing \eqref{eqn:nonlinearThinObstacleLocalC1as_kGlobal} with \cref{lemma:nonlinearThinObstacleC^{0, a}Reg}, \eqref{eqn:nonlinearThinObstacleLocalC1as_kTo0} and \eqref{eqn:onePhaseNonlinearThinObstacleVariationalIneqLinearized} one has that
    \[
    \widetilde w_k(x; s_k) \to \widetilde w_{\infty}
    \]
    weakly in $H^1(B_r^+)$ and in $C^{0, \alpha}(B_r^+)$ for all $r > 0$, to a nontrivial global solution $\widetilde w_{\infty}$ of the harmonic thin obstacle problem, with the estimate
    \[
    \| \widetilde w_{\infty}\|_{L^{\infty}(\partial B_R^+)} \le C R^{1+\beta}  \text{ for all } R \ge 2.
    \]
    Hence, using once again \cite[Lemma 3]{RulandShi:C1aThinObstacleBlowUp}, assuming without loss of generality even symmetry for $ w_{\infty}$, one can deduce that
    \[
    \widetilde w_{\infty} = c + \gamma' x_d \text{ for some constants } c \in \R \text{ and } \gamma' \le 0.
    \]
    The aim now is to show that,
    \begin{equation}\label{eqn:nonlinearThinObstacleLocalC1Contrad}
    c = 0 \text{ and } \gamma' < 0.
    \end{equation}
    First, one can show by contradiction that
    \begin{equation}\label{eqn:nonlinearThinObstacleLocalC1DistBRanching}
    r_k s_k > \dist\left( x_k, \partial \{ u_k = 0 \} \cap B_1' \right).
    \end{equation}
    Indeed, if \eqref{eqn:nonlinearThinObstacleLocalC1DistBRanching} were false, by elliptic regularity for Robin problems $\gamma' = 0$ and consequently $c > 0$. This, however, would contradict that $\mu < 1$ in \eqref{eqn:nonlinearThinObstacleInterior}, so that \eqref{eqn:nonlinearThinObstacleLocalC1DistBRanching} holds true.  Consequencly, there exists $x' \in B_1'$ such that $\widetilde w_{\infty}(x') = 0$, which implies $c = 0$. Since $\widetilde w_{\infty}$ is nontrivial, one also has $\gamma' < 0$. This proves \eqref{eqn:nonlinearThinObstacleLocalC1Contrad}.
    
    Finally, conditions \eqref{eqn:nonlinearThinObstacleLocalC1Contrad} are in contradiction with the fact that $\mu > 0$ in \eqref{eqn:nonlinearThinObstacleInterior}, so that the proof is concluded.
\end{proof}
We are ready for the
\begin{proof}[Proof of \cref{prop:nonlinearThinObstacleC1aRegularity}.]
    Combining \cref{lemma:nonlinearThinObstacleC^{0, a}Reg}, \cref{lemma:nonlinearThinObstacleC^{1, a}Reg} together with interior elliptic regularity and elliptic regularity for Robin problems, \cref{prop:nonlinearThinObstacleC1aRegularity} follows via a standard projection argument. We refer for instance to \cite{RulandShi:C1aThinObstacleBlowUp} and the references therein.
\end{proof}

\section*{Acknowledgements} LS acknowledges the support of the NSF Career Grant DMS 2044954. LF and BV are supported by the European Research Council (ERC), under the European Union's Horizon 2020 research and innovation program, through the project ERC VAREG - {\em Variational approach to the regularity of the free boundaries} (No.\,853404). LF and BV acknowledge the MIUR Excellence Department Project awarded to the Department of Mathematics, University of Pisa, CUP I57G22000700001. LF is a member of INdAM-GNAMPA. BV acknowledges support from the projects PRA 2022 14 GeoDom (PRA 2022 - Università di Pisa) and MUR-PRIN ``NO3'' (No. 2022R537CS). The authors would like to thank Camillo De Lellis and Guido De Philippis for the stimulating discussions on the boundary Almgren's frequency function for area-minimizing currents.


\bibliographystyle{plain}
\bibliography{FreeBoundary_bib.bib}


\medskip
\small
\begin{flushright}
\noindent 
\verb"lorenzo.ferreri@sns.it"\\
Classe di Scienze, Scuola Normale Superiore\\ 
piazza dei Cavalieri 7, 56126 Pisa (Italy)
\end{flushright}

\begin{flushright}
\noindent 
\verb"lspolaor@ucsd.edu"\\
Department of Mathematics, UC San Diego,\\
 AP\&M, La Jolla, California, 92093, USA
\end{flushright}

\begin{flushright}
\noindent 
\verb"bozhidar.velichkov@unipi.it"\\
Dipartimento di Matematica, Università di Pisa\\ 
largo Bruno Pontecorvo 5, 56127 Pisa (Italy)
\end{flushright}

\end{document}